\newcounter{1}
\newtheorem{thm2}{Theorem}
\newtheorem{conj2}{Conjecture}
\newtheorem{thm}{Theorem}[section]
\newtheorem{cor}[thm]{Corollary}
\newtheorem{lem}[thm]{Lemma}
\newtheorem{prop}[thm]{Proposition}
\newtheorem{prop2}{Proposition}
\newtheorem{conj}[thm]{Conjecture}
\theoremstyle{definition}
\newtheorem{defn}[thm]{Definition}
\newtheorem{rmk}[thm]{Remark}
 \DeclareMathOperator{\Spec}{Spec}
\DeclareMathOperator{\End}{End} 
\DeclareMathOperator{\Hom}{Hom} 
 \DeclareMathOperator{\Sym}{Sym}
\newcommand{\C}{\ensuremath\mathds{C}}
\newcommand{\Z}{\ensuremath\mathds{Z}}
\newcommand{\Q}{\ensuremath\mathds{Q}}
\newcommand{\PP}{\ensuremath\mathds{P}}
\newcommand{\HH}{\ensuremath\mathrm{H}}
\newcommand{\NN}{\ensuremath\mathrm{N}}
\newcommand{\tr}{\ensuremath\mathrm{tr}}
\newcommand{\CH}{\ensuremath\mathrm{CH}}
\newcommand{\h}{\ensuremath\mathfrak{h}}
\newcommand{\ho}{\ensuremath\mathfrak{h}^\circ}
	\def\MR#1{}
\begin{document}

	\title{On the birational motive of hyper-K\"ahler varieties}
	\author{Charles Vial}
	
	\thanks{2010 {\em Mathematics Subject Classification.} 14C15, 14C25, 14J42, 14K99, 14E05}
	
	\thanks{{\em Key words and phrases.} Algebraic cycles, hyper-K\"ahler varieties, moduli spaces of sheaves on K3 surfaces, birational motives, abelian varieties, co-algebras}
	
	\address{Universit\"at Bielefeld, Germany}
	\email{vial@math.uni-bielefeld.de}
	
	
	\begin{abstract} 
	We introduce a new ascending filtration, that we call the co-radical filtration in analogy with the basic theory of co-algebras, on the Chow groups of pointed smooth projective varieties. In the case of zero-cycles on projective hyper-K\"ahler manifolds, we conjecture it agrees with a filtration introduced by Voisin. This is established for moduli spaces of stable objects on K3 surfaces, for generalized Kummer varieties and for the Fano variety of lines on a smooth cubic fourfold. Our overall strategy is to view the birational motive of a smooth projective variety as a co-algebra object with respect to the diagonal embedding and to show in the aforementioned cases the existence of a so-called strict grading whose associated filtration agrees with the filtration of Voisin.
    As results of independent interest, we upgrade to rational equivalence Voisin's notion of ``surface decomposition'' and show that the birational motive of some projective hyper-K\"ahler manifolds is determined, as a co-algebra object, by the birational motive of a surface. We  also relate our co-radical filtration on the Chow groups of abelian varieties to Beauville's eigenspace decomposition.\smallskip
    
\noindent \textsc{R\'esum\'e.} Nous d\'efinissons une filtration croissante, que nous appelons la filtration co-radicale en analogie avec la th\'eorie \'el\'ementaire des co-alg\`ebres, sur les groupes de Chow des vari\'et\'es projectives lisses point\'ees. Dans le cas des z\'ero-cycles sur les vari\'et\'es projectives hyper-K\"ahl\'eriennes, nous conjecturons qu'elle co\"incide avec une filtration introduite par Voisin. Ceci est \'etabli pour les espaces de modules d'objets stables sur les surfaces K3, pour les vari\'et\'es de Kummer g\'en\'eralis\'ees et pour la vari\'et\'e de Fano des droites sur une cubique lisse de dimension 4. Notre strat\'egie consiste \`a consid\'erer la structure naturelle de co-alg\`ebre sur le motif birationnel d'une vari\'et\'e projective lisse  et \`a montrer dans les cas susmentionn\'es l'existence d'un ``scindage strict'' dont la filtration associ\'ee co\"incide avec la filtration de Voisin.
Comme d\'eveloppements connexes,
nous formulons une version motivique de la notion cohomologique de "surface decomposition" de Voisin 
et montrons que le motif birationnel de certaines vari\'et\'es projectives hyper-K\"ahl\'eriennes est d\'etermin\'e, en tant que co-alg\`ebre, par le motif birationnel d'une surface. Nous relions \'egalement notre filtration co-radicale sur les groupes de Chow des vari\'et\'es ab\'eliennes \`a la d\'ecomposition de Beauville.
	\end{abstract}
	
	\maketitle
	
	\vspace{-10pt}
	\section*{Introduction}
	
	For a smooth projective variety $X$ over an algebraically closed field $K$, we denote $$\delta: X \hookrightarrow X\times X$$ the diagonal embedding and we fix a zero-cycle $o \in \CH_0(X)$  (necessarily of degree 1) such that $\delta_*o = o\times o$\,; for instance $o$ is the class of a closed point.
	For $\alpha\in \CH_i(X)$  an $i$-dimensional  cycle class, we denote $\alpha^{\times N} := p_1^*\alpha \cdot \cdots \cdot p_N^*\alpha \in \CH_{iN}(X)$ the $N$-th exterior power of $\alpha$. 
	Recall from~\cite{Voevodsky,Voisin-smash}  that if $\alpha$ is algebraically trivial (\emph{e.g.}, $\alpha$ is a 0-cycle of degree zero), then $\alpha$ is \emph{smash-nilpotent}, \emph{i.e.}, satisfies $\alpha^{\times N} = 0$ in the Chow group with rational coefficients $\CH_{iN}(X^N)$ for $N$ large enough. 
	The following cycle will play a prominent role in this work\,:
\begin{equation}\label{E:it-red-comult}
	\bar \delta^{n-1}:= \prod_{i=1}^{n} p_{0,i}^*(\Delta_X - X\times o)\ \in\CH_{\dim X}(X\times X^{n}).
\end{equation}
Here, $\Delta_X\in \CH_{\dim X}(X\times X)$ is the class of the diagonal and $p_{0,i} : X\times X^{n} \to X\times X$ are the projectors on the product of the first and $(i+1)$-st factors.
Such a cycle $\bar \delta^{n-1}$ already appears in \cite{VoisinGT} and is shown \cite[Cor.~1.6 \& Prop.~2.2]{VoisinGT} to vanish for $n$ large enough.
Its projection to $X^n$ is the so-called $n$-th modified diagonal cycle and, for any closed point $x\in X$, we have the relation (Proposition~\ref{P:RT})
\begin{equation}\label{E:co-rad-smash}
\bar{\delta}^{n-1}_*[x] = ([x]-o)^{\times n} \ \ \mbox{in } \CH_0(X^n).
\end{equation}
Inspired by the basic formalism of (graded) co-algebras, we call $\bar \delta:= \bar \delta^1$ the \emph{reduced co-multiplication cycle} and $\bar \delta^{n-1}$ the \emph{iterated reduced co-multiplication cycle}\,; it satisfies the formula
$$\bar \delta^n = (\bar \delta \otimes \Delta_X \otimes \cdots \otimes \Delta_X) \circ \bar \delta^{n-1}.$$
	By the above, we obtain an exhaustive ascending filtration (that depends on the choice of $o$)
		$$R_k\CH_0(X) := \ker \big(\bar{\delta}^k_*  : \CH_0(X) \to \CH_0(X^{k+1}) \big) \quad \mbox{for } k\geq0,$$
that we call
the \emph{co-radical filtration} (Definition~\ref{D:co-rad}). The aim of this work is to give evidence, in case $X$ is  a hyper-K\"ahler variety equipped with its (conjectural) Beauville--Voisin $0$-cycle, that this filtration is meaningful both from a motivic and geometric point of view.

	\subsection{On zero-cycles on hyper-K\"ahler varieties}
	By \emph{hyper-K\"ahler variety}, we will mean a  projective hyper-K\"ahler manifold, or equivalently, a projective irreducible holomorphic symplectic manifold.

	\subsubsection{Main result}---
	Given a hyper-K\"ahler variety $X$ of dimension $2n$,  the \emph{Voisin filtration} $S_k\CH_0(X)$ is defined as the subgroup spanned by classes of points supported on a closed subvariety $Z_k \subset X$ of dimension $\geq n-k$ all of whose points are rationally equivalent in~$X$ (see \S \ref{SS:VoisinFil} for more details). 
	We give a new characterization of Voisin's filtration~$S_\bullet$, introduced in~\cite{Voisin-coisotropic}, in certain cases\,:

\begin{thm2}[Theorem~\ref{thm:splitting} and Remark~\ref{R:moduli}]
	\label{T:Voisin-filt}
	Let $X$ be a hyper-K\"ahler variety and denote $2n$ its dimension. Assume that $X$ is one of the following\,:
\begin{enumerate}[(i)]
	\item \label{hilb} $\mathrm{Hilb}^n(S)$, the Hilbert scheme of length-$n$ closed subschemes on a K3 surface $S$~\cite{Beauville-Chern}\,;
	\item \label{moduli} $\mathrm{M}_\sigma(v)$, a moduli space of stable objects on a K3 surface\,;
	\item \label{kum} $K_n(A)$, the generalized Kummer variety associated to an abelian surface $A$~\cite{Beauville-Chern}\,;
	\item \label{fano} $F(Y)$, the Fano variety of lines on a smooth cubic fourfold $Y$~\cite{BeauvilleDonagi}\,;
	\setcounter{1}{\value{enumi}}
\end{enumerate}
 Then there exists a point $o\in X$ such that, for all $k\geq 0$ and for all $x\in X$, 
	\begin{equation}\label{E:equivalence}
[x]\in S_k\CH_0(X) \ \iff \ ([x]-[o])^{\times k+1} = 0 \ \mbox{in}\ \CH_0(X^{k+1}),
	\end{equation}
	or equivalently, such that 
	$$S_k\CH_0(X) = R_k\CH_0(X)$$
	for all $k\geq 0$.
\end{thm2}

 Here, moduli spaces of stable objects on a K3 surface $S$ are understood as moduli spaces of objects in the derived category of coherent sheaves on $S$ with given Mukai vector $v$ that are stable with respect to a $v$-generic Bridgeland stability condition~$\sigma$\,; see \S \ref{S:moduli}. Note  that case \eqref{hilb} is a special case of~\eqref{moduli}\,; it will however be convenient for our exposition to distinguish them.
 
Since the first three families of hyper-K\"ahler varieties \eqref{hilb}-\eqref{moduli}-\eqref{kum} are dense in moduli~\cite{MM-dense}, and since the latter \eqref{fano} forms a locally complete family in moduli, we are led to formulate\,:

\begin{conj2}\label{conj2:Voisin-split-cogen}
	Let $X$ be a hyper-K\"ahler variety. Then there exists a point $o\in X$ such that
$$[x]\in S_k\CH_0(X) \ \iff \ ([x]-[o])^{\times k+1} = 0 \ \mbox{in}\ \CH_0(X^{k+1}),$$
 for all $k\geq 0$.
\end{conj2}

\subsubsection{Some consequences}---
In the special case $k=0$, Conjecture~\ref{conj2:Voisin-split-cogen} matches the expectation that $S_0\CH_0(X)$ should be spanned by the class of a point $o\in X$. When this is the case we call $o$ a \emph{Beauville--Voisin point} of $X$ and its class $[o] \in \CH_0(X)$ the Beauville--Voisin class. The existence of such a Beauville--Voisin point was already established in cases  \eqref{hilb}, \eqref{moduli}, \eqref{kum}, \eqref{fano}\,; see \cite{Voisin-coisotropic}, and \cite{MZ, SYZ, Li-Zhang} for case \eqref{moduli}.

In the special case $k=1$ and $X=F(Y)$ is the Fano variety of lines on a smooth cubic fourfold, we get thanks to the description \cite[Thm.~1.5]{CMP} of $S_1\CH_0(F(Y))$ that a point $x\in F(Y)$ satisfies $([x]-[o])^{\times 2} = 0$ in $\CH_0(F(Y)^2)$ if and only if $x$ is rationally equivalent to a 0-cycle supported on a uniruled divisor.

In the special case $k=n+1$, since by definition $S_{n+1}\CH_0(X) = 0$, Conjecture~\ref{conj2:Voisin-split-cogen} predicts that $([x]-[o])^{\times n+1} = 0 \ \mbox{in}\ \CH_0(X^{n+1})$ for all points $x\in X$. 
This bound is optimal in the sense that if $X$ is a hyper-K\"ahler variety of dimension $2n$, then for all points $o\in X$ there exists a point $x\in X$ such that $([x]-[o])^{\times n} \neq 0 \ \mbox{in}\ \CH_0(X^{n})$.
Observe indeed that, if $\sigma$ is a non-zero symplectic form on $X$, we have 
$(\Delta_X - X\times [o])^*\sigma =  \sigma$, so that $(\bar \delta^{n-1})^*(\sigma\otimes \cdots \otimes \sigma) = \sigma^n \neq 0$ in $\HH^{0}(X,\Omega^{2n}_X)$.
 If now $\bar{\delta}^{n-1}_*[x] = ([x]-[o])^{\times n} =0$ for all $x$, then by Bloch--Srinivas~\cite{BS} $\bar{\delta}^{n-1}$ is supported on $D\times X^n$ for some divisor $D$ in $X$ and so $\sigma^n = 0$, which is a contradiction. 
 
Unconditionally, we have\,:

\begin{prop2}
	\label{P:smash-hyperK}
	Let $X$ be a hyper-K\"ahler variety and denote $2n$ its dimension. Assume that $X$ is one of \eqref{hilb}, \eqref{moduli}, \eqref{kum}, \eqref{fano} or one of the following\,:
	\begin{enumerate}[(i)]
		\setcounter{enumi}{\value{1}}
		\item \label{llsvs} $Z(Y)$, a LLSvS eightfold~\cite{LLSS}\,;
		\item \label{lagrangian} a Lagrangian fibration.
	\end{enumerate}
	If  $[o]\in \CH_0(X)$ denotes the Beauville--Voisin class, then we have 
	$$([x]-[o])^{\times n+1} = 0 \ \mbox{in}\ \CH_0(X^{n+1}),$$
	for all points $x\in X$. In particular, for any points $p_i \in X$ and any $a_i \in \Q$, $i=1,\ldots, l$, we have $$\Big(\sum_{i=1}^l a_i([p_i]-[o])\Big)^{ln+1} = 0 \ \mbox{in}\ \CH_0(X^{ln+1})$$
\end{prop2}

Proposition~\ref{P:smash-hyperK} in cases \eqref{hilb}, \eqref{moduli}, \eqref{kum}, \eqref{fano} is a special instance of Theorem~\ref{T:Voisin-filt}.
In \S \ref{SS:proof1} we provide a proof in those cases, as well as in case~\eqref{llsvs}, by showing more generally that if
 the birational motive of $X$ admits a \emph{unital grading} (see \S \ref{SSS:gradings} below), then $X$ satisfies the conclusion of the proposition (this does not involve the Voisin filtration).
From a more geometric perspective, Proposition~\ref{P:smash-hyperK} holds for any hyper-K\"ahler variety $X$ of dimension $2n$ with a point $o$, that is generically covered by $n$-dimensional abelian varieties all supporting a point rationally equivalent to $o$ in $X$, 
\emph{i.e.}, for hyper-K\"ahler varieties $X$ with a point $o$, with the property that there exists a non-empty open subset $V\subseteq X$ such that for all $x\in V$ there exists an abelian variety $A$ of dimension $n$, a non-empty open subset $U\subseteq A$ and a map $f: U \to X$ with $f(U)$ containing both $x$ and a  point rationally equivalent to $o$ in $X$
(this property is known to hold for hyper-K\"ahler varieties of type \eqref{hilb}, \eqref{fano}, \eqref{llsvs}, and \eqref{lagrangian} by Lin~\cite{Lin-lag}). 
 Indeed, if $x$ is a point on $X$, then $x$ is rationally equivalent to a 0-cycle supported on $V$ and this follows from the observation,  coming from \eqref{E:co-rad-smash}, that
\begin{equation}\label{E:multiple}
[x] = \sum_ia_i[x_i]  \in \CH_0(X) \ \Longrightarrow \ ([x]-[o])^{\times k} = \sum_i a_i ([x_i]-[o])^{\times k} \in \CH_0(X^k)
\end{equation}
and from the following analogous result for abelian varieties\,:

\begin{prop2}[Theorem~\ref{thm:abelian}(d)]\label{P:smash-abelian}
	Let $A$ be an abelian variety of dimension $g$ over a field $K$.
	Then  the following exterior power vanishes
	$$([x]-[0])^{\times g+1} = 0 \ \mbox{in}\ \CH_0(A^{g+1}),$$
	for all $K$-points $x\in A(K)$.
\end{prop2}

Since we could not find a reference for Proposition~\ref{P:smash-abelian}, we provide a proof in \S \ref{SS:abelian}
using the fact, obtained by dualizing K\"unneman's Theorem~\ref{T:K} below,  that the (covariant) Chow motive of $X$ admits a unital grading with unit $[0]$.

\subsubsection{The case \eqref{moduli} of moduli of stable objects on K3 surfaces}\label{SSS:moduli}---
Let $S$ be a K3 surface and let $\operatorname{M}_\sigma(v)$ be a moduli space of stable objects on $S$. Inspired by seminal work of O'Grady~\cite{OG-moduli}, 
 Shen--Yin--Zhao~\cite{SYZ} introduced the following ascending filtration on $\CH_0(\operatorname{M}_\sigma(v))$\,:
\begin{equation}\label{E:SYZ-def}
S^{\mathrm{SYZ}}_k\CH_0(\operatorname{M}_\sigma(v)) := \langle \, [\mathcal E]  ~\big\vert~ \mathcal E \in \operatorname{M}_\sigma(v) \mbox{ such that } c_2(\mathcal E) \in S_k^{\mathrm{OG}}(S) \, \rangle,
\end{equation}
where $S_k^{\mathrm{OG}}(S):= \{[x_1]+\cdots+ [x_k]+ \Z [o_S]\in \CH_0(S) ~\big\vert~ x_i\in S \}$ is O'Grady's ascending filtration~\cite{OG-moduli} on $\CH_0(S)$ and where $o_S$ is the Beauville--Voisin point on $S$~\cite{BV}. As noted in \cite[p.~182]{SYZ}, the main result of \cite{MZ} provides a degree-1 zero-cycle $o\in \CH_0(\operatorname{M}_\sigma(v))$ such that $S^{\mathrm{SYZ}}_0\CH_0(\operatorname{M}_\sigma(v))$ is spanned by $o$.

Theorem~\ref{T:Voisin-filt} in case \eqref{moduli} is obtained by combining the recent result of Li--Zhang~\cite[Thm.~1.1]{Li-Zhang} establishing $S^{\mathrm{SYZ}}_\bullet\CH_0(\operatorname{M}_\sigma(v)) = S_\bullet \CH_0(\operatorname{M}_\sigma(v))$ with the following\,:

\begin{thm2}[Theorem~\ref{thm:splitting-moduli}]
	\label{thm2:splitting-moduli}
	Given an element $\mathcal E$ of $\operatorname{M}_\sigma(v)$ we have
\begin{align}
[\mathcal E]\in S^{\mathrm{SYZ}}_k\CH_0(\operatorname{M}_\sigma(v)) & \iff  ([\mathcal E]-[o])^{\times k+1} = 0 \ \mbox{in}\ \CH_0(\operatorname{M}_\sigma(v)^{k+1}),    \label{E:equivalence-SYZ}  
\end{align}
or, equivalently,
$$S^{\mathrm{SYZ}}_k\CH_0(X) = R_k\CH_0(X)$$
for all $k\geq 0$.
\end{thm2}
By \cite{BFMS} we further have the equivalence  $([\mathcal E]-[o])^{\times k+1} = 0 \iff  (c_2(\mathcal E)- \lambda[o_S])^{\times k+1} = 0 $, where $\lambda:= \deg(c_2(\mathcal E) )$.
As is noted in \cite[\S 5]{OG-moduli}, if $\mathcal E \in \operatorname{M}_\sigma(v)$ is such that $c_2(\mathcal E) \in S_k^{\mathrm{OG}}(S)$, then $c_2(\mathcal E) - \lambda[o_S]$ is represented by a $0$-cycle of degree 0 on a curve of geometric genus $k$ and hence $(c_2(\mathcal E)- \lambda[o_S])^{\times k+1} = 0 \ \mbox{in}\ \CH_0(S^{k+1})$ by the nilpotence result of Voevodsky--Voisin~\cite{Voevodsky, Voisin-smash}. 
Alternately, this can be deduced from Proposition~\ref{P:smash-hyperK} for $S$\,; indeed, if $c_2(\mathcal E) - \lambda [o_S] = [x_1] +\cdots + [x_k] - k[o_S]$, then one deduces directly  from $([x_i] - [o_S])^{\times 2}=0$ that  $(c_2(\mathcal E)- \lambda[o_S])^{\times k+1} = 0$.
One concludes from~\eqref{E:multiple} that $[\mathcal E]\in S^{\mathrm{SYZ}}_k\CH_0(\operatorname{M}_\sigma(v)) $ implies $ (c_2(\mathcal E)- \lambda[o_S])^{\times k+1} = 0 $. 
Our main contribution in case \eqref{moduli} is thus the implication $([\mathcal E]-[o])^{\times k+1} = 0
\Longrightarrow [\mathcal E]\in S^{\mathrm{SYZ}}_k\CH_0(\operatorname{M}_\sigma(v)) $\,; this is proved in Theorem~\ref{thm:splitting-moduli}.
Theorem~\ref{thm2:splitting-moduli} leaves however open the question \cite[Ques.~3.2]{SYZ} of whether $[\mathcal E]\in S^{\mathrm{SYZ}}_k\CH_0(\operatorname{M}_\sigma(v))  \Longrightarrow c_2(\mathcal E) \in S_k^{\mathrm{OG}}(S)$.

\subsubsection{The strategy of proof}---
We start by outlining the geometric content of the proof of Theorem~\ref{T:Voisin-filt} in case \eqref{fano}. We write $F$ for the Fano variety of lines on the smooth cubic fourfold $Y$.
 Theorem~\ref{T:Voisin-filt} in case $k=0$ is equivalent to the fact that $S_0\CH_0(F)$ is spanned by the Beauville--Voisin class~$[o]$\,; this is due to Voisin. 
  Denote now $\varphi : F \dashrightarrow F$ Voisin's rational map \cite{Voisin-Intrinsic} which is defined as follows\,: for a general line $l$ in $Y$ there is a unique plane $\Pi_l$ in $\PP^5$ tangent to $Y$ along $l$ and not contained in $Y$ and one sets $\varphi(l) = l'$ with $l'$ the line such that $Y\cap \Pi_l = 2l +l'$. 
  The Voisin filtration on $\CH_0(F)$ admits a splitting $$\CH_0(F) =  \CH_0(F)_{(0)} \oplus \CH_0(F)_{(1)} \oplus \CH_0(F)_{(2)}, \
\emph{i.e.}, S_k\CH_0(F) = \oplus_{i\leq k} \CH_0(F)_{(i)},$$
 with $\CH_0(F)_{(i)}$ the eigenspace for the action of $\varphi_*$ with eigenvalue $(-2)^i$\,; see \cite[\S 4.2]{Voisin-coisotropic} and \cite[Thm.~21.9]{SV}. Since we are working over an algebraically closed field, the exterior product map $ \CH_0(F) \otimes \CH_0(F) \longrightarrow \CH_0(F\times F) $ is surjective (see~\S \ref{SS:0cycle}).
Denoting $\delta : F \to F\times F$ the diagonal embedding, the basic observation is that, since  $\delta\circ \varphi = (\varphi \times \varphi)\circ \delta$ and since $(\varphi \times \varphi)_*$ acts as multiplication by $(-2)^{i+j}$ on the image of $\CH_0(F)_{(i)} \otimes \CH_0(F)_{(j)} \to \CH_0(F\times F)$, we have 
\begin{equation}\label{E:formalization}
\delta_*\CH_0(F)_{(k)} \subseteq \operatorname{im}\Big(\bigoplus_{i+j=k} \CH_0(F)_{(i)} \otimes \CH_0(F)_{(j)} \longrightarrow \CH_0(F\times F) \Big).
\end{equation}
Using the fact that $\CH_0(F)_{(0)}$ is spanned by the Beauville--Voisin class $[o]$, we get by projecting on both factors
\begin{equation}\label{E:comult-CH}
\bar\delta_*\CH_0(F)_{(k)} \subseteq \operatorname{im}\Big(\bigoplus_{i+j=k, i,j>0} \CH_0(F)_{(i)} \otimes \CH_0(F)_{(j)} \longrightarrow \CH_0(F\times F) \Big).
\end{equation}

Let us now investigate the case $k=1$. Let $x\in F$ be a point such that $[x]\in S_1\CH_0(F)$. Then $[x]-[o]$ lies in $\CH_0(F)_{(1)}$. From \eqref{E:comult-CH} we immediately get that $\bar \delta^1_*[x] = ([x]-[o])^{\times 2} = 0$ and hence the inclusion $S_1\CH_0(F) \subseteq \ker (\bar \delta^1_* : \CH_0(F) \to \CH_0(F\times F))$.
For the converse inclusion, let $x$ be a point on $F$ such that $[x] \notin S_1\CH_0(F)$. We write $[x] = [x]_{(0)} + [x]_{(1)} + [x]_{(2)}$ for the eigenspace decomposition of $[x]$. By assumption, we have $[x]_{(2)}\neq 0$ and we want to show that $\bar \delta^1_*[x] = ([x]-[o])^{\times 2} \neq 0$. Since $\bar \delta^1_*([x]_{(i)}) = 0$ for $i=0,1$ by the above, we have to show that  $\bar \delta^1_*([x]_{(2)}) \neq 0$. For that purpose, it suffices to show that $\bar\delta^1_*$ is injective when restricted to $\CH_0(F)_{(2)}$. This is established in the proof of Theorem~\ref{thm:cogeneration} in case \eqref{fano}, by showing the existence of a correspondence $\mu \in \CH^4(F^2\times F)$ such that $\mu \circ \bar{\delta}^1$ acts by multiplication by $2$ on $\CH_0(F)_{(4)}$.

Finally in case $k=2$,  $\bar \delta^1_*(\CH_0(F)_{(2)})$  lies in the image of $\CH_0(F)_{(1)} \otimes \CH_0(F)_{(1)} \to \CH_0(F\times F)$, due to \eqref{E:comult-CH}. Since $\bar \delta^1$ acts as zero on both $\CH_0(F)_{(0)}$ and  $\CH_0(F)_{(1)}$, we see that $\bar \delta^2 := (\bar \delta^1 \otimes \operatorname{id}) \circ \bar \delta^1$ acts as zero on $\CH_0(F)$. 
\medskip

The proofs 
of Theorem~\ref{T:Voisin-filt} and~\ref{thm2:splitting-moduli} essentially follow the above argument\,: first we construct a splitting  $\CH_0(X) = \bigoplus_k \CH_0(X)_{(k)}$ of the filtration of Voisin (resp.\ Shen--Yin--Zhao), with $\CH_0(X)_{(0)}$ spanned by the Beauville--Voisin point,  that satisfies \eqref{E:comult-CH}. This already yields the inclusion $S_\bullet \subseteq R_\bullet$ (resp.\ $S^{\mathrm{SYZ}}_\bullet \subseteq R_\bullet$). The reverse inclusion is then established by showing that  $\bar \delta^{k-1}_*$ is injective when restricted to $\CH_0(F)_{(k)}$. 

These arguments have natural generalizations that are better expressed in the language of birational motives\,: 
in each of the cases \eqref{hilb}, \eqref{moduli}, \eqref{kum} and \eqref{fano}, we show that the Voisin (resp.\ Shen--Yin--Zhao) filtration admits a ``motivic'' splitting in the sense that it is induced by a grading $\ho(X) = \bigoplus_{0\leq k \leq n} \ho(X)_{(k)}$ on the birational motive  of $X$. 
If this grading is a \emph{unital grading}, \emph{i.e.}, if it is compatible with the diagonal embedding and is such that $\ho(X)_{(0)}$ is the unit motive, then the induced grading  $\CH_0(X) = \bigoplus_k \CH_0(X)_{(k)}$ satisfies \eqref{E:comult-CH}. 
If in addition the grading is a \emph{strict grading}, \emph{i.e.}, if $\bar \delta^{k-1}: \ho(X)_{(k)} \to (\ho(X)_{(1)})^{\otimes k}$ is split injective for all $k$, then $\bar \delta^{k-1}_*$ is injective when restricted to $\CH_0(X)_{(k)}$, and we can conclude that $S_\bullet = R_\bullet$.

The structure of the proofs of Theorem~\ref{T:Voisin-filt} and~\ref{thm2:splitting-moduli} are then as follows\,:
we start by constructing in Theorem~\ref{thm:BMCK} unital gradings in cases \eqref{hilb}, \eqref{moduli}, \eqref{kum} and \eqref{fano}, then show that these are strict gradings in Theorem~\ref{thm:cogeneration}, and finally check in the proof of Theorem~\ref{thm:splitting} (resp.\ Theorem~\ref{thm:splitting-moduli}) that the induced gradings on $\CH_0$ provide splittings to the filtration of Voisin (resp.\ Shen--Yin--Zhao).

\subsection{Birational motives of hyper-K\"ahler varieties and the co-radical filtration}

The notions of \emph{unital grading} and of \emph{strict grading} mentioned above are related to the natural co-algebra structure on the birational motive of $X$.
We now proceed to explain 
how the strict grading on the birational motive of~$X$ and the co-radical filtration on $\CH_0(X)$ come into play.

\subsubsection{Birational motives as co-algebra objects}---
	If $X$ denotes a smooth projective variety over a field~$K$, 
	the diagonal embedding $\delta:X\hookrightarrow X\times_K X$ and the structure map $\epsilon: X \to \operatorname{Spec}K$ formally satisfy
	\begin{itemize}
	\item the co-unital law	$(\mathrm{id}\times
	\epsilon)\circ \delta=\mathrm{id}=(\epsilon\times \mathrm{id})\circ \delta : X\to X $\,;
	\item the co-associative law  $(\delta\times \mathrm{id})\circ\delta= (\mathrm{id}\times \delta)\circ \delta : X\to X \times X \times X$\,;
	\item  the co-commutativity law $\delta=\tau\circ \delta : X \to X\times X$, where $\tau : X\times X \to X\times X$ is the morphism permuting the two factors.
	\end{itemize}
	The contravariant action of $\delta$ and $\epsilon$, together with the $\otimes$-structure on the category of Chow motives, endows then the Chow motive of $X$ with the structure of a commutative algebra object. A famous result of K\"unnemann~\cite{K-abelian}, reviewed in Appendix~\ref{SS:abelian}, is the following
	\begin{thm2}[K\"unnemann] \label{T:K}
		Let $A$ be an abelian variety over a field $K$. Then the Chow motive $\h(A)$ (with rational coefficients) of $A$ admits a canonical direct summand $\h^1(A)$ and the induced morphism
		$$\operatorname{Sym}^* \h^1(A) \stackrel{\sim}{\longrightarrow} \h(A)$$ is an isomorphism of algebra objects. 
	\end{thm2}
Such an isomorphism endows naturally the algebra object $\h(A)$ with a grading.
 Of course, such an isomorphism is a lift to rational equivalence of the well-known fact that the cohomology algebra of an abelian variety is isomorphic to the symmetric power of its degree-1 cohomology group. (Note that due to the fact that cup-product is graded-commutative, $\operatorname{Sym}^{2\dim A+1}\HH^1(A) = 0$).
 \medskip
 
 Let now $X$ be a hyper-K\"ahler variety of dimension $2n$. A result of Bogomolov~\cite{Bogomolov} shows that the natural map $\operatorname{Sym}^{\leq n}\HH^2(X,\Q) \hookrightarrow \HH^*(X,\Q)$ is injective. In addition, since $\HH^{2k,0}(X) = \C \sigma^k$ for all $0\leq k \leq n$ for some nowhere degenerate 2-form $\sigma$, the above natural map restricts to an isomorphism $\operatorname{Sym}^{\leq n}\HH^{2,0}(X) \stackrel{\sim}{\longrightarrow} \HH^{2*,0}(X)$ on the ``birational part'' of the cohomology of~$X$. Based on Beauville's splitting principle~\cite{Bauville-splitting}, which roughly draws parallels between the intersection theory on abelian varieties and that on hyper-K\"ahler varieties, we may ask whether the ``birational part'' of the Chow motive of $X$ is generated in degree 2. 
 The correct framework for such a question is Kahn and Sujatha's pseudo-abelian $\otimes$-category of birational motives~\cite{KS},  the definition and basic properties of which are recalled in \S \ref{S:birat}.  This is naturally a \emph{covariant} theory, and in that setting the diagonal embedding $\delta$ together with the structure map $\epsilon$ naturally endow the birational motive $\ho(X)$ of $X$ with the structure of a co-commutative co-algebra structure.
 We may then phrase
 \begin{conj2}	\label{conj2:MBCK-bis}
Let $X$ be a hyper-K\"ahler variety of dimension $2n$. Then  the birational motive $\ho(X)$ (with rational coefficients) admits a direct summand $\ho(X)_{(1)}$, called the \emph{primitive part}, such that the co-induced morphism 
\begin{equation}\label{E:iso-coalgebra}
\ho(X) \stackrel{\sim}{\longrightarrow} \operatorname{Sym}^{\leq n} \ho(X)_{(1)}
\end{equation}
 is an isomorphism of co-algebra objects.
 \end{conj2}
Such an isomorphism naturally endows the co-algebra object $\ho(X)$ with a grading (see \S \ref{SS:grading}). In Proposition~\ref{P:strictgradingBCK}, we show that this grading is cohomologically meaningful in the sense that the transcendental cohomology of the direct summand $\ho(X)_{(1)}$ coincides with the transcendental cohomology of $X$ of degree 2. Moreover, we would in fact expect a direct summand $\ho(X)_{(1)}$, with the property that \eqref{E:iso-coalgebra} is an isomorphism, to be unique.
 Conjecture~\ref{conj2:MBCK-bis} is substantiated by the following\,:

 \begin{thm2}[Theorem~\ref{thm:cogeneration}]\label{thm:main2}
 	Let $X$ be one of the hyper-K\"ahler varieties \eqref{hilb}, \eqref{moduli}, \eqref{kum} or \eqref{fano}.
 	Then $X$ satisfies the conclusion of Conjecture~\ref{conj2:MBCK-bis}. In other words, denoting $2n=\dim X$,  we have a co-algebra grading
 	$$\ho(X) = \ho(X)_{(0)} \oplus \cdots \oplus \ho(X)_{(n)}$$
 	such that
 	the natural graded morphism $$\ho(X) \stackrel{\sim}{\longrightarrow} \operatorname{Sym}^{\leq n} \ho(X)_{(1)}$$ is an isomorphism of graded co-algebra objects.
 \end{thm2}

 Let us mention that we establish Theorem~\ref{thm:main2} in case \eqref{moduli} by showing, as a result of independent interest in Theorem~\ref{thm:moduli}, that
the birational motive of a moduli space $\operatorname{M}_\sigma(v)$ of stable objects on a K3 surface~$S$ is isomorphic as co-algebra object to the birational motive of $\operatorname{Hilb}^n(S)$, where $2n= \dim \operatorname{M}_\sigma(v)$.

\subsubsection{Unital gradings and strict gradings on birational motives}\label{SSS:gradings}---
Let $X$ be a smooth projective variety over $K$. Let us explain how the existence of a co-algebra isomorphism as in Theorem~\ref{thm:main2} $$\ho(X) \stackrel{\sim}{\longrightarrow} \operatorname{Sym}^{\leq n} \ho(X)_{(1)}$$ has consequences for zero-cycles on $X$ as in  Theorems~\ref{T:Voisin-filt} and~\ref{thm2:splitting-moduli}. 
We say that a grading
	\begin{equation}\label{E:unital-grading}
	\ho(X) = \ho(X)_{(0)} \oplus \cdots \oplus \ho(X)_{(n)}
	\end{equation}
 is a \emph{co-algebra grading} if the restriction of the co-multiplication $\delta: \ho(X) \to \ho(X)\otimes \ho(X)$ to the summand $\ho(X)_{(k)}$ factors through $\bigoplus_{i+j=k} \ho(X)_{(i)}\otimes \ho(X)_{(j)}$. In addition the grading is said to be\,:
 \begin{itemize}
\item \emph{unital} if  the restriction $\epsilon_i : \ho(X)_{(i)}\to \mathds 1$ of the degree map is an isomorphism if $i=0$ and zero otherwise (see \S \ref{SS:grading}, Definition~\ref{D:MBCK} and Proposition~\ref{P:strictgradingBCK})\,;
\item \emph{strict}  if in addition the natural graded morphism $\ho(X) \to \operatorname{Sym}^{\leq n} \ho(X)_{(1)}$ is split injective, or in other words if $\ho(X)$ is co-generated by $\ho(X)_{(1)}$ (see \S \ref{SS:strictgrading}).
 \end{itemize}
In particular, if a hyper-K\"ahler variety $X$ fulfills the conclusion of Conjecture~\ref{conj2:MBCK-bis} then $\ho(X)$ has a strict grading.
If \eqref{E:unital-grading} defines a unital grading on the birational motive of a smooth projective variety $X$, we denote $o: \mathds 1 \to \ho(X)$ the morphism $\epsilon_0^{-1} : \mathds 1 \to \ho(X)_{(0)} \hookrightarrow \ho(X)$\,; 
this defines  a degree-1 zero-cycle  $o\in  \CH_0(X)$ such that $\delta_*o = o\times o \ \mbox{in}\ \CH_0(X\times_KX)$, \emph{i.e.}, $o: \mathds 1 \to \ho(X)$ is a unit in the co-algebra sense (see \S \ref{SS:grading}). Note that the class of any $K$-point on $X$ defines a unit in $\CH_0(X)$. We refer to Section~\ref{S:co-alg} for a review of co-algebra objects in an abstract $\otimes$-category.

\subsubsection{The co-radical filtration on zero-cycles}---
Let us now fix a smooth projective variety $X$ over $K$ equipped with a \emph{unit}  $o\in  \CH_0(X)$. 
In analogy to the elementary theory of co-algebras (as exposed for instance in \cite{sweedler}), we define (\S \ref{SS:co-rad} and Definition~\ref{D:co-rad}) the \emph{co-radical filtration} $R_\bullet\CH_0(X)$ associated to the unit $o$ as\,:
	$$R_k\CH_0(X) := \ker \big(\bar{\delta}^k_*  : \CH_0(X) \to \CH_0(X^{k+1}) \big) \quad \mbox{for } k\geq0,$$
where
$\bar \delta^k := \bar p^{\otimes k+1}\circ \delta^k$ with $\delta^k$ the diagonal embedding $X\hookrightarrow X^{k+1}$ and $\bar \delta^0 := \bar p := \mathrm{id} - X\times o$ the projector with kernel $o$.
Of course, this cycle $\bar \delta^k$ agrees with the one defined in~\eqref{E:it-red-comult}\,; 
the morphism $\bar \delta := \bar{\delta}^1$ is called the \emph{reduced co-multiplication}, while $\bar \delta^k$ is called the \emph{$k$-th iterated reduced co-multiplication}.
We note that, even when $K$ is algebraically closed, $R_k\CH_0(X)$ need not be generated by classes of points\,; see Remark~\ref{rmk:co-radical-abelian}. The equivalences~\eqref{E:equivalence} and~\eqref{E:equivalence-SYZ} however show that this is the case for hyper-K\"ahler varieties as in \eqref{hilb}, \eqref{moduli}, \eqref{kum} and \eqref{fano}.

\subsubsection{The co-radical filtration and the conjectural Bloch--Beilinson filtration}---
Assume now that the birational motive of $X$, considered as a co-algebra object, admits a unital  grading as in~\eqref{E:unital-grading} and let
$$G_k \CH_0(X) := \CH_0\big(\ho(X)_{(0)}\oplus \cdots \oplus \ho(X)_{(k)}\big) =\CH_0\big(\ho(X)_{(0)}\big) \oplus \cdots \oplus \CH_0\big(\ho(X)_{(k)}\big)$$
be the associated ascending filtration. As explained in \S\ref{R:oppositeBB}, the filtration $G_\bullet$ is opposite to the conjectural Bloch--Beilinson filtration $F^\bullet$ in the sense that 
$$G_k\CH_0(X)\cap F^{2k}\CH_0(X) = \CH_0(\ho(X)_{(k)}).$$
The following formal result, which is a direct translation of Proposition~\ref{P:crucial} to the setting of birational motives and whose proof is given in \S \ref{SS:proofTco-rad},
 provides the link between the ascending filtration associated to a unital, resp.~strict, grading and the co-radical filtration associated to the unit $o$\,:

\begin{prop2}\label{P:co-rad}
Assume $\ho(X)$ admits a unital grading as in~\eqref{E:unital-grading}. Then
$$G_k \CH_0(X) \subseteq R_k\CH_0(X).$$ 
In particular, $\CH_0(X) =  R_n\CH_0(X)$ and hence $([x]-o)^{\times n+1} = 0 \ \mbox{in}\ \CH_0(X^{n+1})$ for all $x\in X$.
Moreover,  if the unital grading~\eqref{E:unital-grading} is strict, then 
$$G_k \CH_0(X) = R_k\CH_0(X).$$ 
\end{prop2}

As a consequence, we see that if $X$ is a hyper-K\"ahler variety whose birational motive admits a strict grading (\emph{e.g.}, if Conjecture~\ref{conj2:MBCK-bis} holds for $X$),
 then the co-radical filtration is opposite to the conjectural Bloch--Beilinson filtration  $F^{2\bullet} \CH_0(X)$ and we have $$R_k\CH_0(X)\cap F^{2k}\CH_0(X) = \CH_0(\ho(X)_{(k)}).$$
Moreover, Proposition~\ref{P:co-rad} 
reduces the proof of Theorem~\ref{T:Voisin-filt} (resp.\ Theorem~\ref{thm2:splitting-moduli}) to showing that the Voisin filtration $S_\bullet$ (resp.\ the filtration $S_\bullet^{\mathrm{SYZ}}$) coincides with the filtration $G_\bullet$ induced by a strict grading.

\subsection{Further results and remarks} We describe some motivation for this work, as well as related results. These are not used in the proofs of Theorems~\ref{T:Voisin-filt}, \ref{thm2:splitting-moduli} and~\ref{thm:main2}.

\subsubsection{Splitting of filtrations on the Chow group of zero-cycles}
Beauville's splitting principle~\cite{Bauville-splitting} asserts that the conjectural Bloch--Beilinson filtration on the Chow ring of a hyper-K\"ahler variety~$X$ admits a splitting compatible with the ring structure, \emph{i.e.} compatible with pull-back along the diagonal embedding $\delta: X \hookrightarrow X\times X$.
Another motivation for this work was to make sense of what it means for a filtration on the Chow group of zero-cycles to admit a  splitting compatible with the diagonal embedding. 
As explained in \S \ref{SS:0cycle}, 
the diagonal embedding map does not provide a co-algebra structure on $\CH_0(X)$ (which is the reason we work with birational motives), but rather a map
$$\delta_* : \CH_0(X) \longrightarrow \CH_0(X\times_K X).$$
As is explained above, assuming Conjecture~\ref{conj2:MBCK-bis}, the co-radical filtration associated to a strict grading on the birational motive of a hyper-K\"ahler variety is opposite to the conjectural Bloch--Beilinson filtration and provides a splitting.
We would like to spell out explicitly, by avoiding the use of birational motives, or any mention of the conjectural Bloch--Beilinson filtration, what it means for this splitting to be compatible with the diagonal embedding.
For that purpose we introduce in 
Appendix~\ref{S:delta}
the notion of \emph{$\delta$-filtration} and we conjecture, with evidence provided by \eqref{hilb}, \eqref{moduli}, \eqref{kum} and \eqref{fano}, that, for every hyper-K\"ahler variety, there exists a unit $o\in \CH_0(X)$ such that the associated co-radical filtration is a split $\delta$-filtration.
In addition, in Conjecture~\ref{conj:Voisin-split-cogen}, based on the evidence provided by Theorem~\ref{T:Voisin-filt}, we conjecture that the Voisin filtration coincides with the co-radical filtration, and should hence also conjecturally be a split $\delta$-filtration\,; see Conjecture~\ref{conj:Voisin-split-cogen2}.

\subsubsection{The co-radical filtration for positive-dimensional cycles}
The co-radical filtration can also be naturally defined for positive-dimensional cycles on a smooth projective variety $X$ equipped with a unit $o\in \CH_0(X)$\,; see Definition~\ref{D:corad-positive}.
In \S\ref{SS:abelian} of Appendix~\ref{S:app}, we consider the case of an abelian variety~$A$, and we show in Theorem~\ref{thm:abelian} that the co-radical filtration defines a ring filtration on $\CH^*(A)$ that is opposite to the candidate Bloch--Beilinson filtration of Beauville~\eqref{E:BB-abelian},
 thereby establishing in particular Proposition~\ref{P:smash-abelian}. The key point is that the (contravariant) Chow motive of $A$ is generated in degree~1 (Theorem~\ref{T:K}). On the other hand, since the cohomology algebra of a hyper-K\"ahler variety is not generated in degree~2 in general, one does \emph{not} expect the co-radical filtration to be opposite to the Bloch--Beilinson filtration for positive-dimensional cycles on hyper-K\"ahler varieties in general\,; see Remark~\ref{R:MCK-corad}.

\subsubsection{Relation to work of Barros--Flapan--Marian--Silversmith}\label{SS:BFMS}
In independent work concerned with tautological classes in the Chow groups of moduli spaces of stable sheaves on K3 surfaces, 
Barros, Flapan, Marian and Silversmith have introduced in \cite[\S 4]{BFMS} an ascending filtration $S^{\mathrm{BFMS}}_k\CH_i(\operatorname{M}_\sigma(v))$ (see~\eqref{E:BFMS} for the definition), which in Proposition~\ref{P:BFMS-R} is shown to  coincide with our co-radical filtration (Definition~\ref{D:corad-positive})\,:
$$S^{\mathrm{BFMS}}_k\CH_i(\operatorname{M}_\sigma(v)) = R_{i+k}\CH_i(\operatorname{M}_\sigma(v)).$$
Regarding zero-cycles, the inclusion $S_k^{\mathrm{SYZ}}\CH_0(\operatorname{M}_\sigma(v)) \subseteq S^{\mathrm{BFMS}}_k\CH_0(\operatorname{M}_\sigma(v))$, together with the implication $[\mathcal E]\in S^{\mathrm{SYZ}}_k\CH_0(\operatorname{M}_\sigma(v)) \Rightarrow  ([\mathcal E]-[o])^{\times k+1} = 0 $ of \eqref{E:equivalence-SYZ}, is established in~\cite[Lem.~3]{BFMS}.
In Theorem~\ref{thm:splitting-moduli}, by exploiting the existence (Theorem~\ref{thm:main2}) of a strict grading on the birational motive $\ho(\operatorname{M}_\sigma(v))$,
we prove the reverse implication and thereby settle
$$S^{\mathrm{SYZ}}_k \CH_0(\operatorname{M}_\sigma(v))  = R_k\CH_0(\operatorname{M}_\sigma(v)), $$
 and henceforth the conjectured equality $S^{\mathrm{SYZ}}_k\CH_0(\operatorname{M}_\sigma(v))  = S^{\mathrm{small}}_k\CH_0(\operatorname{M}_\sigma(v))$ of \cite[Rem.~5]{BFMS}.

\subsubsection{Motivic surface decomposability for hyper-K\"ahler varieties}
Recently, Voisin~\cite{VoisinTriangle} introduced the cohomological notion of \emph{surface decomposability} (see Definition~\ref{def:Voisin}), conjectured that every hyper-K\"ahler variety is surface decomposable and established this in a number of cases, including \eqref{hilb}, \eqref{kum}, \eqref{fano} and \eqref{llsvs}. In Definition~\ref{D:MSD}, we introduce the notion of \emph{motivic surface decomposition}. This notion is concerned with zero-cycles and provides, in case the surfaces have vanishing irregularity, a refinement of Voisin's notion which is concerned with global $k$-forms\,; see Proposition~\ref{prop:voisin}. As such, our motivic surface decomposability  can be thought of as a lift to rational equivalence of Voisin's cohomological surface decomposability. In Conjecture~\ref{conj:MSD}, we conjecture that every hyper-K\"ahler variety is motivically surface decomposable, and we establish in Theorem~\ref{thm:MSD} the conjecture in cases \eqref{hilb}, \eqref{moduli}, \eqref{kum}, \eqref{fano} and \eqref{llsvs}.
In  the language of birational motives, Proposition~\ref{prop:coalg} implies that, for a  hyper-K\"ahler variety admitting a motivic surface decomposition,  the co-algebra structure on the birational motives of~$X$ is determined by the co-algebra structure on the birational motives of surfaces. Precisely, we have for instance\,:

\begin{thm2}[Theorem~\ref{thm:MSD} and Proposition~\ref{prop:coalg}]
	\label{thm2:msd}
Let $X$ be a smooth projective variety of dimension $2n$ that is birational to one of the hyper-K\"ahler varieties \eqref{hilb}, \eqref{moduli}, \eqref{kum}, \eqref{fano} or \eqref{llsvs}. Then there exists a smooth projective surface $B$, a split injective morphism $\gamma:\ho(X) \to \ho(B^n)$ and a split surjective morphism $\gamma' : \ho(B^n) \to \ho(X)$ such that 
\begin{enumerate}[(a)]
	\item $\gamma'\circ \gamma = \mathrm{id} : \ho(X) \to \ho(X)$.
	\item $(\gamma'\otimes \gamma')\circ \delta_{B^n} \circ \gamma = \delta_X : \ho(X) \to \ho(X\times X)$.
\end{enumerate}
\end{thm2}

\noindent Equivalently, in view of Lemma~\ref{lem:zero}, there exists a smooth projective surface $B$, correspondences $\gamma,\gamma' \in \CH^{2n}(X\times B^n)$  such that 
\begin{enumerate}[(a)]
	\item $(\gamma'\circ \gamma)_* = \mathrm{id} : \CH_0(X) \to \CH_0(X)$\,;
	\item $(\gamma'\otimes \gamma')_*(\delta_{B^n})_* \gamma_* = (\delta_X)_* :  \CH_0(X) \to \CH_0(X\times X)$.
\end{enumerate}

As a corollary to Theorem~\ref{thm:MSD} and its proof, we obtain in Corollary~\ref{C:SD} the existence of a surface decomposition in new cases, namely for moduli spaces of stable objects on K3 surfaces, and we reduce the existence of a surface decomposition for moduli spaces of stable objects in the Kuznetsov component of a cubic fourfold to a conjecture of Shen--Yin~\cite[Conj.~0.3]{SY}. We also obtain surface decompositions in case~\eqref{fano} for any surface with vanishing irregularity
 that is the base of a uniruled divisor\,; see Remark~\ref{R:anysurface}.

\subsection{Organization of the paper}
In \S \ref{S:co-alg} we review the notion of co-algebra object in a $\otimes$-category and introduce the notions of \emph{unital grading}, \emph{strict grading}, and \emph{co-radical filtration} in this general setting. 
In \S \ref{S:birat}, we introduce the $\otimes$-category of \emph{birational motives} due to Kahn--Sujatha, explain how it can be viewed as a ``$\otimes$-enhancement of $\CH_0$'', and explain how the diagonal embedding endows the birational motive of a variety with the structure of a co-algebra object. 
In \S \ref{S:moduli} we reformulate results of O'Grady, Marian, Shen, Yin and Zhao  and show that the birational motive of moduli spaces of stable objects on a K3 surface~$S$ is isomorphic as co-algebra object to the birational motive of~$\mathrm{Hilb}^n(S)$. 
 We then
show in \S \ref{S:MBCK} that the birational motives of hyper-K\"ahler varieties as in \eqref{hilb}, \eqref{moduli}, \eqref{kum}, \eqref{fano} can be endowed with a unital grading and in \S \ref{S:strict} that this unital grading is in fact a strict grading, by establishing Theorem~\ref{thm:main2}. 
In~\S \ref{S:corad}, we simply spell out the abstract definition of \emph{co-radical filtration} given in \S \ref{S:co-alg} in the case of varieties $X$ equipped with a unit $o\in \CH_0(X)$ and prove Proposition~\ref{P:co-rad}. 
Finally in \S \ref{SS:VoisinFil} we conclude in cases \eqref{hilb}, \eqref{kum}, \eqref{fano} (resp.\ in case \eqref{moduli})
that the filtration of Voisin (resp.\ Shen--Yin--Zhao) coincides with the co-radical filtration, thereby establishing our main Theorems~\ref{T:Voisin-filt} and~\ref{thm2:splitting-moduli}.

The paper concludes with three appendices, which are of independent interest and whose results are not used in the main body of this article\,: Appendix~\ref{S:app} discusses the co-radical filtration for positive-dimensional cycles, in particular for abelian varieties, as well as its relation to multiplicative Chow--K\"unneth decompositions and modified diagonals, and Appendix~\ref{S:delta} argues that the filtration of Voisin should be a \emph{split $\delta$-filtration}. In Appendix~\ref{S:MSD} we introduce the notion of \emph{motivic surface decomposition} and establish Theorem~\ref{thm2:msd}.

\subsection{Notation and Conventions} Given a field $K$, $\Omega$ denotes a
	\emph{universal domain} containing~$K$, \emph{i.e.}, an algebraically closed
	field of infinite transcendence degree over its prime subfield. For a smooth projective variety $X$ over $K$, we denote $\delta : X \hookrightarrow X\times_KX$ the diagonal embedding and $\epsilon : X \to \operatorname{Spec}K$ the structure morphism. In \S \S \ref{SS:biratmot}, \ref{SS:co-alg-biratmot} and~\ref{SS:0cycle}, Chow groups are with integral coefficients, unless explicitly stated otherwise. From \S \ref{SS:cor-coalg} onwards, Chow groups will be understood to be with rational coefficients.

	\subsection{Acknowledgements} Thanks to Lie Fu, Robert Laterveer and Mingmin Shen for useful exchanges during a pleasant stay at the University of Amsterdam in February 2020. Thanks to Giuseppe Ancona and Robert Laterveer for useful comments.

\section{On co-algebra objects in a $\otimes$-category}
\label{S:co-alg}
In this section, we fix a commutative ring $R$ and let $\mathcal{C}$ be a $R$-linear, symmetric monoidal category with tensor unit
$\mathds{1}$ (a $\otimes$-category over $R$, in the language of \cite[\S 2.2.2]{andre}).

\subsection{Co-algebra objects}\label{SS:co-alg}
A \emph{co-algebra object} in  $\mathcal{C}$ is an object $M$ together with a
co-unit morphism $\epsilon : M \to \mathds{1}$ and a co-multiplication morphism $\delta :
M\to M\otimes M$ satisfying the co-unit axiom $(\mathrm{id}\otimes
\epsilon)\circ \delta=\mathrm{id}=(\epsilon\otimes \mathrm{id})\circ \delta$ and the co-associativity axiom
$(\delta\otimes \mathrm{id})\circ\delta= (\mathrm{id}\otimes \delta)\circ \delta$. It is called
\emph{co-commutative} if moreover $\delta=c_{M, M}\circ \delta$ is satisfied, where
$c_{M, M}$ is the commutativity constraint of the category~$\mathcal{C}$. 
We define  inductively $\delta ^k := (  \delta \otimes \mathrm{id} \otimes \cdots \otimes \mathrm{id}) \circ   \delta^{k-1} : M\to  M^{\otimes (k+1)}$.
A \emph{morphism of co-algebra objects} between two co-algebra objects $M$ and $N$ is a
morphism $\phi: M\to N$ in $\mathcal{C}$ that preserves co-multiplication  and co-unit. We note that co-algebra structures on objects $M$ and $N$ of
$\mathcal{C}$ induce naturally a co-algebra structure on the tensor product $M\otimes N$, and that a morphism $\phi: M \to N$ of co-algebra objects
induces naturally a morphism of co-algebra objects $\phi^{\otimes n} : M^{\otimes
	n} \to N^{\otimes n}$ which is an isomorphism if $\phi$ is. Finally, if the co-algebra object $M$ is co-commutative, then the co-multiplication $\delta : M\to M\otimes M$ is a morphism of co-algebras.

A \emph{unit} for $M$ is a non-zero morphism $u: \mathds 1 \to M$ in $\mathcal C$ such that  $\delta \circ u = u \otimes u$ (which forces the additional identity $\epsilon \circ u = 1$). Equivalently, a unit is a co-algebra morphism $u: \mathds 1 \to M$.

\subsection{Unital graded co-algebra objects}\label{SS:grading}
Let $(M,\delta,\epsilon)$ be a co-commutative co-algebra object of $\mathcal C$. 
A \emph{grading} on the co-algebra objects $M$ is a (finite) direct sum decomposition 
$$M = M_{(0)} \oplus M_{(1)} \oplus \cdots \oplus M_{(n)}$$ in $\mathcal C$ with respect to which both $\delta$ and $\epsilon$ are graded morphisms, where the unit object $\mathds 1$ is understood to be of pure grade $0$. In other words, the grading has the property that
the restriction of the co-unit
$$\xymatrix{\overline M := \bigoplus_{i>0}M_{(i)} \ar@{^(->}[r] & M \ar[r]^\epsilon & \mathds 1
}$$ is zero, and
the restriction of the co-multiplication factors as 
$$\xymatrix{
	M_{(k)} \ar@{^(->}[r] \ar@{-->}[drr]  & M\ar@{->}[r]^\delta  &M\otimes M \\
	&& \bigoplus_{i+j=k} M_{(i)} \otimes M_{(j)} . \ar@{^(->}[u]
}$$  
Here we have followed the classical references \cite[\S 2]{MM}  and \cite[\S 11]{sweedler}.
Furthermore, a graded co-algebra object $M = M_{(0)} \oplus M_{(1)} \oplus \cdots \oplus M_{(n)}$ is said to be \emph{unital} if the restriction of the co-unit $\epsilon_0 : M_{(0)} \to \mathds 1$ is an isomorphism (in case $M$ is a co-algebra, this corresponds to the notion of \emph{connected (graded) co-algebra} in \cite[\S 2]{MM}) and of \emph{pointed irreducible graded co-algebra} in \cite{sweedler}). The terminology is justified by the fact that the graded morphism
$$\xymatrix{ u : \mathds 1 \ar[r]^{(\epsilon_0)^{-1}}_{\sim} & M_{(0)} \ar@{^(->}[r] & M}$$ defines a \emph{unit}\,; it is the unique graded unit morphism $\mathds 1 \to M$. 
We then write $(M,\delta, \epsilon, u)$ for a unital graded co-algebra. The tensor product of two unital graded co-algebra objects equipped with the obvious grading, co-multiplication, co-unit and unit is a unital graded co-algebra object.

\subsection{The reduced co-multiplication} \label{SS:co-mult}
Let $M = (M,\delta,\epsilon)$ be a co-algebra object of $\mathcal C$, equipped with a unit $u: \mathds 1 \to M$. 
The \emph{reduced co-multiplication} is defined to be 
\begin{equation}\label{E:reduced-comult}
\bar \delta := (\delta - u\otimes \mathrm{id}_{M} - \mathrm{id}_{M}\otimes u)\circ \bar p \ :  M \to  M\otimes  M,
\end{equation}
where
$\bar p:= \mathrm{id}_M-u\epsilon : M \to M$. We then define inductively $\bar \delta ^k := (\bar \delta \otimes \mathrm{id} \otimes \cdots \otimes \mathrm{id}) \circ \bar \delta^{k-1} :  M\to M\,^{\otimes (k+1)}.$
This notion is particularly relevant in case $M = (M,\delta,\epsilon,u)$ defines a unital graded co-algebra object of $\mathcal C$. In that case, $\bar p$ is the graded projector on $\overline M$, and $\bar \delta $ factors through $\overline M \otimes \overline M$ and its restriction to $\overline M$, which by abuse is still denoted $\bar \delta$, is given by
$$\bar \delta := \delta|_{\overline M} - u\otimes \mathrm{id}_{\overline M} - \mathrm{id}_{\overline M}\otimes u \ : \ \overline M \to \overline M\otimes \overline M.$$
To see that $\bar \delta $ factors through $\overline M \otimes \overline M$, one uses  the co-unit axiom, the fact that $\epsilon_0 : M_{(0)} \to \mathds 1$ is an isomorphism with inverse $u$ and the fact that the co-multiplication $\delta$ is graded.
One also easily checks that $\bar \delta$ is graded and co-associative. 
It is perhaps useful to explicitly mention that this gives that $\bar \delta |_{M_{(1)}} = 0$ and that for $k>0$ we have the factorization
$$\xymatrix{ \bar \delta|_{M_{(k)}} \ :\ M_{(k)} \ar[r]& 
\big(M_{(1)}\otimes M_{(k-1)}\big) \oplus \big(M_{(2)}\otimes M_{(k-2)} \big)\oplus \cdots \oplus \big(M_{(k-1)}\otimes M_{(1)}\big)
	 \ar@{^(->}[r] 
	 & \overline M \otimes \overline M.}$$
In particular, defining as above inductively 
$$\bar \delta ^k := (\bar \delta \otimes \mathrm{id} \otimes \cdots \otimes \mathrm{id}) \circ \bar \delta^{k-1} : \overline M\to\overline  M\,^{\otimes (k+1)},$$ 
we have 
$\bar \delta^k|_{M_{(1)} \oplus \cdots \oplus M_{(k)}} = 0$ and hence $\bar \delta^{n}=0$.
Finally $\bar \delta^k$ can be described as the composition
$$\xymatrix{\bar \delta^k \ :\ \overline M \ar@{^(->}[r] & M \ar[r]^{\delta^k\quad }& M^{\otimes (k+1)} \ar@{->>}[r]   &\overline M\,^{\otimes (k+1)}
},$$
\emph{i.e.,} $\bar p\,^{\otimes k+1}\circ\delta^k = \bar \delta^k\circ \bar p$ with the convention that $\bar \delta^0 = \bar p$.

\subsection{Co-generation of 
	co-algebra objects} 
Let $N$ be an object of $\mathcal C$. For all $n\geq 0$, we define 
$$T^{\leq n} N := \mathds 1 \oplus N \oplus N^{\otimes 2} \oplus \cdots \oplus N^{\otimes n}.$$
The canonical (iso)morphisms $N^{\otimes k} \to N^{\otimes i}\otimes N^{\otimes j}$ for $k=i+j$ naturally endow the object $T^{\leq n} N$ with the structure of a unital graded co-algebra. We call it the $n$-truncated tensor co-algebra on~$N$. 
Given a co-algebra object $M\in \mathcal C$ and a morphism $r: M\to N$ in $\mathcal C$,  the induced morphism 
$$T^{\leq n}r : M \to T^{\leq n}N,$$ where $T^{\leq n}r := \epsilon + r + r^{\otimes 2}\circ \delta +\cdots + r^{\otimes n}\circ \delta^{n-1}$, is a co-algebra morphism. 

We say that $M$ is \emph{co-generated} by $N$ if the induced co-algebra morphism 
$T^{\leq n}r : M \to T^{\leq n}N,$ is split injective for some $n>0$.
(This definition is inspired from the case of connected co-algebras, for which the co-free co-algebra generated by a sub-vector space $N\subseteq M$ coincides with the tensor co-algebra $TN$.)

Finally, we note that if $R$ contains $\Q$ and if $\mathcal C$ is pseudo-abelian, then $\delta^{k-1} :M \to M^{\otimes k}$ factors through the symmetric power
 $\operatorname{Sym}^k M$ (which can be defined as the image of the idempotent $\frac{1}{k!}\sum_{\sigma\in \mathfrak{S}_k} \sigma \, :\, M^{\otimes k} \to M^{\otimes k}$ with the symmetric group $\mathfrak{S}_k$ acting on $M^{\otimes k}$ by permuting the factors), so that  $T^{\leq n}r : M \to T^{\leq n}N$ factors through the $n$-truncated symmetric co-algebra $\operatorname{Sym}^{\leq n} N$.

\subsection{Strictly graded co-algebra objects}\label{SS:strictgrading}
 Recall that, by definition \cite[p. 232]{sweedler}, a pointed irreducible graded co-algebra $M$ is strictly graded if $M_{(1)}$ consists exactly of the primitive elements, \emph{i.e.}, if $M_{(1)} = P(M) := \{g\in M \, \vert \, \delta g = g\otimes u + u\otimes g\}$.
  In analogy, we say that a unital graded co-algebra object $M=M_{(0)}\oplus \cdots \oplus M_{(n)}$ of $\mathcal C$ is a \emph{strictly graded co-algebra object} if the restriction of the reduced co-multiplication
$$\bar \delta :  M_{(2)} \oplus M_{(3)} \oplus \cdots \oplus M_{(n)} \to \overline M \otimes \overline M$$ is split injective, or equivalently, in view of the general fact that $\bar \delta |_{M_{(1)}} = 0$, if $M_{(1)}$ is the kernel of $\bar \delta$. In that case, we say that $M_{(1)}$ is the \emph{primitive} part of $M$.

\begin{prop} \label{P:strictgrading}
A unital graded co-algebra object $M=M_{(0)}\oplus \cdots \oplus M_{(n)}$ of $\mathcal C$  is strictly graded if and only if it is co-generated by $M_{(1)}$.
\end{prop}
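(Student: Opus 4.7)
The plan is to reduce both implications to a graded analysis of the co-algebra morphism $T^{\leq n}r: M \to T^{\leq n}M_{(1)}$, where $r: M \to M_{(1)}$ is the canonical projection onto the primitive summand. First I would observe that, since $r$ annihilates the summands $M_{(i)}$ for $i \neq 1$, a degree count forces $T^{\leq n}r$ to be graded: its restriction to $M_{(k)}$ lands in the single tensor power $M_{(1)}^{\otimes k}$ and coincides there with $\bar\delta^{k-1}|_{M_{(k)}}$. Consequently, $T^{\leq n}r$ is split injective if and only if the component $\bar\delta^{k-1}|_{M_{(k)}}: M_{(k)} \to M_{(1)}^{\otimes k}$ is split injective for every $k$.

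The technical heart of the argument is the identity
$$\bar\delta^{k-1}|_{M_{(k)}} = (\bar\delta^{i-1}|_{M_{(i)}} \otimes \bar\delta^{j-1}|_{M_{(j)}}) \circ \pi_{i,j} \circ \bar\delta|_{M_{(k)}},$$
valid for every decomposition $i+j=k$ with $i,j\geq 1$, where $\pi_{i,j}$ denotes the projection onto the $M_{(i)}\otimes M_{(j)}$-summand of the codomain of $\bar\delta|_{M_{(k)}}$. This identity would be derived by a short induction on $k$ from the co-associativity of $\bar\delta$, together with the observation that applying $\bar\delta^{i-1}\otimes\bar\delta^{j-1}$ to $M_{(a)}\otimes M_{(b)}$ vanishes for $(a,b)\neq(i,j)$ by degree reasons.

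For the direction "strictly graded $\Rightarrow$ co-generated by $M_{(1)}$", I would inductively build splittings $s_k: M_{(1)}^{\otimes k} \to M_{(k)}$, starting from $s_0 = u$ and $s_1 = \mathrm{id}$. Given the $s_i$ for $i<k$ and a splitting $\bar s_k$ of $\bar\delta|_{M_{(k)}}$ (furnished by strict gradedness), set
$$s_k := \bar s_k \circ \bigoplus_{i+j=k,\, i,j\geq 1} (s_i \otimes s_j).$$
The key identity and the induction hypothesis $s_i \circ \bar\delta^{i-1}|_{M_{(i)}} = \mathrm{id}$ yield $(s_i\otimes s_j)\circ \bar\delta^{k-1}|_{M_{(k)}} = \pi_{i,j}\circ \bar\delta|_{M_{(k)}}$ for each $(i,j)$, and summing gives $s_k \circ \bar\delta^{k-1}|_{M_{(k)}} = \bar s_k \circ \bar\delta|_{M_{(k)}} = \mathrm{id}_{M_{(k)}}$, so the $s_k$ assemble into the desired splitting of $T^{\leq n}r$. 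Conversely, given a splitting of $T^{\leq n}r$, restriction and projection produce splittings of each $\bar\delta^{k-1}|_{M_{(k)}}$, and the key identity applied with $(i,j) = (1, k-1)$ factors $\bar\delta^{k-1}|_{M_{(k)}}$ through $\pi_{1, k-1} \circ \bar\delta|_{M_{(k)}}$. Split injectivity of the former then forces split injectivity of $\pi_{1,k-1}\circ\bar\delta|_{M_{(k)}}$, and hence of $\bar\delta|_{M_{(k)}}$ itself by composing with the summand inclusion.

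I expect the main technical obstacle to be establishing the key identity. It requires juggling both equivalent formulations of co-associativity,
$$\bar\delta^{k-1} = (\bar\delta \otimes \mathrm{id}^{\otimes (k-2)}) \circ \bar\delta^{k-2} = (\mathrm{id}^{\otimes (k-2)} \otimes \bar\delta) \circ \bar\delta^{k-2},$$
and keeping careful track of which summands of $\bar\delta^{k-2}|_{M_{(k)}}$ survive a further application of $\bar\delta$ for degree reasons; the remainder of the argument is then a clean assembly of split morphisms from this one combinatorial formula.
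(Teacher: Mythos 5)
Your proof is correct, and it shares its skeleton with the paper's: both reduce co-generation to split injectivity of each graded component $\bar\delta^{k-1}|_{M_{(k)}} : M_{(k)} \to M_{(1)}^{\otimes k}$ of the graded morphism $T^{\leq n}r$, and both run an induction on $k$ based on factoring $\bar\delta^{k-1}|_{M_{(k)}}$ through $\bar\delta|_{M_{(k)}}$. The difference is in the inductive step. The paper uses only the single factorization through the summand $M_{(k-1)}\otimes M_{(1)}$ and asserts that, given the induction hypothesis, split injectivity of $\bar\delta|_{M_{(k)}}$ into the full sum $\bigoplus_{i+j=k} M_{(i)}\otimes M_{(j)}$ is equivalent to that of $\bar\delta^{k-1}|_{M_{(k)}}$; the step from a splitting of $\bar\delta|_{M_{(k)}}$ to a splitting of $\pi_{k-1,1}\circ\bar\delta|_{M_{(k)}}$ is left implicit there. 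Your device of using \emph{all} the components at once and summing, so that
$$\Big(\sum_{i+j=k}\iota_{i,j}\circ(s_i\otimes s_j)\Big)\circ\bar\delta^{k-1}|_{M_{(k)}} \;=\; \sum_{i+j=k}\iota_{i,j}\circ\pi_{i,j}\circ\bar\delta|_{M_{(k)}} \;=\; \bar\delta|_{M_{(k)}},$$
sidesteps that issue entirely and yields the retraction $s_k$ explicitly; this is a genuine, if modest, gain in completeness. Your key identity is also correct as stated: for $a+b=k$ with $(a,b)\neq(i,j)$ one has either $a<i$, so the term dies under $\bar\delta^{i-1}|_{M_{(a)}}=0$, or $b<j$, so it dies under $\bar\delta^{j-1}|_{M_{(b)}}=0$, while co-associativity of $\bar\delta$ gives $\bar\delta^{k-1}=(\bar\delta^{i-1}\otimes\bar\delta^{j-1})\circ\bar\delta$ on $\overline M$. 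Two cosmetic remarks: in your definition of $s_k$ the symbol $\bigoplus_{i+j=k}(s_i\otimes s_j)$ should be the sum $\sum_{i+j=k}\iota_{i,j}\circ(s_i\otimes s_j)$ of morphisms with common source $M_{(1)}^{\otimes k}$ (your subsequent computation makes clear this is what you intend); and your converse direction in fact needs no induction, since the single case $(i,j)=(1,k-1)$ already extracts split injectivity of $\bar\delta|_{M_{(k)}}$ from that of $\bar\delta^{k-1}|_{M_{(k)}}$, which matches the paper's observation that this direction is the easy one.
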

\begin{proof}
Note that the co-algebra morphism $M \to T^{\leq n} M_{(1)}$ is graded. 
First it is clear that if either the grading is strict or if $M$ is co-generated by $M_{(1)}$, then $\bar \delta : M_{(2)} \to M_{(1)} \otimes M_{(1)}$ is split injective. So assume that $\bar \delta : M_{(2)} \to M_{(1)} \otimes M_{(1)}$ is split injective and consider for $k\geq 2$ the composition
$$\xymatrix{\bar{\delta}^{k} : \ M_{(k+1)} \ar[r]^{\bar \delta \ \qquad }& \bigoplus_{i=1}^k M_{(k+1-i)}\otimes M_{(i)}  \ar[r] & M_{(k)}\otimes M_{(1)} \ar[rr]^{\bar \delta^{k-1}\otimes \operatorname{id}\quad } && M_{(1)}\otimes \cdots \otimes M_{(1)}.}$$
Assume that, for some $k\geq2$, $\bar \delta^{k-1} : M_{(k)} \to M_{(1)}^{\otimes k}$ is split injective\,; then $\bar \delta^{k} : M_{(k+1)} \to M_{(1)}^{\otimes k+1}$ is split injective if and only if $\bar \delta : M_{(k+1)} \to \bigoplus_{i=1}^k M_{(k+1-i)}\otimes M_{(i)}$ is split injective (note that $\bar{\delta}^{k-1}$ vanishes on $M_{(j)}$ for $j<k$\,; see \S \ref{SS:co-mult}). The proposition therefore follows by induction.
\end{proof}

\subsection{The co-radical filtration}\label{SS:co-rad}
Fix a ring homomorphism $R\to R'$ and a covariant $R$-linear functor $\operatorname{C} : \mathcal C \to R'\operatorname{-mod}$, \emph{e.g.}, $\Hom(N,-)$ for any choice of object $N\in \mathcal C$.
 Let $M = (M,\delta,\epsilon)$ be a  co-algebra object of~$\mathcal C$ equipped with a unit $u:\mathds 1 \to \mathcal C$. 
 Using the reduced co-multiplication~\eqref{E:reduced-comult}, we  introduce the \emph{co-radical filtration}
 $$R_k\operatorname{C}(M) := \ker \big(\bar \delta^k : \operatorname{C}(M) \to \operatorname{C}(M^{\otimes k+1})\big).$$
 In order to avoid confusion with the usual notion of co-radical filtration on co-algebras, we insist that in our setting the co-multiplication on $M$ does not endow the $R'$-module  $\operatorname{C}(M)$ with the structure of a co-algebra, unless $\operatorname{C}$ is a $\otimes$-functor.
 
 Suppose now that $M = (M,\delta,\epsilon,u)$ is a unital graded co-algebra object of~$\mathcal C$ with grading given by $M=M_{(0)} \oplus \cdots \oplus M_{(n)}$. 
  We then define the associated ascending filtration
$$G_k\operatorname{C}(M) :=\operatorname{C}\big(\bigoplus_{i\leq k} M_{(i)}\big) = \bigoplus_{i\leq k} \operatorname{C}(M_{(i)}).$$
 Recall that, in the context of co-algebras, a pointed irreducible graded co-algebra $M$ is strictly graded if and only if the filtration $G_\bullet$ defined by $G_kM := M_{(0)} \oplus \cdots \oplus M_{(k)}$ coincides with the co-radical filtration on $M$~\cite[Lem.~11.2.1]{sweedler}. The following proposition, which is a crucial observation for our work, justifies calling the filtration $R_\bullet$ above the co-radical filtration.

\begin{prop}\label{P:crucial} 
If $M = (M,\delta,\epsilon,u)$ is a unital graded co-algebra object of~$\mathcal C$, then we have for all $k$ the inclusion
$$G_k\operatorname{C}(M) \subseteq R_k\operatorname{C}(M)$$ with equality if the unital grading on $M$ is strict.
\end{prop}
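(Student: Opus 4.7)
The plan is to deduce both statements from the structural fact that, for a unital graded co-algebra $M = M_{(0)}\oplus\cdots\oplus M_{(n)}$, the iterated reduced co-multiplication $\bar\delta^k : M \to M^{\otimes k+1}$ vanishes on the summand $M_{(0)}\oplus\cdots\oplus M_{(k)}$, and—under the strictness assumption—is split injective on the complementary summand $\bigoplus_{j>k} M_{(j)}$. The vanishing on $M_{(0)}\oplus\cdots\oplus M_{(k)}$ is essentially already recorded in \S\ref{SS:co-mult}: the factorization given there gives $\bar\delta^k|_{M_{(j)}} = 0$ for $1\leq j\leq k$, while the identity $\bar\delta^k\circ \bar p = \bar p^{\otimes k+1}\circ \delta^k$ shows that $\bar\delta^k$ annihilates $M_{(0)}$, since $\bar p = \mathrm{id}_M - u\epsilon$ does. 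Since the $R$-linear functor $\operatorname{C}$ preserves finite direct sums, applying it yields $G_k\operatorname{C}(M) = \bigoplus_{i\leq k}\operatorname{C}(M_{(i)}) \subseteq \ker\operatorname{C}(\bar\delta^k) = R_k\operatorname{C}(M)$ at once.

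For the reverse inclusion when the grading is strict, I would exploit the induced direct-sum decomposition $\operatorname{C}(M) = \bigoplus_j \operatorname{C}(M_{(j)})$, together with the fact that $\bar\delta^k$ is a graded morphism of degree $0$ for the natural grading on $M^{\otimes k+1}$. Concretely, $\operatorname{C}(\bar\delta^k)$ sends the summand $\operatorname{C}(M_{(j)})$ into $\operatorname{C}((M^{\otimes k+1})_{(j)})$, and these live in distinct direct summands of $\operatorname{C}(M^{\otimes k+1})$. Hence an element $\alpha = \sum_j \alpha_j \in R_k\operatorname{C}(M)$ must satisfy $\operatorname{C}(\bar\delta^k|_{M_{(j)}})(\alpha_j) = 0$ for every $j$ separately. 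For $j\leq k$ this is automatic by the first paragraph, so the task reduces to showing $\alpha_j = 0$ whenever $j>k$.

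This is the step where strictness enters. By the induction carried out in the proof of Proposition~\ref{P:strictgrading}, strictness guarantees that $\bar\delta^{j-1}|_{M_{(j)}} : M_{(j)}\to M_{(1)}^{\otimes j}$ is split injective in $\mathcal C$ for every $j\geq 1$. I would then combine this with the co-associativity-type identity
\[\bar\delta^{j-1} = \bigl(\bar\delta^{j-1-k}\otimes \mathrm{id}^{\otimes k}\bigr)\circ \bar\delta^k \qquad (j\geq k+1),\]
which follows by an immediate induction from the defining recursion $\bar\delta^m = (\bar\delta\otimes \mathrm{id}^{\otimes(m-1)})\circ \bar\delta^{m-1}$ together with the identity $\bar\delta^k = (\bar p\otimes \mathrm{id}^{\otimes k})\circ \bar\delta^k$. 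Precomposing any splitting of $\bar\delta^{j-1}|_{M_{(j)}}$ with $\bar\delta^{j-1-k}\otimes \mathrm{id}^{\otimes k}$ then exhibits $\bar\delta^k|_{M_{(j)}}$ as split injective in $\mathcal C$; functoriality of $\operatorname{C}$ preserves split injectivity, forcing $\alpha_j = 0$ and thus $\alpha\in G_k\operatorname{C}(M)$.

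I do not foresee a genuine obstacle. The only point demanding a little care is that $\operatorname{C}$ is only assumed to be $R$-linear, not a tensor functor; accordingly, the \emph{grading} on $\operatorname{C}(M^{\otimes k+1})$ invoked in the second paragraph must be obtained from the internal direct-sum decomposition of $M^{\otimes k+1}$ in $\mathcal C$ transported through the direct-sum-preserving functor $\operatorname{C}$, rather than from any putative tensor grading on $\operatorname{C}(M)$ itself.
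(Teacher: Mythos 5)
Your proof is correct and follows essentially the same route as the paper's: the inclusion comes from the vanishing of $\bar\delta^k$ on $M_{(0)}\oplus\cdots\oplus M_{(k)}$ recorded in \S\ref{SS:co-mult}, and the equality under strictness comes from combining the gradedness of $\bar\delta$ (so that the components $\bar\delta^k\alpha_j$ cannot cancel in $\operatorname{C}(M^{\otimes k+1})$) with the split injectivity of $\bar\delta^{j-1}|_{M_{(j)}}$ furnished by Proposition~\ref{P:strictgrading} and the factorization of $\bar\delta^{j-1}$ through $\bar\delta^k$. Your write-up is merely a little more explicit than the paper's about that factorization identity and about why split injectivity survives the $R$-linear (non-tensor) functor $\operatorname{C}$.
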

\begin{proof}
The inclusion follows from the fact that $\bar \delta^k|_{M_{(i)}} = 0$ for $0\leq i \leq k$, which we already saw in \S \ref{SS:co-mult}. Suppose now that the unital grading on $M$ is strict. Consider an element $\alpha \in \operatorname{C}(M)$ and write 
$$\alpha = \alpha_0 + \alpha_1 + \cdots + \alpha_n\,, \qquad \alpha_i \in \operatorname{C}(M_{(i)}).$$ 
Assume that $\alpha \notin G_k\operatorname{C}(M)$, or equivalently that $\alpha_{k'} \neq 0$ for some $k'>k$. Since the unital grading on $M$ is strict, we have $\bar{\delta}^{ k'-1}\alpha_{k'} \neq 0$ and hence $\bar{\delta}^{ k}\alpha_{k'} \neq 0$.
Now, since $\bar \delta$ is graded, the elements $\bar{\delta}^{  k}\alpha_l$ belong to 
$\operatorname{C}((M^{\otimes k+1})_{(l)})$, where the grading on $M^{\otimes k+1}$ is the natural one. Since $M^{\otimes k+1} = \bigoplus_{l\geq 0} (M^{\otimes k+1})_{(l)}$,  we see that $\bar{\delta}^{  k}\alpha_{k'} \neq 0$ implies $\bar{\delta}^{  k}\alpha \neq 0$. 
We have thereby showed that if $\alpha \notin G_k\operatorname{C}(M)$ then $\alpha \notin R_k\operatorname{C}(M)$. 
\end{proof}

\section{Birational motives of varieties as co-algebra objects} \label{S:birat}
	
	\subsection{The $\otimes$-category of pure birational motives}\label{SS:biratmot}
	Let $R$ be a commutative ring. Recall that the (covariant) category $\mathcal M^{\mathrm{eff}}(K)_R$ of \emph{effective Chow motives} over $K$ with $R$-coefficients can be defined as the pseudo-abelian envelope of the category $\mathcal{CSP}(K)_R$ of smooth projective varieties over $K$ with morphisms given by $\Hom_{\mathcal{CSP}(K)_R}(X,Y) := \CH^{\dim Y}(X\times_K Y)\otimes R$ and composition law given by the composition of correspondences. 
	We write $\h(X)$ (or $\h(X)_R$ when we want to make explicit the ring of coefficients) for the Chow motive of $X$ (\emph{i.e.}, for $X$ seen as an object of  $\mathcal M^{\mathrm{eff}}(K)_R$), and given an idempotent correspondence $p \in \CH^{\dim X}(X\times_K X)\otimes R$, we write $(X,p)$ or $p\h(X)$ for $\operatorname{im}(p)$. The \emph{unit motive} is $\mathds 1 := \h (\operatorname{Spec}K)$. The category  $\mathcal M^{\mathrm{eff}}(K)_R$ is a $R$-linear $\otimes$-category, with $\otimes$-unit the unit motive and with tensor product given by $(X,p)\otimes (Y,q) = (X\times_K Y, p\otimes q)$, where $p\otimes q = p_{13}^*p\cdot p_{24}^*q$ with $p_{ij}$ being the projection morphism to the product of the $i$-th and $j$-th factor of  $X\times Y \times X \times Y$.
	
	The diagonal $\Delta_{\PP^1_K}$ of the projective line $\PP^1_K$ decomposes as a sum of two mutually orthogonal idempotents $\Delta_{\PP^1_K} = \PP^1_K\times \{0\} + \{0\}\times \PP^1_K  \ \mbox{in}\ \CH^1(\PP^1_K\times_K \PP^1_K)$ yielding a direct sum decomposition $\h(\PP^1_K) = \mathds{1} \oplus \mathds{L}$, where $\mathds{L} := (\PP^1_K, \{0\}\times \PP^1_K)$ is by definition the \emph{Lefschetz motive}.
	
	The (covariant) category $\mathcal M(K)_R$ of \emph{Chow motives} over $K$ with $R$-coefficients  is then obtained from $\mathcal M^{\mathrm{eff}}(K)_R$ by inverting the $\otimes$-endofunctor $-\otimes \mathds L$. The resulting category $\mathcal M(K)_R$  is then rigid and the functor $\mathcal M^{\mathrm{eff}}(K)_R \to \mathcal M(K)_R$ is fully faithful.
	
	If instead of inverting the Lefschetz motive, one kills the Lefschetz motive in $\mathcal M^{\mathrm{eff}}(K)_R$, one obtains the category $\mathcal M^{\circ}(K)_R$ of \emph{pure birational (Chow) motives} over $K$ with $R$-coefficients, which was introduced by Kahn--Sujatha~\cite{KS}. Precisely (\emph{cf.}~\cite[\S 2.2]{KS}), consider $\mathcal L$ the ideal of $\mathcal M^{\mathrm{eff}}(K)_R$ consisting of those morphisms which factor through some object of the form $P\otimes \mathds L$\,; it is a $\otimes$-ideal called the \emph{Lefschetz ideal}. The category $\mathcal M^{\circ}(K)_R$ is then defined to be the pseudo-abelian envelope of the quotient $\mathcal M^{\mathrm{eff}}(K)_R/\mathcal L$\,; it is a (non-rigid) $R$-linear $\otimes$-category. (Note that conjecturally, the quotient $\mathcal M^{\mathrm{eff}}(K)_R/\mathcal L$ is already pseudo-abelian, so that it should not be necessary to pass to the pseudo-abelian envelope; see \cite[Prop.~4.4.1]{KS}). 
		We write $\ho(X)$ (or $\ho(X)_R$ when we want to make explicit the ring of coefficients) for the birational motive of $X$ (\emph{i.e.}, for $X$ seen as an object of  $\mathcal M^{\mathrm{eff}}(K)_R$),  and	given an idempotent correspondence $\varpi \in \Hom(\ho(X)_R,\ho(X)_R$, we write $(X,\varpi)$ or $\varpi\ho(X)$ for $\operatorname{im}(\varpi)$.
	\medskip
	
	Morphisms of birational motives have an explicit description, namely for $X$ irreducible we have (see~\cite{KS})\,:
\begin{equation}\label{E:biratmorphism}
\operatorname{Hom}(\h^\circ(X)_R,\h^\circ(Y)_R) := \CH_0(Y_{K(X)}) \otimes R = \varinjlim \CH_{\dim X} (U\times_K Y) \otimes R,
\end{equation}
where the limit runs through all non-empty open subsets of $X$.	
Hence the birational motive of $X$ with $R$-coefficients can be roughly thought of as the collection $\{\CH_0(X_L)\otimes R \, \big\vert\, L/K \mbox{ is a field extension}\}$ with morphisms that are ``motivic'', \emph{i.e.}, induced by correspondences.

Furthermore, under the functor $\mathcal M^{\mathrm{eff}}(K)_R \to \mathcal M^{\circ}(K)_R$, a morphism of effective Chow motives $\gamma : \h(X)_R \to \h(Y)_R$, that is, a correspondence in $\CH^{\dim Y}(X\times Y)\otimes R$, induces the morphism of birational motives $\ho(X)\to \ho(Y)$ given by restricting $\gamma$ to the generic point of $X$. In addition, a rational map $f: X\dashrightarrow Y$ induces a well-defined morphism $f_* : \ho(X) \to \ho(Y)$, obtained by restricting to the generic point of $X$ the graph of $f|_U$, where $U\subseteq X$ is a dense open subset over which $f$ is defined. 
	We refer to \cite[\S 2]{Shen} for further explicit calculations involving morphisms of birational motives\,; of particular relevance is the fact that a generically finite rational map $f: X\dashrightarrow Y$ induces a well-defined morphism $f^* : \ho(Y) \to \ho(X)$.
	\medskip
	
	In practice, with rational coefficients, morphisms of birational motives are the same as action of correspondences on zero-cycles.
	The following lemma shows indeed that a morphism of birational motives is uniquely
	determined by its action on zero-cycles, after base-changing to a sufficiently large field.
	\begin{lem}\label{lem:zero}
		Let $X$ and $Y$ be two smooth projective varieties over a field $K$ and fix a universal domain $\Omega$ containing $K$.  Let  $\gamma$ and $\gamma'$ be two morphisms $\h(X) \to \h(Y)$ of Chow motives. Then the
		following two conditions are equivalent\,:
		\begin{enumerate}[(a)]
			\item $\gamma_* = \gamma'_* : \CH_0(X_\Omega)\otimes \Q \to \CH_0(Y_\Omega)\otimes \Q$\,;
			\item  $\gamma = \gamma' : \ho(X)_\Q \to \ho(Y)_\Q$.
		\end{enumerate}  
	\end{lem}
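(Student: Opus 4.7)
My plan is to handle the two implications separately. The direction (b) $\Rightarrow$ (a) is essentially a tautological consequence of~\eqref{E:biratmorphism}: a morphism of birational motives $\ho(X)_\Q \to \ho(Y)_\Q$ is represented by the restriction $\gamma_\xi \in \CH_0(Y_{K(X)})_\Q$ of $\gamma$ to the generic point $\xi$ of $X$, and for any $\Omega$-point $x \in X(\Omega)$ the class $\gamma_*[x]$ is obtained from $\gamma_\xi$ by further pullback along $\operatorname{Spec}\Omega \to X$, so it depends only on $\gamma_\xi$; consequently equality of morphisms of birational motives forces equality of the induced actions on zero-cycles of $X_\Omega$.

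For (a) $\Rightarrow$ (b), setting $\mu := \gamma - \gamma'$ I would aim to show that if $\mu_*$ vanishes on $\CH_0(X_\Omega)_\Q$, then $\mu_\xi = 0$ in $\CH_0(Y_{K(X)})_\Q$. Since $\Omega$ is a universal domain, the function field $K(X)$ embeds into $\Omega$, producing an $\Omega$-point $\widetilde\xi \in X(\Omega)$ that factors through $\xi$. Compatibility of the action of correspondences with base change then reads
$$\mu_*[\widetilde\xi] \;=\; \iota_*(\mu_\xi) \quad \text{in } \CH_0(Y_\Omega)_\Q,$$
where $\iota_* : \CH_0(Y_{K(X)})_\Q \to \CH_0(Y_\Omega)_\Q$ is the base-change map along $K(X)\hookrightarrow \Omega$. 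Hence the whole statement reduces to the injectivity of~$\iota_*$.

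To prove that $\iota_*$ is injective, I would write $\Omega/K(X)$ as a filtered colimit of finitely generated subextensions, each factoring as a purely transcendental extension followed by a finite algebraic one, and handle each type separately. For a finite extension $L/L'$, the projection formula $p_*p^* = [L:L']\cdot \mathrm{id}$ applied to $p : Y_L \to Y_{L'}$ forces $p^*$ to be injective after inverting $[L:L']$, hence a fortiori with $\Q$-coefficients. For a purely transcendental extension $L'(t)/L'$, specializing the constant family $Y_{L'}\times \A^1_{L'}$ at any $L'$-rational point of $\A^1_{L'}$ produces a retraction of the base-change map. Passing to the colimit delivers injectivity of $\iota_*$.

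The only ingredient that is not a pure unwinding of \eqref{E:biratmorphism} is the final injectivity of $\iota_*$, but this is a classical fact about base change on $\CH_0$ with rational coefficients, so I do not expect any serious obstacle; the real content of the lemma is the reduction of (a) $\Rightarrow$ (b) to this injectivity via the choice of a sufficiently generic $\Omega$-point of $X$.
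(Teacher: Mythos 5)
Your treatment of (a) $\Rightarrow$ (b) is correct and is essentially the paper's argument: you choose an $\Omega$-point $\widetilde\xi$ of $X$ lying over the generic point, observe that $(\gamma-\gamma')_*[\widetilde\xi]$ equals the image of the restriction $(\gamma-\gamma')_\xi \in \CH_0(Y_{K(X)})_\Q$ under the base-change map $\iota_*:\CH_0(Y_{K(X)})_\Q \to \CH_0(Y_\Omega)_\Q$, and conclude from the classical injectivity of base change on $\CH_0$ with rational coefficients --- a fact the paper simply quotes and which you correctly sketch via the projection formula for finite extensions and specialization for purely transcendental ones.

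The direction (b) $\Rightarrow$ (a), however, is not the tautology you claim, and as written there is a gap. The assertion that ``$\gamma_*[x]$ is obtained from $\gamma_\xi$ by further pullback along $\operatorname{Spec}\Omega \to X$'' only makes sense when the $\Omega$-point $x$ dominates $X$, i.e.\ factors through the generic point; for a point supported over a proper closed subvariety of $X$ (for instance a $K$-rational point) there is no such pullback, and the claim that $\gamma_*[x]$ depends only on $\gamma_\xi$ is exactly the content of the implication rather than a consequence of \eqref{E:biratmorphism}. The missing step is the following: since $\CH_0(Y_{K(X)})_\Q = \varinjlim_U \CH_{\dim X}(U\times_K Y)_\Q$, the vanishing of $(\gamma-\gamma')_\xi$ means --- by the localization sequence, or by Bloch--Srinivas as the paper invokes --- that a nonzero multiple of $\gamma-\gamma'$ is rationally equivalent to a correspondence supported on $D\times_K Y$ for some divisor $D\subset X$; such a correspondence annihilates $\CH_0(X_\Omega)\otimes\Q$ because every zero-cycle on the smooth projective variety $X_\Omega$ is rationally equivalent to one supported off $D_\Omega$. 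Once this moving argument is inserted, your proof is complete and agrees with the paper's.
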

	\begin{proof} 
	 First, recall the fact that if $\gamma \in \operatorname{Hom}(\h(X),\h(Y)):= \CH_{\dim X}(X\times_KY)$, then the restriction of $\gamma$ to the generic point $\eta_X$ of $X$, \emph{i.e.}, the image of $\gamma$ under the map $\CH^{\dim Y}(X\times_KY) \to \varinjlim \CH_{\dim X}(U\times_K Y) = \CH_0(Y_{K(X)})$, coincides with $(\gamma_{K(X)})_*[\eta_X]$.
		
		Assuming $(b)$, we have by~\eqref{E:biratmorphism} and by Bloch--Srinivas \cite{BS} that a non-zero multiple of $\gamma - \gamma'$ is supported on $D\times_k Y$ for some divisor $D\subset X$. It follows that $\gamma - \gamma'$ acts as zero on $ \CH_0(X_\Omega)\otimes \Q$.
		
	Conversely, since the base change map $\CH^*(X)\otimes \Q \to \CH^*(X_L) \otimes \Q$ is injective for all field extensions $L/K$ and all schemes $X$ of finite type over $K$, it follows from $(a)$ that $\gamma - \gamma'$ acts as zero on the generic point of $X$. It follows that $\gamma - \gamma'$ induces the zero morphism  $\ho(X)_\Q \to \ho(Y)_\Q$.
	\end{proof}

As a result of general interest, we have the following analogue of \cite[Thm.~3.18]{Vial-Tohoku} concerned with Chow motives\,:

	\begin{lem}\label{lem:birat-surj}
	Let  $M^\circ$ and $N^\circ$ in $\mathcal{M}^\circ(K)_\Q$ be two birational motives over a field $K$ and fix a universal domain $\Omega$ containing $K$.  Let  $\gamma : M^\circ \to N^\circ$ be a morphism and let $(\gamma_{\Omega})_* : \CH_0(M^\circ_\Omega)_\Q \to  \CH_0(N^\circ_\Omega)_\Q$ be the induced morphism. Then\,:
	\begin{enumerate}[(a)]
		\item $\gamma$ is split surjective if and only if $(\gamma_{\Omega})_*$ is surjective\,;
		\item  $\gamma$ is an isomorphism if and only if $(\gamma_{\Omega})_*$ is bijective.
	\end{enumerate}  
\end{lem}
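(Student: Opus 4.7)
The plan is to reduce both parts to a descent statement on zero-cycles, following the pattern of the analogous result \cite[Thm.~3.18]{Vial-Tohoku} for Chow motives. Write $M^\circ = \varpi\ho(X)$ and $N^\circ = \varpi'\ho(Y)$ with $X,Y$ smooth projective and $Y$ irreducible, and represent $\gamma$ by a correspondence compatible with $\varpi$ and $\varpi'$. The ``only if'' directions in both (a) and (b) are immediate: given a section or inverse $\gamma'$ of $\gamma$, its base-change to $\Omega$ gives a section or inverse for $(\gamma_\Omega)_*$.

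For the ``if'' direction of (a), by~\eqref{E:biratmorphism} a section of $\gamma$ amounts to a class $\beta \in \CH_0(M^\circ_{K(Y)})_\Q$ with $(\gamma_{K(Y)})_*\beta = \alpha$, where $\alpha \in \CH_0(N^\circ_{K(Y)})_\Q$ is the class representing $\mathrm{id}_{N^\circ}$ under~\eqref{E:biratmorphism}. Pick any embedding $K(Y) \hookrightarrow \Omega$, which exists since $\Omega$ has infinite transcendence degree over $K$. The hypothesis supplies $\tilde\beta \in \CH_0(M^\circ_\Omega)_\Q$ with $(\gamma_\Omega)_*\tilde\beta = \alpha_\Omega$. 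Since $\CH_0(M^\circ_\Omega)_\Q = \varinjlim_L \CH_0(M^\circ_L)_\Q$ over the filtered system of subextensions $K(Y) \subseteq L \subseteq \Omega$ finitely generated over $K(Y)$, a spreading-out argument produces such an $L$ and a $\beta_L \in \CH_0(M^\circ_L)_\Q$ satisfying $(\gamma_L)_*\beta_L = \alpha_L$.

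The core step is to descend $\beta_L$ from $L$ back to $K(Y)$. Factor $L/K(Y)$ as a tower of one-step extensions, each either purely transcendental of degree one or finite. For a purely transcendental step $E \subseteq E(t)$, realize $E(t) = \kappa(\PP^1_E)$, spread $\beta_{E(t)}$ to a cycle on $U \times_E X$ for some dense open $U \subseteq \PP^1_E$, and specialize at any $E$-rational point of $U$; such points exist because $E$ is infinite, and compatibility of the action of $\gamma$ with specialization, combined with the fact that $\alpha$ is constant along the family, preserves the identity $(\gamma)_*\beta = \alpha$. For a finite step $E \subseteq E'$, the rescaled trace $\tfrac{1}{[E':E]}\operatorname{Tr}_{E'/E}$ descends $\beta_{E'}$ to an element over $E$ satisfying the same equation (any purely inseparable subextension is harmless after tensoring with $\Q$, since Frobenius pullback acts as a power of $p$ on $\CH_0 \otimes \Q$). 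Iterating yields the desired $\beta \in \CH_0(M^\circ_{K(Y)})_\Q$.

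Part (b) then follows formally from (a). Assume $(\gamma_\Omega)_*$ is bijective. By (a), there is $\gamma': N^\circ \to M^\circ$ with $\gamma\gamma' = \mathrm{id}_{N^\circ}$; then $(\gamma'_\Omega)_*$ is the two-sided inverse of $(\gamma_\Omega)_*$, in particular surjective, so (a) applied to $\gamma'$ produces $\gamma'': M^\circ \to N^\circ$ with $\gamma'\gamma'' = \mathrm{id}_{M^\circ}$. Computing $\gamma'' = (\gamma\gamma')\gamma'' = \gamma(\gamma'\gamma'') = \gamma$ shows that $\gamma'\gamma = \mathrm{id}_{M^\circ}$, whence $\gamma$ is an isomorphism with inverse $\gamma'$. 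The main obstacle is the descent step in (a)---spreading-out coupled with specialization and trace---but it is precisely the mechanism already used in the Chow-motivic analogue.
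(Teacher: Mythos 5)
Your argument is correct and follows essentially the same route as the paper: both hinge on descending surjectivity of $(\gamma)_*$ from $\Omega$ to $K(Y)$ and then reading a preimage of the identity class of $N^\circ$ in $\CH_0(N^\circ_{K(Y)})$ as a splitting of $\gamma$ via the description~\eqref{E:biratmorphism} of morphisms of birational motives. The only differences are cosmetic: you re-prove the descent step (spreading out, specialization at rational points, trace for finite extensions) where the paper simply cites \cite[Lem.~3.1]{Vial-Tohoku}, and you deduce (b) by the formal two-sided-inverse argument where the paper invokes Lemma~\ref{lem:zero}.
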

\begin{proof} 
Item (b) follows from (a) and Lemma~\ref{lem:zero}. The ``only if''  part of (a) is clear, so suppose  $(\gamma_{\Omega})_*$ is surjective. In that case, note that $(\gamma_{K(Y)})_*$ is also surjective (\emph{e.g.}, \cite[Lem.~3.1]{Vial-Tohoku}), and denoting $N^\circ = (Y,\varpi)$, there exists thus $\rho \in  \CH_0(M^\circ_{K(Y)}) $ such that $\varpi=(\gamma_{K(Y)})_*\rho$. 
On the other hand $(\gamma_{K(Y)})_*\rho = \gamma\circ \rho$, so that $\rho \circ \varpi$ provides a splitting to $\gamma$.
\end{proof}

	\subsection{The co-algebra structure on the birational motive of varieties}\label{SS:co-alg-biratmot}
	Let $X$ be a smooth projective variety over a field~$K$.
Recall that the pullback along the diagonal embedding $\delta : X \to X\times_K X$, together with the $\otimes$-structure on the contravariant category of Chow motives ($\h(X\times X) = \h(X)\otimes \h(X)$), provides a commutative algebra structure on the Chow motive $\h(X)$ of~$X$
	 (\emph{e.g.}, \cite[Ex.~4.1.4.1.3]{andre} or \cite[\S 2.1]{FV-K3}),
	 with unit induced by the structure morphism $\epsilon: X \to \operatorname{Spec}K$.
Working covariantly instead, we have that  the pushforward along the diagonal embedding $\delta : X \to X\times_K X$ endows the covariant motive of $X$ with the structure of a co-commutative co-algebra object. Passing to the birational motive, we obtain that  the pushforward along the diagonal embedding $\delta : X \to X\times_K X$, together with the $\otimes$-structure on the category of birational motives ($\ho(X\times_K X) = \ho(X)\otimes \ho(X)$),  provides a \emph{co-commutative co-algebra structure} on the birational motive $\ho(X)$ of $X$, with  co-unit morphism $\epsilon : \ho(X) \to \mathds 1$ (also called the \emph{degree morphism}) induced by pushing forward along the structure morphism $\epsilon: X \to \operatorname{Spec} K$.  It is indeed immediate to check that 
	$(\mathrm{id}\times
	\epsilon)\circ \delta=\mathrm{id}=(\epsilon\times \mathrm{id})\circ \delta : X\to X$,
	$(\delta\times \mathrm{id})\circ\delta= (\mathrm{id}\times \delta)\circ \delta : X\to X \times X \times X$, as well as $\delta=\tau\circ \delta : X \to X\times X$, where $\tau : X\times X \to X\times X$ is the morphism permuting the two factors.

	In the same way that the pullback along a morphism of smooth projective
	varieties provides a morphism between their motives as algebra objects, a
	rational map $f:X \dashrightarrow Y$ induces a morphism $f_* : \h^\circ (X) \to
	\ho(Y)$ as co-algebra objects. This follows immediately from the commutativity
	of the diagram 
	$$\xymatrix{X \ar[rr]^{\delta_X} \ar@{-->}[d]_f & &X\times X
		\ar@{-->}[d]^{f\times f} \\
		Y \ar[rr]^{\delta_Y} && Y\times Y.
	}$$

\subsection{Co-algebra structure on birational motives and zero-cycles}\label{SS:0cycle}
Let $X$ be a smooth projective variety over a field $K$.
The co-multiplication morphism $\delta : \ho(X) \to \ho(X \times_K X) = \ho(X)\otimes \ho(X)$ does not endow $\CH_0(X) = \Hom (\mathds 1, \ho(X))$ with the structure of a co-algebra, but only provides a map
$$\delta_* : \CH_0(X) \to \CH_0(X\times_K X).$$
Indeed, given two schemes $X$ and $Y$ of finite type over $K$,  the natural map given by exterior product~\cite[\S 1.10]{fulton}\,:
\begin{equation}\label{E:exterior}
\CH_0(X) \otimes \CH_0(Y) \to \CH_0(X\times_K Y), \quad \alpha\otimes \beta \mapsto \alpha \times \beta.
\end{equation}
 can fail to be injective or surjective, even with rational coefficients. In other words, the additive functor $M\mapsto \Hom(\mathds 1,M) = \CH_0(M)$ from birational motives to abelian groups is \emph{not} a $\otimes$-functor --- this explains why we work with birational motives rather than merely with zero-cycles.
 
 To see that \eqref{E:exterior} is neither injective nor surjective in general, consider for example an elliptic curve $E$ over $\Q$\,; by the Mordell--Weil theorem it has finite rank, while its base-change to $\bar \Q$ has infinite rank. We therefore see that there exists a finite field extension $L/\Q$ such that $\CH_0(L)\otimes \CH_0(E) \to \CH_0(E_L)$ is not surjective, even with rational coefficients. Assume now that $E$ has a $K$-point $p$ such that $[p]-[0]$ is non-torsion in $\CH_0(E)$, then $([p]-[0]) \times ([p]-[0])$ is zero
 in $\CH_0(E\times_K E)$ (this is classical, but see also Theorem~\ref{thm:abelian} below for a generalization to abelian varieties of any dimension), showing that $\CH_0(E)\otimes \CH_0(E) \to \CH_0(E\times_K E)$ is not injective, even with rational coefficients. 
 However, we note that if $K$ is algebraically closed, then the exterior product map~\eqref{E:exterior} is surjective\,; indeed, any zero-cycle $\gamma \in   \CH_0(X\times_K Y)$ is then a linear combination of cycle classes of the form $[x]\times[y]$ for $x\in X(K)$ and $y\in Y(K)$.

\subsection{Correspondences and co-algebra structures on birational motives}\label{SS:cor-coalg}
From now on, our coefficient ring $R$ will be the field of rational numbers $\Q$ and Chow groups (and motives) will be understood to be with rational coefficients.

	Let $\ho(X)$ and $\ho(Y)$ be two birational motives of smooth projective
	varieties. Assume that $\ho(X)$ can be realized as a direct summand of $\ho(Y)$.
	The following proposition gives a criterion for the co-algebra structure on
	$\ho(X)$ to be determined by the co-algebra structure on $\ho(Y)$.
	
	\begin{prop}\label{prop:coalg}
		Let $X$ and $Y$ be smooth projective varieties of same dimension $d$ over a
		field~$K$ and fix a universal domain $\Omega$ containing $K$. Assume that there exist a projective variety $\Gamma$  of same
		dimension $d$ together with generically finite morphisms
		$$\xymatrix{ \Gamma \ar[r]^\phi \ar[d]_\psi & X \\
			Y
		}$$ 
		such that 
		one of the following equivalent conditions holds\,:
		\begin{enumerate}[(i)]
			\item $\phi_*[p] = \phi_*[q]$  in $\CH_0(X_\Omega)$, for any two general
			points $p$ and $q$ in $\Gamma(\Omega)$ lying on the same fiber of $\psi$.
			\item  	$\phi_* \psi^*\psi_* \alpha = \deg(\psi)\, \phi_*\alpha$ in
			$\CH_0(X_\Omega)$, for any zero-cycle $\alpha \in \CH_0(\Gamma_\Omega)$.
			\setcounter{1}{\value{enumi}}
		\end{enumerate}
		Then 
		\begin{enumerate}[(a)]
			\item $\gamma := \frac{1}{\deg \phi}\,\psi_*\phi^* : \ho(X) \to \ho(Y)$ is
			split injective\,;
			\item $\gamma' := \frac{1}{\deg \psi}\,\phi_*\psi^* : \ho(Y) \to \ho(X)$ is
			split surjective and 
			$\gamma'\circ
			\gamma = \mathrm{id}_{\ho(X)}$\,;
			\item 	the diagram
			$$\xymatrix{\ho(X) \ar[rr]^{\delta_X\quad }  \ar[d]_\gamma && \ho(X)\otimes
				\ho(X) \\
				\ho(Y) \ar[rr]^{\delta_Y \quad } && \ho(Y)\otimes \ho(Y)
				\ar[u]_{\gamma'\otimes \gamma'}		
			}$$
			commutes.
		\end{enumerate}
		In particular, if in addition $\psi_*[p] = \psi_*[q]$  in $\CH_0(Y_\Omega)$ for
		any two general points $p$ and $q$ in $\Gamma(\Omega)$ lying on the same fiber
		of $\phi$, then $\gamma : \ho(X) \to \ho(Y)$ is an isomorphism of co-algebra
		objects (with inverse~$\gamma'$).
	\end{prop}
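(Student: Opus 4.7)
My plan is to verify each claim by computing the action on zero-cycles over the universal domain $\Omega$ and then lifting the resulting identities to morphisms of birational motives via Lemma~\ref{lem:zero}. The only non-formal input is condition (i)/(ii), combined with the projection formula for the generically finite maps $\phi$ and $\psi$. The equivalence (i)$\Leftrightarrow$(ii) is a bookkeeping exercise\,: for general $p\in \Gamma(\Omega)$ one has $\psi^*\psi_*[p] = \sum_j [q_j]$ summed over the $\psi$-fiber through $p$, so (i) immediately gives (ii) for $\alpha = [p]$ (and hence for all $\alpha$ by linearity), and conversely applying (ii) to each $[q_k]$ in the fiber gives $\sum_j \phi_*[q_j] = \deg(\psi)\,\phi_*[q_k]$ for each $k$, forcing all $\phi_*[q_k]$ to coincide. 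For (a) and (b), Lemma~\ref{lem:zero} reduces matters to the identity $(\gamma'\circ\gamma)_*[x] = [x]$ for general $x\in X(\Omega)$, which is immediate from
\[(\gamma'\circ\gamma)_*[x] = \tfrac{1}{\deg\phi\,\deg\psi}\,\phi_*\psi^*\psi_*\phi^*[x]\]
together with (ii) applied to $\alpha = \phi^*[x]$ and the projection formula $\phi_*\phi^*[x] = \deg(\phi)\,[x]$.

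For (c), Lemma~\ref{lem:zero} again reduces matters to checking that $(\gamma'\otimes \gamma')_*(\delta_Y)_*\gamma_*[x] = (\delta_X)_*[x] = [x]\times [x]$ for $x\in X(\Omega)$ general. Writing $\phi^*[x] = \sum_i [q_i]$ over the $\phi$-fiber, one computes $\gamma_*[x] = \tfrac{1}{\deg\phi}\sum_i[\psi(q_i)]$ and hence $(\delta_Y)_*\gamma_*[x] = \tfrac{1}{\deg\phi}\sum_i [\psi(q_i)]\times[\psi(q_i)]$. The crucial point is the evaluation $\gamma'_*[\psi(q_i)] = [x]$\,: the pullback $\psi^*[\psi(q_i)]$ is the cycle class of the $\psi$-fiber through $q_i$, and condition (i) collapses its pushforward under $\phi$ to $\deg(\psi)\,\phi_*[q_i] = \deg(\psi)\,[x]$. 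Substituting and using that $(\gamma'\otimes\gamma')_*$ respects exterior products yields $(\gamma'\otimes\gamma')_*(\delta_Y)_*\gamma_*[x] = [x]\times[x]$, as required.

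The final isomorphism claim then follows by symmetry\,: the extra hypothesis is exactly condition (i) with the roles of $\phi$ and $\psi$ exchanged, which by the same argument gives $\gamma\circ\gamma' = \mathrm{id}_{\ho(Y)}$\,; together with (c) this makes $\gamma$ an isomorphism of co-algebra objects. No individual step is hard\,; the main care is needed in (c), where two distinct fiber sums interact---one under $\phi$ when expanding $\gamma_*[x]$ and one under $\psi$ when evaluating $\gamma'_*$ on each $[\psi(q_i)]$---and condition (i) is precisely the cycle-theoretic input that collapses the second sum to make the two fit together into the diagonal.
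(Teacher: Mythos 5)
Your proposal is correct and follows essentially the same route as the paper: the paper proves (c) by the correspondence-level identity $(\phi_*\psi^*\otimes\phi_*\psi^*)\circ\delta_Y\circ\psi_*\phi^* = \deg(\psi)^2\deg(\phi)\,\delta_X$, using $\delta_Y\circ\psi_*=(\psi_*\otimes\psi_*)\circ\delta_\Gamma$, condition (ii), and the projection formula, which is exactly your pointwise computation on $[x]$ read at the level of morphisms of birational motives. The only cosmetic difference is that you unpack everything on classes of (very) general points of $X_\Omega$ and invoke Lemma~\ref{lem:zero} to lift back, whereas the paper manipulates the correspondences directly; the key input (collapsing $\phi_*\psi^*\psi_*$ to $\deg(\psi)\,\phi_*$ via (i)/(ii)) is identical.
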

	\begin{proof}
		First we explain why assumptions $(i)$ and $(ii)$ are equivalent.
		Since $\psi$ is generically finite and by generic flatness, for $p$ a general point on $Y_\Omega$ we have that $\psi^*\psi_*[p]$ is a multiple of $[p_1] + \cdots + [p_n]$ where $\{p_1,\ldots,p_n\} = \psi^{-1}\psi(p)$ and where $n = \deg(\psi)$. Assuming $(i)$, we get $\phi_*\psi^*\psi_*[p] = \deg(\psi) \phi_*[p]$.
		 On the other hand, if $p$ and $q$ are two general points on the same fiber of $\psi$, we have by proper pushforward $\psi_*[p] = \psi_*[q]$.
		 Assuming~$(ii)$, we get $\deg(\psi)\, \phi_*[p] = \phi_* \psi^*\psi_* [p] =   \phi_* \psi^*\psi_* [q] = \deg(\psi)\, \phi_*[q]$.
		
		Let us now proceed to show that assumptions $(i)$ and $(ii)$ imply $(a)$, $(b)$ and $(c)$.
		Items $(a)$ and $(b)$ simply follow from $(c)$ by projecting on the first factor\,; or directly from $(ii)$ and from the projection formula\,: $$
		\phi_*\psi^*\psi_*\phi^* =
		\deg(\psi)\, \phi_*\phi^* = \deg(\psi)\deg(\phi)\, \mathrm{id}_{\ho(X)}.$$
		Item $(c)$ follows from
		\begin{align*}
		(\phi_*\psi^*
		\otimes \phi_*\psi^*)\circ \delta_Y \circ \psi_*\phi^*
		&= (\phi_*\psi^*\otimes \phi_*\psi^*)\circ (\psi_*\otimes \psi_*) \circ
		\delta_\Gamma \circ\phi^*\\
		&=  \deg(\psi)^2\, (\phi_*\otimes \phi_*) \circ \delta_\Gamma \circ\phi^*\\
		&=  \deg(\psi)^2\, \delta_X \circ\phi_*\phi^*\\
		&=  \deg(\psi)^2\deg(\phi)\, \delta_X.
		\end{align*} 
		Here, the second equality uses $(ii)$, the last equality uses the projection formula, and the first
		and third equalities use the compatibility of the co-algebra structure on
		birational motives of smooth projective varieties and pushforwards along
		rational maps.
	\end{proof}
	
	\begin{rmk}\label{R:biratGamma}
		We note that if $\pi: \widetilde{\Gamma}\to \Gamma$ is a birational morphism of projective varieties, \emph{e.g.}, a desingularization, then the equivalent assumptions $(i)$ and $(ii)$ of Proposition~\ref{prop:coalg} are satisfied for $\phi\circ \pi$ and $\psi\circ \pi$ if they are satisfied for $\phi$ and $\psi$. 
	\end{rmk}

	\subsection{The birational motive of finite quotient varieties}\label{SS:quotient}
	
	Let $X$ be a smooth projective variety over a field $K$ and let $G$ be a finite group acting on $X$. Due to the fact~\cite[Ex.~1.7.6]{fulton} that $\CH^*(X/G)\otimes \Q = (\CH^*(X)\otimes \Q)^G$ and due to the fact that the formalism of correspondences carries through with rational coefficients to finite quotients of smooth projective varieties,  all the formalism developed so far in this section carries through, so long as one works with rational coefficients, to finite quotients of smooth projective varieties. 
	This will prove important in our examples, since we will take as a birational model for~\eqref{hilb} the symmetric quotient $S^{(n)} := S^n/\mathfrak S_n$ and as a birational model for~\eqref{kum} a certain quotient $A^n/\mathfrak{S}_{n+1}$ (see the proof of Theorem~\ref{thm:MSD}).

	\section{The birational motive of moduli spaces of objects on K3 surfaces}
	\label{S:moduli}
	
Let $S$ be a K3 surface over a field $K$, let $[o]$ be the numerical class of a point $o\in S(\bar K)$ and let $v = (v_0,v_2,v_4) \in \Z[S] \oplus \operatorname{NS}(S) \oplus \Z[o]$ be a primitive 
class with non-negative Mukai self-intersection $v^2 := -v_0v_4 + v_2v_2 - v_4v_0 \ge 0$. For a generic stability condition $\sigma \in \operatorname{Stab}^\dagger (S)$ with respect to~$v$ (see \cite{Bridgeland}), we denote $\operatorname{M}_\sigma(v)$ the moduli space of $\sigma$-stable objects, in the bounded derived category $D^b(S)$ of coherent sheaves on $S$, with Mukai vector $v$\,; $\operatorname{M}_\sigma(v)$ is a smooth projective hyper-K\"ahler variety of dimension $2n = v^2+2$.

In the case of moduli spaces $M_H(v)$ of Gieseker-stable sheaves, with Mukai vector $v$, with respect to a generic polarization $H$ on the K3 surface $S$ (which are special cases of moduli spaces of $\sigma$-stable objects in $D^b(S)$), Markman~\cite[\S 3.4]{MarkmanJAG} has established that there exists an isomorphism between the cohomology algebras 
of $M_H(v)$ and $\operatorname{Hilb}^n(S)$
 that in addition preserves the Hodge structures. Recently, Frei~\cite{Frei} extended Markman's result to positive characteristic, with $\ell$-adic cohomology with its Galois structure, in place of singular cohomology with its Hodge structure.
 
 Given the above and Beauville's splitting principle, it is natural to ask whether the Chow motives of $\operatorname{M}_\sigma(v)$ and $\operatorname{Hilb}^n(S)$ are isomorphic as algebra objects. The following theorem gives evidence by establishing the above in the context of birational motives.
	
	\begin{thm}\label{thm:moduli}
		Let $S$ be a K3 surface and let $M_{\sigma}(v)$ be a moduli space of stable
		objects on~$S$.
		If $2n$ denotes the dimension of $M_{\sigma}(v)$, then the birational Chow motives
		$\ho(M_{\sigma}(v))$ and $\ho(\mathrm{Hilb}^n(S))$ are isomorphic as co-algebra objects.	
	\end{thm}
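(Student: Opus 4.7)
The plan is to apply Proposition~\ref{prop:coalg} with $X = \operatorname{Hilb}^n(S)$ and $Y = M_\sigma(v)$. Since the birational motive is a birational invariant and $\operatorname{Hilb}^n(S)$ is birational to the symmetric product $S^{(n)} := S^n/\mathfrak{S}_n$, it suffices (working with rational coefficients, as in \S \ref{SS:quotient}) to exhibit a projective variety $\Gamma$ of dimension $2n$ together with generically finite surjective morphisms $\phi:\Gamma \to S^{(n)}$ and $\psi:\Gamma\to M_\sigma(v)$ satisfying the symmetric compatibility condition~(i) of Proposition~\ref{prop:coalg}.

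The natural candidate for $\Gamma$ is built from the second Chern class of the universal stable object $\mathcal{E}$ on $M_\sigma(v) \times S$. Concretely, I would take $\Gamma$ to be (a desingularization of) the closure in $M_\sigma(v)\times S^n$ of the locus of pairs $\bigl([\mathcal E], (x_1,\ldots,x_n)\bigr)$ for which $[x_1]+\cdots+[x_n]$ represents $c_2(\mathcal E)$ modulo an appropriate multiple of the Beauville--Voisin class $[o_S]$, and then pass to the quotient $\bar\Gamma := \Gamma/\mathfrak{S}_n\subset M_\sigma(v) \times S^{(n)}$, equipped with its two projections $\phi,\psi$. The existence of sufficiently many such $n$-tuples for generic $\mathcal E$, as well as the resulting dimension count $\dim \bar\Gamma = 2n$, is essentially the geometric content of the work of O'Grady~\cite{OG-moduli} and Marian--Zhao~\cite{MZ}.

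The heart of the argument is the verification of Proposition~\ref{prop:coalg}~(i) in both directions. For a general fiber of $\psi:\bar\Gamma \to M_\sigma(v)$, two tuples $(x_i)$ and $(x'_i)$ satisfy $\sum[x_i] = \sum[x'_i] = c_2(\mathcal E) - \lambda[o_S]$ in $\CH_0(S)$ by construction, and the Beauville--Voisin relation~\cite{BV} on $S$ then ensures that they define the same class in $\CH_0(S^{(n)})$\,; this yields condition~(i) for $\phi$. For a general fiber of $\phi: \bar\Gamma\to S^{(n)}$, the two stable objects $\mathcal E,\mathcal E'$ share the same $c_2$-class modulo $[o_S]$, hence by the Shen--Yin--Zhao~\cite{SYZ}/Marian--Zhao~\cite{MZ}/Li--Zhang~\cite{Li-Zhang} chain of results one has $[\mathcal E] = [\mathcal E']$ in $\CH_0(M_\sigma(v))$, giving condition~(i) for $\psi$.

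The main obstacle is the verification that both projections $\phi,\psi$ are generically finite, which amounts to saying that for generic $[\mathcal E] \in M_\sigma(v)$ the $0$-cycle $c_2(\mathcal E)$ admits only finitely many presentations (modulo $\mathfrak{S}_n$) as a sum of $n$ points plus a multiple of $[o_S]$, and conversely that finitely many stable objects share a given $c_2$-class of this form\,; this is precisely the geometric input underlying O'Grady's and Shen--Yin--Zhao's work. Once this is established, Proposition~\ref{prop:coalg} produces the desired isomorphism of co-algebra objects $\ho(M_\sigma(v)) \cong \ho(S^{(n)}) \cong \ho(\operatorname{Hilb}^n(S))$, completing the proof.
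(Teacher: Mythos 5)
Your proposal follows essentially the same route as the paper: the same incidence correspondence $R\subset \operatorname{M}_\sigma(v)\times \mathrm{Hilb}^n(S)$ (you phrase it via $S^{(n)}$, which is birational to $\mathrm{Hilb}^n(S)$ and hence harmless), the same inputs from Marian--Zhao and Shen--Yin--Zhao/O'Grady for the constancy of cycle classes along fibers and for dominance of both projections, and the same appeal to Proposition~\ref{prop:coalg} to conclude.

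One point where your argument is shakier than it needs to be: you present generic finiteness of $\phi$ and $\psi$ as ``the main obstacle'' and claim it amounts to the finiteness of the fibers of the incidence itself (finitely many presentations of $c_2(\mathcal E)$, and finitely many objects sharing a given $c_2$-class). That finiteness is stronger than what is required and is not clear in general --- the fibers of the incidence over $\mathrm{Hilb}^n(S)$ are constant-cycle subvarieties of $\operatorname{M}_\sigma(v)$ which can well be positive-dimensional. The paper sidesteps this entirely: one picks a component $R_0$ dominating both $2n$-dimensional factors and replaces it by a general linear section of the appropriate codimension, which is then automatically generically finite over both; condition~(i) of Proposition~\ref{prop:coalg} is preserved under this restriction. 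With that standard reduction in place of your finiteness claim, your proof is complete and coincides with the paper's.
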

	\begin{proof}
		 As in \cite[\S	2.2]{SYZ}, we consider the incidence
		$$R:= \{(\mathcal{E},\xi) \in \operatorname{M}_\sigma(v)\times \mathrm{Hilb}^n(S) ~\big\vert~
		c_2(\mathcal{E}) = [\mathrm{Supp}(\xi)] +c\, [o_S] \ \mbox{in}\ \CH_0(S)\},$$ where
		$\mathrm{Supp}(\xi)$ is the support of $\xi$ and $c\in \Z$ is a constant
		determined by the Mukai vector $v$, and let $p_{\operatorname{M}_\sigma(v)} : R  \to \operatorname{M}_\sigma(v)$ and
		$p_{\mathrm{Hilb}^n(S)} : R \to \mathrm{Hilb}^n(S)$ be the natural projections.
		By \cite{MZ}, all points on the same fiber of $p_{\operatorname{M}_\sigma(v)}$ have the same class in
		$\CH_0(\mathrm{Hilb}^n(S))$ and all points on the same fiber of
		$p_{\mathrm{Hilb}^n(S)}$ have the same class in $\CH_0(\operatorname{M}_\sigma(v))$. Moreover
		by \cite[Thm.~0.1]{SYZ} $p_{\operatorname{M}_\sigma(v)}$ is dominant, while by the arguments in
		\cite[Prop.~1.3]{OG-moduli} $p_{\mathrm{Hilb}^n(S)}$ is dominant\,; in
		fact there exists a component $R_0 \subseteq R$ that dominates both factors
		$\operatorname{M}_\sigma(v)$ and $\mathrm{Hilb}^n(S)$. The varieties $\operatorname{M}_\sigma(v)$ and
		$\mathrm{Hilb}^n(S)$ have same dimension and, up to restricting to a linear
		section, we can further assume that $R_0$ is generically finite over both $\operatorname{M}_\sigma(v)$
		and $\mathrm{Hilb}^n(S)$. By applying Proposition~\ref{prop:coalg}, we obtain an
		isomorphism of birational motives 
		$$\ho(\operatorname{M}_\sigma(v))  \stackrel{\sim}{\longrightarrow}
		\ho(\mathrm{Hilb}^n(S)),$$
		as co-algebra objects.
	\end{proof}

	\section{Co-multiplicative birational Chow--K\"unneth decompositions}
	\label{S:MBCK}
	
In this section, we start by recalling the notion of \emph{birational Chow--K\"unneth decomposition} for the birational motive $\ho(X)$ of a smooth projective variety $X$. Such a decomposition is then said to be co-multiplicative if it defines a unital grading on $\ho(X)$ considered as a co-algebra object. (In the next section, we will in fact see that in case $X$ is a hyper-K\"ahler variety, then any unital grading on $\ho(X)$ is a co-multiplicative birational Chow--K\"unneth decomposition.)
The main result is Theorem~\ref{thm:BMCK} where we construct explicit such decompositions in case $X$ is one of \eqref{hilb}, \eqref{moduli}, \eqref{kum} or \eqref{fano}.
	
	\subsection{Birational Chow--K\"unneth decompositions} \label{SS:BCK}
	The following definitions are borrowed from Shen~\cite[\S 3]{Shen}.
	Fix a Weil cohomology theory $\HH^\bullet$ for smooth projective varieties defined over $K$\,; \emph{e.g.}, $\ell$-adic cohomology for $\ell \neq \operatorname{char}(K)$, or  Betti cohomology if $K\subseteq \C$. 
	For a smooth projective variety $X$ over $K$, we then define its transcendental cohomology to be the quotient
	$$\HH_{\tr}^k(X) := \HH^k(X)/ \NN^1\HH^k(X),$$
	where $\NN^\bullet$ denotes	the coniveau filtration\,:
	$$\NN^r \HH^k(X) := \sum_{Z\subseteq X} \ker \Big(\HH^k(X) \to \HH^k(X\setminus Z)\Big),$$ where the sum is over all codimension-$r$ closed subsets $Z$ of $X$. Note that,  \emph{e.g.}\ by \cite[\S 1.1]{ACMVbloch}, the action of correspondences preserves the coniveau filtration.
	Note also that, due to the Hard Lefschetz theorem, we have that $\HH^k_{\tr}(X) = 0$ as soon as $k>\dim X$.
	Now a morphism $\gamma \in \Hom(\ho(X), \ho(Y)) = \CH_0(Y_{K(X)})$ induces a homomorphism $$\gamma^* :  \HH^k_{\tr}(Y) \to \HH^k_{\tr}(X),$$ obtained by letting a lift of $\gamma$ to $\CH^{\dim X}(X\times Y)$ act on $\HH^k(Y)$\,; this is well-defined since the difference of any two lifts is a correspondence supported on $D\times Y$ for some divisor $D\subseteq X$ and hence sends $\HH^k(Y)$ into $\NN^1\HH^k(X)$. Therefore, given a birational motive $\varpi \ho(X)$, one may define its transcendental cohomology $\HH^*_{\tr}(\varpi\ho(X))$ as $\varpi^*\HH^*_{\tr}(X)$. 
	Note however that $\HH^*_{\tr}$ does \emph{not} define a $\otimes$-functor from the category of birational motives to the category of graded vector spaces\,; for instance, if $C$ is a smooth projective curve, then $\HH^1_{\tr}(C) \otimes \HH^1_{\tr}(C) \subsetneq \HH^2_{\tr}(C\times C)$ since $\HH^2_{\tr}(C\times C)$ does not contain the $(1,1)$-component of the diagonal $\Delta_C$.

	\begin{defn}[Birational Chow--K\"unneth decomposition] \label{D:BCK}
		Let $X$ be a smooth
	projective variety over a field~$K$. A birational Chow--K\"unneth decomposition
	of $\ho(X)$ is a decomposition $$\ho(X) = \ho_0(X) \oplus \cdots \oplus \ho_d(X), \quad \mbox{with} \ \ho_i(X) = \varpi^X_i\ho(X) = (X,\varpi^X_i)$$ such that $\HH^*_{\tr}(\ho_i(X)) := (\varpi^X_i)^*\HH_{\tr}(X) =  \HH^i_{\tr}(X).$ In other words, a birational  Chow--K\"unneth decomposition is a collection
	$\{\varpi_0^X, \ldots, \varpi_d^X \} \subset \End(\ho(X))$
	 such that
\begin{enumerate}[(a)]
	\item $\mathrm{id}_{\ho(X)}  = \varpi_0^X+\cdots + \varpi_d^X$\,;
	\item $\varpi_k^X \circ \varpi_k^X =  \varpi_k^X$ for all $k$\,;
	\item $\varpi_i^X \circ \varpi_j^X = 0$ for all $i\neq j$\,;
	\item $(\varpi_k^X)^* :  \HH^l_{\tr}(X) \to \HH^l_{\tr}(X)$ is the identity if $k=l$ and is zero otherwise.
\end{enumerate}

\end{defn}
If $X$ has a
	\emph{Chow--K\"unneth decomposition} $\{\pi_X^0,\ldots, \pi_X^{2\dim X}\}$ (in the sense of Murre~\cite{Murre}\,; see Definition~\ref{D:MCK}), then  $\varpi_i^X :=
(\pi^{2\dim X-i}_X)|_{k(X)\times X}$ defines a birational Chow--K\"unneth decomposition. In particular, in view of Murre's conjecture~\cite{Murre}, a birational Chow--K\"unneth decomposition is expected to exist for all smooth projective varieties. 
(Note that by general conjectures we should have $\varpi^X_i = 0$ for all $i>\dim X$).
Moreover, the descending filtration $F^\bullet$ on $\CH_0(X)$ defined by 
\begin{equation}\label{E:BB}
F^k \CH_0(X) := \CH_0\big (\ho_k(X) \oplus \cdots \oplus \ho_d(X)\big)
\end{equation}
is expected to be independent of the choice of a birational Chow--K\"unneth decomposition and to coincide with the conjectural Bloch--Beilinson filtration\,;  see \cite[\S 5]{Jannsen}.
Finally, having a birational Chow--K\"unneth decomposition is a stably birational invariant\,; see~\cite[Prop.~3.4]{Shen}.

	\subsection{Co-multiplicative birational Chow--K\"unneth decompositions}
	
	\begin{defn}[Co-multiplicative birational Chow--K\"unneth decomposition]
		\label{D:MBCK}
		 Let $X$ be a smooth
		projective variety over a field $K$. A birational Chow--K\"unneth decomposition
		$\{\varpi_0^X, \ldots, \varpi_d^X \}$ of $\ho(X)$ is said to be
		\emph{co-multiplicative} if the induced decomposition $$\ho(X) = \ho_0(X) \oplus \cdots \oplus \ho_d(X), \quad \ho_k(X) := \varpi_k^X\ho(X)$$ defines a \emph{unital grading} of the co-algebra object $\ho(X)$. 
	\end{defn}

\begin{lem}\label{L:concrete-coMBCK}
A birational Chow--K\"unneth decomposition
$\{\varpi_0^X, \ldots, \varpi_d^X \}$ of $\ho(X)$ is co-multiplicative if and only if
\begin{enumerate}[(a)]
	\item $\varpi^X_0 = o_{K(X)}$ for some  zero-cycle $o \in \CH_0(X)$, 
	and
	\item$(\varpi_i^X \otimes \varpi_j^X) \circ \delta_X \circ \varpi_k^X = 0$ for all $k\neq i+j$.
\end{enumerate} 
\end{lem}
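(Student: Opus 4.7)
The plan is to unfold the definition of unital grading from \S\ref{SS:grading}, which requires that (i) the co-multiplication $\delta_X$ is graded, (ii) the co-unit $\epsilon$ is graded, i.e.\ $\epsilon\circ \varpi_i^X=0$ for $i>0$, and (iii) the restriction $\epsilon_0:\ho_0(X)\to \mathds{1}$ is an isomorphism. The proposal is to match (i) with condition (b) of the lemma, to show that (ii) is automatic from the birational Chow--K\"unneth property, and to match (iii) with condition (a).

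The equivalence of (i) and (b) is formal: since $\{\varpi_i^X\otimes \varpi_j^X\}_{i,j}$ is a complete system of mutually orthogonal idempotents on $\ho(X)\otimes \ho(X)$, saying that $\delta_X\circ \varpi_k^X$ factors through $\bigoplus_{i+j=k}\ho_i(X)\otimes \ho_j(X)$ is equivalent to the vanishing of its $(i,j)$-component $(\varpi_i^X\otimes \varpi_j^X)\circ \delta_X\circ \varpi_k^X$ whenever $i+j\neq k$.

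For (ii), one uses \eqref{E:biratmorphism} to identify $\Hom(\ho(X),\mathds{1})$ with $\Q$ and $\End(\ho(X))$ with $\CH_0(X_{K(X)})$. Under these identifications, $\epsilon\circ \varpi_i^X$ is simply the degree of the zero-cycle $\varpi_i^X\in \CH_0(X_{K(X)})$; lifting $\varpi_i^X$ to a representative in $\CH^{\dim X}(X\times X)$ and restricting to the generic point of the first factor shows that this degree equals $(\varpi_i^X)^*(1)$ for $1\in \HH^0(X)=\HH^0_{\tr}(X)=\Q$. This vanishes for $i>0$ by property~(d) of Definition~\ref{D:BCK}.

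For the equivalence of (iii) and (a), the key observation is that under $\End(\ho(X))=\CH_0(X_{K(X)})$, having $\varpi_0^X=o_{K(X)}$ for some $o\in \CH_0(X)=\Hom(\mathds{1},\ho(X))$ is equivalent to the factorization $\varpi_0^X=o\circ \epsilon$ through $\mathds{1}$. Given such a factorization, idempotency of $\varpi_0^X$ combined with $\varpi_0^X\neq 0$ (itself forced by (d) at $l=0$) yields $\deg(o)=1$, hence $\epsilon\circ o=\mathrm{id}_{\mathds{1}}$, exhibiting $\epsilon_0$ and $o$ as mutually inverse isomorphisms between $\ho_0(X)$ and $\mathds{1}$. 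Conversely, (iii) provides $o:=\epsilon_0^{-1}:\mathds{1}\to \ho_0(X)\hookrightarrow \ho(X)$ with $\varpi_0^X=o\circ \epsilon$, and hence $\varpi_0^X=o_{K(X)}$. The one piece of bookkeeping that merits care is the verification that composed morphisms like $o\circ \epsilon$ coincide with the base change $o_{K(X)}$ under the identification \eqref{E:biratmorphism}; this is a routine unwinding of the composition law for correspondences restricted to the generic point, and no genuine obstacle is anticipated.
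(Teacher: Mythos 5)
Your proposal is correct and follows essentially the same route as the paper's proof: both reduce the statement to the definition of unital grading, observe that the vanishing of the co-unit on $\ho_k(X)$ for $k>0$ is automatic from the normalization $(\varpi_k^X)^*|_{\HH^0_{\tr}(X)}=0$ in Definition~\ref{D:BCK}, identify the co-algebra-grading condition with (b) via orthogonality of the idempotents $\varpi_i^X\otimes\varpi_j^X$, and identify the condition that $\epsilon_0$ be an isomorphism with (a) using $\Hom(\mathds 1,\ho(X))=\CH_0(X)$ together with idempotency to force $\deg o=1$. The only cosmetic difference is that the paper additionally remarks at the end that (b) forces $o$ to be a unit in the co-algebra sense, which in your write-up is absorbed into the definition of unital grading from \S\ref{SS:grading}.
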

\begin{proof}
	We note that, due to the fact that the idempotents $\varpi^X_k$ act as zero on $\HH^{0}(X)=\HH^0_{\tr}(X)$ for $k>0$, the degree map $\epsilon : \ho(X) \to \mathds 1$ (which is induced by the structure morphism $X \to \operatorname{Spec} K$)  restricts to the zero map on $\overline{\ho}(X) := \bigoplus_{k>0} \ho_k(X)$ for any choice of birational Chow--K\"unneth decomposition. Therefore the birational Chow--K\"unneth decomposition
$\{\varpi_0^X, \ldots, \varpi_d^X \}$ of $\ho(X)$ is 
co-multiplicative if and only if the degree morphism $\epsilon : \ho(X) \to \mathds 1$ is such that
$\epsilon_0 : \ho_0(X) \stackrel{\sim}{\longrightarrow} \mathds 1$ is an isomorphism of co-algebra objects,
and 
the induced grading is a co-algebra grading, \emph{i.e.}, for all $k$ the restriction of the co-multiplication
\begin{equation*}
\ho_k(X) \hookrightarrow \ho(X) \longrightarrow \ho(X) \otimes \ho(X) \quad \text{factors through} \
\bigoplus_{i+j=k}	\ho_{i}(X)\otimes \ho_j(X).
\end{equation*}
The latter is equivalent to $(b)$. We note that $\epsilon_0 : \ho_0(X) \stackrel{\sim}{\longrightarrow} \mathds 1$ is an isomorphism of birational motives if and only if $\varpi_0^X = o_{K(X)}$ with $o = \epsilon_0^{-1} \ \mbox{in}\ \Hom (\mathds 1, \ho(X)) = \CH_0(X)$. In order to conclude, we observe that $\varpi^X_0$ being an idempotent forces $\deg o = 1$, and then that (b) forces $o : \mathds 1 \to \ho(X)$ to be a unit, \emph{i.e.}, to be a co-algebra morphism. 
\end{proof}

In terms of zero-cycles, and working with rational coefficients, Lemma~\ref{lem:zero} and Lemma~\ref{L:concrete-coMBCK} show that a birational Chow--K\"unneth decomposition	$\{\varpi_0^X, \ldots, \varpi_d^X \}$ is co-multiplicative if and only if 
$$\CH_0(X_\Omega)_{(k)} := (\varpi_k^X)_* \CH_0(X_\Omega) = \Hom(\mathds 1_{\Omega}, (X_\Omega,\varpi_k^X))$$
defines a grading on $\CH_0(X_\Omega)$ with the property that
\begin{enumerate}[(a)]
\item $\CH_0(X_\Omega)_{(0)} = \Q o$ for some zero-cycle $o\in \CH_0(X)$ (necessarily of degree 1), and
\item $\CH_0(X_\Omega)_{(k)} \hookrightarrow\CH_0(X_\Omega) \stackrel{\delta}{\to} \CH_0(X_\Omega \times_\Omega X_\Omega) $ factors through $$\mathrm{im} \Bigg(\bigoplus_{k=i+j} \CH_0(X_\Omega)_{(i)} \otimes \CH_0(X_\Omega)_{(j)} \hookrightarrow  \CH_0(X_\Omega) \otimes \CH_0(X_\Omega) \to \CH_0(X_\Omega \times_\Omega X_\Omega) \Bigg) .$$
\end{enumerate}
In Appendix~\ref{S:delta}, we will say that this grading on $\CH_0(X_\Omega)$, induced by a co-multiplicative birational Chow--K\"unneth decomposition, is a $\delta$-grading\,; see Definition~\ref{D:delta-fil} and Proposition~\ref{P:justify}.\medskip
	
	Having a co-multiplicative birational Chow--K\"unneth decomposition is a  stable
	birational invariant among smooth projective varieties. In addition, it is stable under
	product\,; indeed, if $\{\varpi_i^X\}$ and $\{\varpi_j^Y\}$ denote co-multiplicative birational Chow--K\"unneth decompositions for smooth projective varieties $X$ and $Y$ respectively, then it is straightforward to check that $\{\varpi_k^{X\times Y} := \sum_{k=i+j} \varpi_i^X\otimes \varpi_j^Y\}$ defines a co-multiplicative birational Chow--K\"unneth decompositions for the product $X\times Y$. 
	
	Recall that a Chow--K\"unneth decomposition $\{\pi_X^0,\ldots, \pi_X^{2\dim X}\}$ for $X$ is \emph{multiplicative} if the induced decomposition $\h(X) = \h^0(X) \oplus \cdots \oplus \h^{2\dim X}(X)$, $\h^k(X) := \pi^k_X\h(X)$, defines an algebra grading\,; see Definition~\ref{D:MCK} for the definition and \cite{FLV-mck} for an overview.
	We note that if a smooth projective variety $X$ admits a \emph{multiplicative
	Chow--K\"unneth decomposition} $\{\pi_X^0,\ldots, \pi_X^{2\dim X}\}$ with $\pi^{2\dim X}_X = X\times o$ for some zero-cycle $o \in \CH_0(X)$, then the
	birational Chow--K\"unneth decomposition given by $\varpi_i^X :=
	(\pi^{2\dim X-i}_X)|_{k(X)\times X}$ is co-multiplicative.
	For instance, the canonical Chow--K\"unneth decomposition \cite{DM} of an abelian variety is multiplicative and thereby provides a co-multiplicative birational Chow--K\"unneth decomposition.
	 In \cite[Conj.~4]{SV}, it is
	conjectured that all hyper-K\"ahler varieties admit a multiplicative
	Chow--K\"unneth decomposition\,; in particular, it implies
	
	\begin{conj}[Co-multiplicative birational Chow--K\"unneth decomposition for hyper-K\"ahler
		varieties]\label{conj:BMCK}
		Every hyper-K\"ahler variety admits a co-multiplicative birational
		Chow--K\"unneth decomposition.
	\end{conj}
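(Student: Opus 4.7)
The plan is to reduce Conjecture~\ref{conj:BMCK} to the Shen--Vial conjecture on the existence of a multiplicative Chow--K\"unneth decomposition for hyper-K\"ahler varieties, and then settle the listed cases by invoking or constructing such decompositions explicitly.  Recall that if a smooth projective variety $X$ of dimension $d$ admits a Chow--K\"unneth decomposition $\{\pi_X^0,\dots,\pi_X^{2d}\}$ with $\pi_X^{2d}=X\times o$ for some zero-cycle $o\in\CH_0(X)$ of degree $1$, then the correspondences $\varpi_i^X:=(\pi_X^{2d-i})|_{K(X)\times X}$ form a birational Chow--K\"unneth decomposition of $\ho(X)$ satisfying $\varpi^X_0=o_{K(X)}$.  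If moreover $\{\pi_X^i\}$ is multiplicative, then dualising the relation $\pi_X^k\circ\delta_X^*\circ(\pi_X^i\otimes\pi_X^j)=0$ for $k\neq i+j$ yields $(\varpi_i^X\otimes\varpi_j^X)\circ(\delta_X)_*\circ\varpi_k^X=0$ for $k\neq i+j$, which is condition~(b) of Lemma~\ref{L:concrete-coMBCK}. Together with condition~(a) supplied by the shape of $\pi_X^{2d}$, this delivers the required co-multiplicative birational Chow--K\"unneth decomposition.

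For case~\eqref{hilb} I would invoke the multiplicative Chow--K\"unneth decomposition on $\mathrm{Hilb}^n(S)$, built from the Beauville--Voisin multiplicative decomposition of the K3 surface $S$ (with $\pi_S^4=S\times o_S$) by the symmetric-product construction on $S^{(n)}$ and descent along the Hilbert--Chow map; the argument uses~\S\ref{SS:quotient} in an essential way to pass invariant projectors through the quotient by $\mathfrak S_n$.  Case~\eqref{moduli} is then automatic from Theorem~\ref{thm:moduli}, since the co-algebra isomorphism $\ho(\operatorname{M}_\sigma(v))\simeq\ho(\mathrm{Hilb}^n(S))$ transports the co-multiplicative BCK.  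For case~\eqref{kum} I would begin with the canonical multiplicative Chow--K\"unneth decomposition of the abelian surface $A$ coming from K\"unnemann's Theorem~\ref{T:K}, form the invariant symmetric projectors on $A^{n+1}$, restrict to the fiber of the summation map $A^{n+1}\to A$ (a birational model of the generalised Kummer variety), and again descend along $\mathfrak S_{n+1}$.

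For case~\eqref{fano} the input is the multiplicative Chow--K\"unneth decomposition of the Fano variety $F(Y)$ constructed by Shen--Vial~\cite{SV} via the Fourier transform associated with the incidence class and the self-rational map $\varphi:F\dashrightarrow F$.  The resulting eigenspace decomposition $\CH_0(F)=\CH_0(F)_{(0)}\oplus\CH_0(F)_{(1)}\oplus\CH_0(F)_{(2)}$, together with the commutation $\delta\circ\varphi=(\varphi\times\varphi)\circ\delta$ already exploited in the introduction, directly gives condition~(b) of Lemma~\ref{L:concrete-coMBCK}, while $\CH_0(F)_{(0)}=\Q[o]$ with $o$ the Beauville--Voisin point supplies condition~(a).

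The principal obstacle to the full conjecture is that the Shen--Vial conjecture on multiplicative Chow--K\"unneth decompositions is itself open, and outside of deformation arguments within locally complete families no general mechanism produces even a single idempotent zero-cycle $o\in\CH_0(X)$ with $\delta_*o=o\times o$. A more tractable route, adequate for most applications of this paper (notably Proposition~\ref{P:smash-hyperK} and Theorem~\ref{T:Voisin-filt}), is to construct the projectors $\varpi_i^X$ directly at the level of birational motives, bypassing the stronger Chow--K\"unneth requirement; one should expect this to be precisely the strategy realised in Theorem~\ref{thm:BMCK} and Theorem~\ref{thm:main2}, where co-multiplicativity is verified after restriction to the generic point without lifting to a genuine MCK.
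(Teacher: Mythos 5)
The statement you are asked about is a \emph{conjecture}; the paper offers no proof of it in general, only the observation that it follows from the Shen--Vial conjecture on multiplicative Chow--K\"unneth decompositions together with a proof in the special cases \eqref{hilb}, \eqref{moduli}, \eqref{kum}, \eqref{fano} (Theorem~\ref{thm:BMCK}). Your proposal correctly recognizes this and reproduces both halves of the paper's treatment: the reduction ``MCK with $\pi_X^{2d}=X\times o$ $\Rightarrow$ co-multiplicative BCK by restriction to the generic point'' is exactly the implication the paper states before the conjecture, and your case-by-case discussion matches the proof of Theorem~\ref{thm:BMCK} (case \eqref{moduli} via Theorem~\ref{thm:moduli}, case \eqref{kum} via the Deninger--Murre projectors on a birational model $A^n/\mathfrak S_{n+1}$, case \eqref{fano} via Voisin's self-map $\varphi$ and the eigenspace decomposition of \cite{SV}). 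The one substantive divergence is in case \eqref{hilb}: your primary route goes through the full multiplicative Chow--K\"unneth decomposition of $\mathrm{Hilb}^n(S)$, which the paper explicitly avoids because that existence result rests on an unpublished theorem of Voisin; instead the paper constructs the birational projectors directly on $S$ (using the Beauville--Voisin relation, or the sweeping of $S$ by elliptic curves meeting rational curves) and then tensors and symmetrizes. You anticipate precisely this workaround in your closing paragraph, so the proposal is consistent with the paper; it simply presents the heavier MCK route as primary where the paper makes the lighter birational construction primary.
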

As for multiplicative Chow--K\"unneth decompositions,  co-multiplicative birational
Chow--K\"unneth decompositions may not be unique in general\,: \emph{e.g.}\ for abelian varieties, where any translate of a co-multiplicative birational Chow--K\"unneth decomposition provides a co-multiplicative birational Chow--K\"unneth decomposition. However, in the case of hyper-K\"ahler varieties, we would further expect a co-multiplicative birational
	Chow--K\"unneth decomposition to be \emph{unique}.
	
	\begin{thm}\label{thm:BMCK}
	A smooth projective variety of dimension $2n$ birational to one of the  hyper-K\"ahler varieties \eqref{hilb}, \eqref{moduli}, \eqref{kum} or \eqref{fano}
	 admits a co-multiplicative birational 
	Chow--K\"unneth decomposition $\{\varpi^{X}_{2i} \ \big\vert \ 0\leq i \leq n\}$.
	\end{thm}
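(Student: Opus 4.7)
The plan is to lift each case to a multiplicative Chow--K\"unneth (MCK) decomposition $\{\pi^k_X\}$ on the full Chow motive $\h(X)$ with the special property that $\pi^{2\dim X}_X = X \times o$ for some zero-cycle $o\in \CH_0(X)$. Then the observation recorded just before Conjecture~\ref{conj:BMCK} yields the desired co-multiplicative birational Chow--K\"unneth decomposition via the restriction $\varpi^X_i := \pi^{2\dim X - i}_X\big|_{\operatorname{Spec}K(X)\times X}$. Since having a co-MBCK is stably birationally invariant, I may assume $X$ is one of the hyper-K\"ahler varieties~\eqref{hilb}--\eqref{fano} themselves, and the required co-MBCK lives only in even degrees once one checks that the odd-degree transcendental cohomology of these $X$ vanishes.

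For cases~\eqref{hilb}, \eqref{kum}, \eqref{fano}, I invoke existing MCK constructions. For $\operatorname{Hilb}^n(S)$, start from the Beauville--Voisin MCK on the K3 surface $S$, which has $\pi^4_S = S\times o_S$ with $o_S$ the Beauville--Voisin point, take the $\mathfrak{S}_n$-equivariant $n$-th tensor power to get an MCK on $S^n$ with top piece $S^n \times o_S^{\times n}$, and descend to the birational model $S^{(n)} = S^n/\mathfrak{S}_n$ via the formalism recalled in \S\ref{SS:quotient}. For $K_n(A)$, apply Theorem~\ref{T:K} (the Deninger--Murre multiplicative CK) on $A^{n+1}$, whose top piece is $A^{n+1}\times [0]$, and descend the $\mathfrak{S}_{n+1}$-invariant part to the birational model $A^n/\mathfrak{S}_{n+1}$ of $K_n(A)$ indicated in \S\ref{SS:quotient}. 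For $F(Y)$, cite the MCK of the Fano variety of lines with top piece $F(Y) \times o$, where $o$ is the Beauville--Voisin class. In each instance the top Chow--K\"unneth projector is supported on ``variety $\times$ point'', so the restriction to the generic point produces an idempotent $\varpi^X_0 = o_{K(X)}$ satisfying part~(a) of Lemma~\ref{L:concrete-coMBCK}, while part~(b) is exactly multiplicativity of the full MCK transcribed via Poincar\'e duality.

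Case~\eqref{moduli} is handled more efficiently by transport of structure rather than by building an MCK on $M_\sigma(v)$ directly. Theorem~\ref{thm:moduli} produces an isomorphism $\psi : \ho(M_\sigma(v)) \stackrel{\sim}{\longrightarrow} \ho(\operatorname{Hilb}^n(S))$ of co-algebra objects. Conjugating the co-MBCK from case~\eqref{hilb} by $\psi$ yields a collection of orthogonal idempotents on $\ho(M_\sigma(v))$ that remains a co-MBCK: the compatibility with the diagonal in Lemma~\ref{L:concrete-coMBCK}(b) is automatic because $\psi$ preserves co-multiplication and co-unit, and the transported $\varpi_0$ is the image under $\psi^{-1}$ of the Beauville--Voisin class on $\operatorname{Hilb}^n(S)$, hence of the form $o_{K(M_\sigma(v))}$; the required action on transcendental cohomology is preserved because $\psi$ realises to an isomorphism on $\HH^\ast_{\tr}$.

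The principal technical obstacle lies in verifying, for cases~\eqref{hilb}, \eqref{kum}, \eqref{fano}, that the MCK decompositions produced in the literature really do have the sharpened form $\pi^{2\dim X}_X = X\times o$, and not merely a general idempotent with the correct cohomological realisation. This is the point where the explicit existence of the Beauville--Voisin class on each of these hyper-K\"ahler varieties is essential; in each cited construction the top projector is built from (and is supported on) the class of a distinguished point, so the property holds by inspection. A secondary verification, needed for case~\eqref{kum}, is that the $\mathfrak{S}_{n+1}$-invariant descent of the Deninger--Murre decomposition kills all odd-degree Chow--K\"unneth pieces after restriction to the birational motive of $K_n(A)$, so that only the even-indexed $\varpi^X_{2i}$ survive.
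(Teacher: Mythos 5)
Your overall strategy is the one the paper follows: lift to a multiplicative Chow--K\"unneth decomposition with top projector of the form $X\times o$, restrict to the generic point, and handle case~\eqref{moduli} by transporting the structure through the co-algebra isomorphism of Theorem~\ref{thm:moduli}. For case~\eqref{fano} you diverge mildly: you cite an MCK for $F(Y)$ directly, whereas the paper's proof works with the eigenspace decomposition of $\varphi_*$ for Voisin's self-map (using $\delta_F\circ\varphi=(\varphi\times\varphi)\circ\delta_F$ to get co-multiplicativity); both routes are legitimate, and the paper's Remark~\ref{R:fano} notes they produce the same decomposition. For case~\eqref{kum} your phrasing conflates $A^{n+1}$ with the correct object $A_0^{n+1}\cong A^n$ carrying the twisted $\mathfrak{S}_{n+1}$-action; the fix is routine (the action is by group homomorphisms, so it commutes with $[m]$ and the Deninger--Murre projectors are invariant), but as written the descent to $A^n/\mathfrak{S}_{n+1}$ does not parse.

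The genuine gap is the vanishing of the odd-indexed pieces and of the pieces in degree $>2n$, which is part of the statement (the decomposition is asserted to be $\{\varpi^X_{2i}\mid 0\le i\le n\}$, summing to the identity). You flag this for case~\eqref{kum} but propose to settle it by observing that $\HH^{\mathrm{odd}}_{\tr}(X)=0$. That is not enough: a nonzero idempotent on the birational motive can have vanishing transcendental realisation while still acting nontrivially on $\CH_0$ (ruling this out in general is Bloch-conjecture-type territory), so one cannot conclude that the restrictions $(\pi^{4n-(2i+1)}_X)|_{k(X)\times X}$ vanish from cohomology alone. The paper explicitly warns that this point ``requires some additional arguments.'' For~\eqref{hilb} the problem does not arise because the decomposition is built as the symmetrized tensor power of the two-term decomposition $\{\varpi^S_0,\varpi^S_2\}$ of the K3 surface, which only produces even indices between $0$ and $2n$; for~\eqref{fano} the unwanted projectors are supported on $D\times F$ for divisors $D$ and hence die at the generic point; but for~\eqref{kum} the Deninger--Murre decomposition of $A^n$ has genuinely nonzero odd pieces and pieces in degree $>2n$, and the paper kills their images in $\ho(K_n(A))$ by invoking Lin's theorem that $\CH_0(K_n(A))_i=0$ for $i$ odd and for $i>2n$. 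You need to supply that input (or an equivalent) to close the argument.
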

	\begin{proof} As explained above, the existence of a co-multiplicative birational Chow--K\"unneth decomposition in cases~\eqref{hilb}
		and~\eqref{kum} follows directly from the existence of a multiplicative Chow--K\"unneth decomposition which
		was previous established in \cite{Vial} in case~\eqref{hilb} and in \cite{ftv}
		in case~\eqref{kum}. 
		Note however that these existence results are dependent on an unpublished result of Voisin~\cite[Thm.~5.12]{VoisinGT}\,; see \cite{NOY} for an independent proof in case~\eqref{hilb}.
		Note also that the vanishing of $\varpi^{2i+1}_X$ and of $\varpi^{d}_X$ for $d>2n$ requires some additional arguments.
		 We therefore provide here a direct proof of \eqref{hilb} and \eqref{kum}, which also exemplifies the fact
		that birational Chow--K\"unneth decompositions are easier to construct and to deal with
		than usual Chow--K\"unneth decompositions.\medskip
		
		Case~\eqref{hilb}. Let us first assume $n=1$, and denote $o$ any point lying on
		a rational curve on $S$. Then we claim that $\{\varpi_0^S := \eta_S\times o,
		\varpi_2^S := \Delta_S|_{\eta_S\times S} - \varpi_0^S\}$ defines a
		co-multiplicative birational Chow--K\"unneth decomposition. With respect to this
		decomposition, we have 
		$$\CH_0(S)_{(0)} = \Q [o] \quad \text{and} \quad \CH_0(S)_{(2)} = \langle\, [p] -
		[o] \ \vert \ p\in S\, \rangle.$$ In order to check that this decomposition is
		co-multiplicative, we have to prove that the cycle $(\delta_S)_* ([p] - [o]) = [(p,p)] -
		[(o,o)]$ belongs to $$\operatorname{im} \Big(\CH_0(S)_{(0)}\otimes \CH_0(S)_{(2)} \oplus 
		\CH_0(S)_{(2)}\otimes \CH_0(S)_{(0)} \longrightarrow \CH_0(S\times S)\Big)$$
		for all points $p\in S$. In fact, for all points $p\in S$, we have 
		$$[(p,p)] - [(o,o)] = \big([(o,p)] - [(o,o)]\big) + \big([(p,o)] - [(o,o)]\big)
		\quad \text{in}\ \CH_0(S\times S),$$ which establishes the claim. This can be
		seen by applying the modified diagonal relation of Beauville--Voisin~\cite{BV}
		to $[p]$ (which is itself equivalent to the fact that $\{\pi^0_S = o\times S,
		\pi^4_S = S\times o, \pi^2_S = \Delta_S - \pi^0_S -\pi_S^4\}$ defines a multiplicative Chow--K\"unneth
		decomposition by \cite[Prop.~8.4]{SV}). More simply, this follows from the
		fact that K3 surfaces are swept out by elliptic curves\,: take $E$ a possibly
		singular elliptic curve in $S$ passing through $p$, then by the
		Bogomolov--Mumford theorem $E$ intersects a rational curve in $S$, that is, $E$
		contains a point $q$ rationally equivalent to $o$ in $S$. But then pushing
		forward the relation $([p]-[q],[p]-[q]) = 0 \ \mbox{in}\ \CH_0(E\times E)$ to
		$\CH_0(S\times S)$ yields the desired relation.
		
		Now, the co-multiplicative birational Chow--K\"unneth decomposition $\{\varpi_0^S,
		\varpi_2^S\}$ provides the co-multiplicative birational Chow--K\"unneth decomposition on
		$S^n$ given by $$\varpi^{S^n}_i  = \sum_{i_1+\cdots+i_n = i} \varpi^S_{i_1}
		\otimes \cdots \otimes \varpi^S_{i_n}.$$ Its symmetrization provides then a
		co-multiplicative birational Chow--K\"unneth decomposition for $\mathrm{Hilb}^n(S)$.
		\medskip

		Case~\eqref{moduli}.  This follows directly from \eqref{hilb} and
		Theorem~\ref{thm:moduli}.\medskip

		Case~\eqref{kum}. 
		The canonical Chow--K\"unneth decomposition of Deninger--Murre~\cite{DM} for the motive of an abelian variety $B$ provide a Chow--K\"unneth decomposition with $\pi^{2g}_{B} = B\times 0_{B}$ where $g=\dim B$. Moreover, the idempotents defining this decomposition can be expressed as rational polynomials of the multiplication-by-$m$ map for integers $m\neq -1,0,1$\,; see \emph{e.g.}~\eqref{E:DM} below.
		As such, these provide a multiplicative Chow--K\"unneth decomposition.
		As in the proof of Theorem~\ref{thm:MSD}\eqref{kum}, we have that $K_n(A)$ is birational to $A_0^{n+1}/\mathfrak{S}_{n+1}$. Identifying $A_0^{n+1}$ with $A^n$,
		 the transpositions $(i,j)$ act by permuting the $i$-th and $j$-th factors of $A^n$ for $i,j\leq n$, while the transposition $(n,n+1)$ acts as $(x_1,\ldots,x_{n-1},x_n) \mapsto (x_1,\ldots,x_{n-1},-\sum_i x_i)$. 
Consider now the Deninger--Murre projectors for the abelian variety $A^n$, and observe that they are $\mathfrak{S}_{n+1}$-invariant since the action of $\mathfrak{S}_{n+1}$ commutes with the multiplication-by-$m$ maps on $A^n$. We therefore obtain a multiplicative Chow--K\"unneth decomposition for $A^n/\mathfrak S_{n+1}$ and thus a co-multiplicative birational Chow--K\"unneth decomposition $\{\varpi_i^{K_n(A)}\}$ for $K_n(A)$. That this decomposition satisfies $\varpi_i^{K_n(A)} =0$ for $i$ odd and for $i>2n$ follows from the fact that $(\varpi_i^{K_n(A)})_*\CH_0(K_n(A))$ coincides with Lin's $\CH_0(K_n(A))_{i}$ of \cite{lin} and that these vanish for $i$ odd~\cite[Thm.~1.4]{lin}  (see also~\cite[Thm.~4.3]{Vial-Pisa})  and for~$i>2n$.
		
		\medskip
		
		Case~\eqref{fano}. Let $\varphi : F \dashrightarrow F$ be Voisin's rational
		self-map~\cite{Voisin-Intrinsic}. It is known \cite{AmVoi} that $\varphi^*\sigma = -2\,\sigma$ for
		$\sigma$ a global two-form on $F$. It was shown in \cite[Thm.~21.9]{SV} that
		the action of $\varphi_*$ on $\CH_0(F)$ diagonalizes with eigenvalues $1$, $-2$
		and $4$. The projectors for this eigenspace decomposition are polynomials in
		$\varphi_*$ and define a birational Chow--K\"unneth decomposition for~$F$. In addition, we have $\pi^8_F = F\times o$, where $o$ is the canonical zero-cycle on $F$ of Voisin~\cite{Voisin-HK}. Since
		$\delta_F \circ \varphi = (\varphi \times \varphi)\circ \delta_F$ as rational
		maps $F \dashrightarrow F\times F$, this decomposition is
		co-multiplicative.
	\end{proof}

\begin{rmk}[Explicit description in case \eqref{hilb}]
	\label{R:hilb}
With respect to the co-multiplicative birational Chow--K\"unneth decomposition constructed in the proof of Theorem~\ref{thm:BMCK}\eqref{hilb}, we can describe explicitly the induced decomposition of $\CH_0(\operatorname{Hilb}^n(S))$, namely
writing $\CH_0(\operatorname{Hilb}^n(S))_{(2k)} := \CH_0(\ho_{2k}(\operatorname{Hilb}^n(S)))$, we have 
$$\CH_0(\operatorname{Hilb}^n(S))_{(2k)} = \langle\, (Z_*o)^{n-k}\cdot \prod_{i=1}^k (Z_*x_i-Z_*o) \ \big\vert \ x_1,\ldots, x_k \in S \, \rangle,$$
where $Z$ is the cycle class of the codimension-2 subset $\{(x,\xi)  \, \big\vert \, x\in  \operatorname{supp}(\xi)   \} \subset S \times \operatorname{Hilb}^n(S)$.
Note that, for any $x_1,\ldots,x_n\in S$, the zero-cycle $(Z_*x_1)\cdots (Z_*x_n)$ is the class of any point with support $\sum_i x_i$\,; sometimes we simply write it $[x_1,\ldots, x_n]$. 
For the induced ascending filtration, we have 
\begin{align*}
G_{k}\CH_0(\operatorname{Hilb}^n(S)) 
&:= \CH_0\big(\ho_0(\operatorname{Hilb}^n(S)) \oplus \cdots \oplus \ho_{2k}(\operatorname{Hilb}^n(S)\big)  \\
&= \langle\,  [x_1,\ldots,x_k,o,\ldots,o] \ \big\vert \ x_1,\ldots ,x_k \in S \, \rangle.
\end{align*}
\end{rmk}

\begin{rmk}[Explicit description in case \eqref{fano}]
	\label{R:fano}
	 By \cite{SV}, we have an explicit description of the decomposition on $\CH_0(F)$ induced by the co-multiplicative birational Chow--K\"unneth decomposition constructed in the proof of Theorem~\ref{thm:BMCK}. Namely, writing $\CH_0(F)_{(2k)} = \CH_0(\ho_{2k}(F))$, we have 
	\begin{align*}
	\CH_0(F)_{(0)} &= \Q o\,; \\
	\CH_0(F)_{(2)} &=  \langle [l]-o \ \big\vert \  l \mbox{ is a line of second type} \rangle\,;\\
	\CH_0(F)_{(4)} &= \langle [l_1]+[l_2]+[l_3]-3o \ \big\vert \ (l_1,l_2,l_3) \mbox{ is a triangle}\rangle.
	\end{align*}
	Here, we say that a line $l$ is \emph{of second type} if there exists a linear $\PP^3$ inside $\PP^5$ that is tangent to the cubic fourfold $Y$ along the line $l$\,; and we say that $(l_1,l_2,l_3)$ forms a \emph{triangle} if there exists a linear $\PP^2$ inside $\PP^5$ such that $Y\cap \PP^2 = l_1 \cup l_2 \cup l_3$. 
	Moreover, the above-defined co-multiplicative birational Chow--K\"unneth decomposition is the decomposition induced by the Chow--K\"unneth decomposition of the Chow motive $\h(F)$ constructed in \cite[\S A.2.1]{FLV-Franchetta} under the $\otimes$-functor sending effective motives to birational motives. 
\end{rmk}
	
\begin{rmk}[Double EPW sextics]
If $X$ is a double EPW sextic, then its anti-symplectic involution $\iota$ provides a birational Chow--K\"unneth decomposition. Indeed, let $o\in \CH_0(X)$ denote a $\iota$-invariant degree-1 zero-cycle on $X$ and define $\pi^8_X := X\times o$, and then define $\pi^6_X$ and $\pi^4_X$ to be the projectors on the $\iota$-invariant and $\iota$-anti-invariant parts of $(X,\Delta_X - \pi^8_X)$ respectively. The restrictions $\varpi_i^X:= \pi_X^{8-i}|_{k(X)\times X}$ of those projectors to $\h^\circ(X)$ then define a birational Chow--K\"unneth decomposition and with respect to this decomposition we have
$$\CH_0(X)_{(0)} = \Q o, \quad \CH_0(X)_{(2)} = (\CH_0(X)_{\mathrm{hom}})^- \quad \mbox{and} \  \CH_0(X)_{(4)} = (\CH_0(X)_{\mathrm{hom}})^+,$$
where the subscript $+$ indicates the $\iota$-invariant part and the subscript $-$ indicates the $\iota$-anti-invariant part of the subgroup of 0-cycles of degree zero.
 In case $X$ is birational to the Hilbert square of a K3 surface, there exists a point $o \in X$ such that this decomposition coincides by \cite[Thm.~3.6]{LV}  with the one of \eqref{hilb}, and is thus co-multiplicative. (In fact, still in case $X$ is birational to the Hilbert square of a K3 surface, Proposition~\ref{P:cogeneration} below shows that any two points $o$ and $o'$ in $X$ whose classes in $\CH_0(X)$ are $\iota$-invariant agree in $\CH_0(X)$.)
\end{rmk}

\section{Strict gradings on birational motives of hyper-K\"ahler varieties}
\label{S:strict}

The aim of this section is to establish that the unital gradings constructed in the proof of Theorem~\ref{thm:BMCK} in cases \eqref{hilb}, \eqref{moduli}, \eqref{kum} and \eqref{fano} define strict gradings as defined in \S \ref{SS:strictgrading}\,;
 see Theorem~\ref{thm:cogeneration}. In \S \ref{SS:cogeneration} below, we start with some general observations concerning co-generation properties of birational motives of smooth projective varieties.

\subsection{On the co-generation of the birational motive of a smooth projective variety}\label{SS:cogeneration}
We have the following general expectation coming from the Bloch--Beilinson philosophy\,:
\begin{conj}\label{C:cogeneration-general}
	Let $X$ be a smooth projective variety and let $i$ be a positive integer. Assume that  $\HH^{*}_{\tr}(X)$ is generated by $\HH^i_{\tr}(X)$. Then there exists a birational idempotent $\varpi_i^X \in \End (\ho(X))$ with $(\varpi_i^X)^*\HH^{*}_{\tr}(X) = \HH^{i}_{\tr}(X)$, and, for any choice of such an idempotent $\varpi_i^X$, the co-algebra object $\ho(X)$ is co-generated by $\ho_i(X):= (X,\varpi^X_i)$, meaning that the morphism
	$$\xymatrix{\ho(X) \ar[r] & \operatorname{Sym}^* \ho_i(X)}$$ co-induced by the split surjection $\ho(X) \twoheadrightarrow \ho_i(X)$ is split injective.
\end{conj}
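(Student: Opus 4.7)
The plan is to split the conjecture into its two assertions---existence of a projector $\varpi_i^X$ with the stated cohomological behavior, and co-generation of $\ho(X)$ by $\ho_i(X)$ for any such choice---and handle them separately.

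For the existence of $\varpi_i^X$, I would first invoke Murre's conjecture (in its known cases): any Chow--K\"unneth decomposition $\{\pi_X^j\}$ of $\h(X)$ yields, by restriction to the generic point as recalled in \S\ref{SS:BCK}, a birational Chow--K\"unneth decomposition $\varpi_j^X := \pi_X^{2\dim X - j}|_{\eta_X \times X}$ in the sense of Definition~\ref{D:BCK}; in particular $(\varpi_i^X)^*\HH^*_{\tr}(X) = \HH^i_{\tr}(X)$. Such decompositions are known for abelian varieties, curves and surfaces, and for the hyper-K\"ahler varieties (i)--(iv) via the Chow--K\"unneth decompositions constructed in Theorem~\ref{thm:BMCK}. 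Absent a full Chow--K\"unneth decomposition, one still has to lift the cohomological K\"unneth projector onto $\HH^i_{\tr}(X)$ to rational equivalence, which is itself a nontrivial problem, but not the main content of the conjecture.

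For the co-generation, given $\varpi_i^X$ and the induced split surjection $r : \ho(X) \twoheadrightarrow \ho_i(X)$, the morphism in question is the co-algebra map $T^{\leq n} r = \epsilon + r + r^{\otimes 2}\circ \delta + \cdots + r^{\otimes n}\circ \delta^{n-1}$ of \S\ref{SS:grading}, which factors through $\operatorname{Sym}^{\leq n}\ho_i(X)$ since we work with rational coefficients. To prove split injectivity, I would---by Lemma~\ref{lem:zero} and the method of Lemma~\ref{lem:birat-surj}---construct a retraction $s : \operatorname{Sym}^{\leq n}\ho_i(X) \to \ho(X)$ such that $s \circ T^{\leq n} r = \operatorname{id}_{\ho(X)}$, equivalently such that $s_* \circ (T^{\leq n} r)_* = \operatorname{id}$ on $\CH_0$ over a universal domain $\Omega$. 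At the level of cohomology a retraction exists by hypothesis: the generation of $\HH^*_{\tr}(X)$ by $\HH^i_{\tr}(X)$ is precisely the statement that the cohomological realization of $T^{\leq n} r$ is injective on transcendental cohomology, hence admits a section as graded vector spaces.

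The main obstacle---essentially Bloch--Beilinson-theoretic---is to lift this cohomological retraction to the level of birational motives. Two complementary strategies suggest themselves. The conditional route is to assume enough of Murre's conjecture together with a conservativity principle, so that a cohomological splitting between direct summands of motives of smooth projective varieties lifts automatically to a splitting in $\mathcal{M}^\circ(K)_\Q$. The geometric route, in the spirit of Theorem~\ref{thm:cogeneration} of this paper (which settles the case $i=2$ for the hyper-K\"ahler varieties (i)--(iv)), is to build $s$ out of explicit correspondences produced by the ambient geometry of $X$: Voisin's self-map in case \eqref{fano}, the incidence correspondences between moduli spaces and Hilbert schemes in case \eqref{moduli}, products of surface diagonals in the symmetric quotient model of case \eqref{hilb}, and so on. This approach must be repeated for each new family, and the lack of a universal geometric mechanism for producing, case-free, the required correspondences is presumably why the statement is offered as a conjecture rather than a theorem.
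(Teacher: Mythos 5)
The statement you were asked to prove is labelled as a \emph{conjecture} in the paper, and the paper offers no proof of it: it is presented as a ``general expectation coming from the Bloch--Beilinson philosophy,'' and only the special case $i=2$ for the hyper-K\"ahler varieties \eqref{hilb}, \eqref{moduli}, \eqref{kum}, \eqref{fano} is actually established (Theorem~\ref{thm:cogeneration}). Your proposal correctly recognizes this, and the strategy you sketch is consistent with what the paper does where it can: the existence of $\varpi_i^X$ from a Chow--K\"unneth decomposition by restriction to the generic point is exactly the mechanism of \S\ref{SS:BCK}; the reduction of split injectivity of $T^{\leq n}r$ to producing explicit correspondences $\mu^k$ with $\mu^k_*\circ\bar\delta^{k-1}_*$ a nonzero multiple of the identity on $\CH_0$ over a universal domain is precisely the method of the proof of Theorem~\ref{thm:cogeneration}; and the cohomological input you identify (generation of $\HH^*_{\tr}(X)$ by $\HH^i_{\tr}(X)$ forcing injectivity of the dual reduced co-multiplication) is the content of the Poincar\'e-duality argument in Proposition~\ref{P:strictgradingBCK}. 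You also correctly locate the genuine obstruction --- lifting the cohomological retraction to rational equivalence --- which is exactly why the statement is conjectural. Two minor cautions: Lemma~\ref{lem:birat-surj} as stated detects split \emph{surjectivity} from surjectivity on $\CH_0(\,\cdot\,_\Omega)$, not split injectivity, so for the injectivity direction you really do need to exhibit the retraction by hand (as you in fact propose via Lemma~\ref{lem:zero}); and the conjecture also asserts independence of the choice of $\varpi_i^X$, which your sketch does not address, though nothing in the paper addresses it either.
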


Very recently, Voisin conjectured 
\cite[Conj.~2.11]{Voisin-lefschetz} that two points $x$ and $y$ on a smooth projective variety $X$ with $\HH^{*}_{\tr}(X)$ generated by $\HH^2_{\tr}(X)$ are rationally equivalent if and only if they have same class in $\CH_0(X)/F^3_{BB}\CH_0(X)$. Here $F^3_{BB}\CH_0(X) := \cap \ker (\Gamma_* : \CH_0(X) \to \CH_0(\Sigma) )$, where the intersection runs through all smooth projective surfaces $\Sigma$ and all correspondences $\Gamma \in \CH^2(X\times \Sigma)$, is an explicit candidate for the third step of the conjectural 
Bloch--Beilinson filtration.
Working instead with the filtration induced by a birational Chow--K\"unneth decomposition (which conjecturally should give the conjectural Bloch--Beilinson filtration), we can relate our expectation on co-generation (Conjecture~\ref{C:cogeneration-general}) to Voisin's expectation\,:

\begin{prop}\label{P:cogeneration}
	Let $X$ be a smooth projective variety. Let $\ho_{\varpi}(X) := (X,\varpi)$ be a direct summand of $\ho(X)$ and assume that $\ho(X)$ is co-generated by $\ho_{\varpi}(X)$.
	If $x$ and $y$ are two points on $X$, then
	$$[x]=[y] \ \mbox{in}\ \CH_0(X) \ \iff \ \varpi_*[x] = \varpi_*[y] \ \mbox{in}\ \CH_0(X).$$
\end{prop}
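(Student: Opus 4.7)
The $\Rightarrow$ direction is immediate upon applying $\varpi_*$, so the content is the converse. My plan is to translate the hypothesis $\varpi_*[x]=\varpi_*[y]$ into an equality of morphisms $\mathds 1 \to \ho(X)$ in $\mathcal M^\circ(K)_\Q$, using the co-generation assumption. I would view $[x]$ and $[y]$ as morphisms $\mathds 1 \to \ho(X)$ via the identification $\CH_0(X)=\Hom(\mathds 1,\ho(X))$, and let $r:\ho(X) \twoheadrightarrow \ho_\varpi(X)$ denote the projection (which acts on zero-cycles as $\varpi_*$). By hypothesis there exists $n\geq 1$ such that the co-algebra morphism
$$T^{\leq n}r \,=\, \epsilon + r + r^{\otimes 2}\circ\delta +\cdots + r^{\otimes n}\circ\delta^{n-1} \,:\, \ho(X) \longrightarrow T^{\leq n}\ho_\varpi(X)$$
is split injective; composing with $[x]$ and $[y]$, it then suffices to show that $T^{\leq n}r\circ[x] = T^{\leq n}r\circ[y]$ and invoke split injectivity to conclude $[x]=[y]$ in $\CH_0(X)$.

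The heart of the argument is the componentwise computation. Here I would exploit the crucial fact that, for a closed point $x\in X$, one has $\delta_*[x]=[x]\times[x]$ and, more generally, $\delta^{k-1}_*[x]=[x]^{\times k}$ in $\CH_0(X^k)$. Combined with the componentwise formula $(\varpi^{\otimes k})_*(\alpha_1\times\cdots\times\alpha_k) = \varpi_*\alpha_1\times\cdots\times\varpi_*\alpha_k$, this yields
$$r^{\otimes k}\circ \delta^{k-1}\circ [x] \,=\, (\varpi_*[x])^{\times k} \qquad \text{in }\CH_0(X^k),$$
and likewise for $y$. Under the hypothesis $\varpi_*[x]=\varpi_*[y]$ each of these iterated exterior powers coincides, while the $k=0$ term is handled by $\epsilon_*[x]=1=\epsilon_*[y]$.

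Summing over $k$ then gives $T^{\leq n}r\circ[x] = T^{\leq n}r\circ[y]$, and a retraction of $T^{\leq n}r$ finishes the proof. I do not expect any real obstacle; in particular, the caveat of \S\ref{SS:0cycle} that $\CH_0$ is not a tensor functor is irrelevant here, because the compositions $\delta^{k-1}\circ[x] : \mathds 1 \to \ho(X)^{\otimes k}$ \emph{genuinely} compute exterior self-powers when $[x]$ is the class of a closed point. The conceptual takeaway is that co-generation by $\ho_\varpi(X)$ promotes the coarse invariant $\varpi_*[x]\in \CH_0(X)$ to a full faithful probe for the rational equivalence class of $[x]$, once one remembers all its iterated exterior self-powers as packaged in $T^{\leq n}r\circ[x]$.
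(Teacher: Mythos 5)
Your proposal is correct and follows essentially the same route as the paper's proof: both send $[x]$ under the co-generation morphism to the tuple $\bigl(1,\varpi_*[x],\varpi_*[x]\times\varpi_*[x],\dots\bigr)$ using $\delta^{k-1}_*[x]=[x]^{\times k}$, and then invoke split injectivity to conclude. No issues.
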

\begin{proof} First, we note that clearly $[x]=[y]$ implies $\varpi_*[x] = \varpi_*[y] $, irrespective of the co-generation assumption. Under the morphism $\ho(X) \rightarrow \operatorname{Sym}^* \ho_{\varpi}(X)$, the class of a point $x$ is mapped to $1 + (\varpi)_*[x] + (\varpi\otimes \varpi)_*\delta_*[x] + \cdots + (\varpi^{\otimes n})_* \delta^{n-1}_*[x] + \cdots$. Since $\delta^k_*[x] = [x]\times \cdots \times [x] \ \mbox{in}\ \CH_0(X^{k+1})$, we find that
	$$[x] \mapsto 1 + \varpi_*[x]+ \varpi_*[x]\times \varpi_*[x]+ \cdots + \varpi_*[x]\times\cdots \times \varpi_*[x] +\cdots.$$
	Now, under the assumption that  $\ho(X)$ is co-generated by $\ho_{\varpi}(X)$,  the morphism $\ho(X) \rightarrow \operatorname{Sym}^{\leq n} \ho_\varpi(X)$ is split injective for some $n>0$, and it is then apparent that $\varpi_*[x] = \varpi_*[y]$ implies $[x]=[y]$ in $\CH_0(X)$. 
\end{proof}

The following proposition shows that, for smooth projective varieties with transcendental cohomology generated in pure degree (\emph{e.g.}\ hyper-K\"ahler varieties), the existence of a unital grading on the birational motive of a smooth projective variety is equivalent to the existence of a co-multiplicative birational Chow--K\"unneth decomposition\,:

\begin{prop}\label{P:strictgradingBCK}  
	Let $X$ be a smooth projective variety 
	with $\HH^{*}_{\tr}(X)$ generated by $\HH^i_{\tr}(X)$\,; \emph{e.g.}, $X$ a hyper-K\"ahler variety and $i=2$, or $X$ an abelian variety and $i=1$.
	A decomposition $\ho(X) = \ho(X)_{(0)}\oplus \cdots \oplus \ho(X)_{(n)}$ defines  a unital grading on
	the  co-algebra object $\ho(X)$ if and only if, setting $\ho_{ki}(X) := \ho(X)_{(k)}$, the decomposition  
	$\ho(X) = \ho_0(X) \oplus \ho_i(X) \oplus \cdots \oplus \ho_{ni}(X)$ defines a co-multiplicative birational Chow--K\"unneth decomposition.
\end{prop}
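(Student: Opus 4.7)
The ``if'' direction is immediate from Definition~\ref{D:MBCK}: a co-multiplicative birational Chow--K\"unneth decomposition is defined precisely so that its induced decomposition is a unital grading on $\ho(X)$ as a co-algebra object. For the non-trivial ``only if'' direction, suppose $\ho(X)=\bigoplus_{k=0}^{N}\ho(X)_{(k)}$ is a unital grading and reindex by setting $\varpi_{ki}^X:=\varpi_{(k)}^X$ (and $\varpi_l^X:=0$ for $l\notin i\Z_{\geq 0}$). The orthogonality, completeness, and co-algebra compatibility conditions transfer verbatim. The only substantive point to check is cohomological identification, namely condition~(d) of Definition~\ref{D:BCK}: that $(\varpi_{(k)}^X)^*$ acts as the identity on $\HH^{ki}_{\tr}(X)$ and as zero on $\HH^l_{\tr}(X)$ for $l\neq ki$.

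Set $V_{(k)}:=(\varpi_{(k)}^X)^*\HH^*_{\tr}(X)$. I would extract three structural facts. First, since correspondences act on transcendental cohomology preserving cohomological degree, each $V_{(k)}$ is a graded subspace $V_{(k)}=\bigoplus_l V_{(k)}^l$ and $\HH^l_{\tr}(X)=\bigoplus_k V_{(k)}^l$ for every~$l$. Second, the unit condition $\epsilon_0:\ho(X)_{(0)}\stackrel{\sim}{\to}\mathds 1$ together with $\HH^*_{\tr}(\mathds 1)=\Q$ concentrated in degree $0$ gives $V_{(0)}=\HH^0_{\tr}(X)$, and in particular $V_{(k)}^0=0$ for $k\geq 1$. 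Third, the co-algebra grading condition, applied contravariantly via $\HH^*_{\tr}$ using the identity $\delta^*=\smile$ (cup product, which descends to the transcendental quotient since cupping with a coniveau class stays in the coniveau filtration), yields the compatibility $V_{(a)}\smile V_{(b)}\subseteq V_{(a+b)}$.

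The crucial step, which I expect to be the main obstacle, is to show $\HH^i_{\tr}(X)\subseteq V_{(1)}$. Once this is established, the cup-product compatibility combined with the generation hypothesis gives $\HH^{ki}_{\tr}(X)=(\HH^i_{\tr}(X))^{\smile k}\subseteq V_{(k)}$ for every~$k$, and the two direct sum decompositions $\bigoplus_k\HH^{ki}_{\tr}(X)=\HH^*_{\tr}(X)=\bigoplus_k V_{(k)}$ then force the equalities $V_{(k)}=\HH^{ki}_{\tr}(X)$. To establish the inclusion $\HH^i_{\tr}(X)\subseteq V_{(1)}$, I would take $\beta\in\HH^i_{\tr}(X)$ and argue that in its decomposition $\beta=\sum_{k\geq 1}\beta_{(k)}$ along the motivic grading the components $\beta_{(k)}$ for $k\geq 2$ must vanish. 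The key observation is that, by the generation hypothesis, $\HH^l_{\tr}(X)=0$ for $0<l<i$, so each $V_{(j)}$ with $j\geq 1$ has minimum cohomological degree at least $i$; paired with the fact that cup products of two or more such pieces produce cohomological degree at least $2i$, this strongly suggests that nontrivial classes in $V_{(k)}$ with $k\geq 2$ cannot occur in cohomological degree $i$. Turning this heuristic into a proof---possibly by descending induction on $k$ using the reduced iterated co-multiplication $\bar\delta^{k-1}$ restricted to $\ho(X)_{(k)}$, and simultaneously ruling out the possibility of an anomalous scaling where $V_{(1)}$ would first appear in cohomological degree $ji$ with $j>1$---is the delicate part of the argument.
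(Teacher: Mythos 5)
Your reduction is sound as far as it goes: the ``if'' direction is indeed immediate, your three structural facts (gradedness of each $V_{(k)}$, the identification $V_{(0)}=\HH^0_{\tr}(X)$, and $V_{(a)}\smile V_{(b)}\subseteq V_{(a+b)}$) are correct, and you rightly observe that the single inclusion $\HH^i_{\tr}(X)\subseteq V_{(1)}$ would finish the proof by a dimension count. But that inclusion is the entire mathematical content of the proposition, and you do not prove it; you say so yourself. Moreover, the heuristic you offer for it is circular: the assertion ``cup products of two or more such pieces produce degree at least $2i$, so classes of $V_{(k)}$ with $k\geq 2$ cannot occur in degree $i$'' tacitly assumes that $V_{(k)}$ is contained in, or at least detected by, products of lower-graded pieces. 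That is precisely the \emph{strictness}/co-generation property of \S\ref{SS:strictgrading}, which is \emph{not} part of the definition of a unital grading: a unital grading only constrains where the co-multiplication of $\ho(X)_{(k)}$ lands, and a ``primitive element sitting in motivic degree $k\geq 2$'' is perfectly compatible with the unital-grading axioms in the abstract. So the step you flag as ``delicate'' is a genuine gap, not a routine verification.

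The paper closes it by working dually. One passes to the co-algebra $\HH^*_{\tr}(X)^\vee$ (co-multiplication adjoint to cup product) and computes its primitives: if $\tau^\vee\neq 0$ in $\HH^{ki}_{\tr}(X)^\vee$ with $k>1$, the generation hypothesis lets one write a class $\tau$ with $\tau^\vee(\tau)=1$ as $\sum\sigma_1\cup\cdots\cup\sigma_k$ with $\sigma_r\in\HH^i_{\tr}(X)$, and Poincar\'e duality gives $(\delta_*\tau^\vee)\big(\sum(\sigma_1\cup\cdots\cup\sigma_{k-1})\times\sigma_k\big)=\tau^\vee(\tau)=1$, so $\bar\delta_*\tau^\vee\neq 0$. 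Hence the primitives of $\HH^*_{\tr}(X)^\vee$ are exactly $\HH^i_{\tr}(X)^\vee$, and Sweedler's Lemma~11.2.1 on pointed irreducible graded co-algebras, combined with the compatibility of the motivic and cohomological gradings, is then invoked to identify the graded pieces. This input --- that a nonzero functional on $\HH^{ki}_{\tr}(X)$ pairs nontrivially against a decomposable class --- is genuinely different from, and cannot be replaced by, your degree-counting heuristic; it is where the hypothesis that $\HH^*_{\tr}(X)$ is generated by $\HH^i_{\tr}(X)$ actually enters. If you wish to remain in your contravariant framework, you must at minimum import this duality computation; as written, the crucial inclusion remains unproved.
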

\begin{proof}
	Clearly,
	we only need to check that if  $\ho(X) = \ho(X)_{(0)}\oplus \cdots \oplus \ho(X)_{(n)}$ defines a unital grading, then $\ho(X) = \ho_0(X) \oplus \ho_i(X) \oplus \cdots \oplus \ho_{ni}(X)$ with $\ho_{ki}(X) := \ho(X)_{(k)}$ defines a birational Chow--K\"unneth decomposition. 
	Note that a unital grading on $\ho(X)$ defines a unital grading on the co-algebra $\HH^*_{\mathrm{tr}}(X)^\vee$, \emph{i.e.}, $\HH^*_{\mathrm{tr}}(X)^\vee$ is pointed irreducible and graded. 
	By \cite[Lem.~11.2.1]{sweedler}, it is enough to check that $\HH^i_{\mathrm{tr}}(X)^\vee $ consists exactly of the primitive elements of the pointed irreducible co-algebra
	$\HH^*_{\mathrm{tr}}(X)^\vee $, \emph{i.e.}, that$$\HH^i_{\mathrm{tr}}(X)^\vee = \ker (\bar \delta_* :
	\HH^*_{\mathrm{tr}}(X)^\vee \to \HH^*_{\mathrm{tr}}(X)^\vee \otimes \HH^*_{\mathrm{tr}}(X)^\vee),$$
	where $\bar \delta$ is the reduced co-multiplication associated to any choice of a degree-1 zero-cycle on $X$ (see \S \ref{SS:co-mult} and~\eqref{E:it-red-comult}).
	The inclusion $\subseteq$ is clear since $\bar \delta_*$ is graded. Regarding the converse inclusion, still using that $\bar \delta_*$ is graded, it is enough to show that if $\tau^\vee \neq 0 \ \mbox{in}\ \HH^{ki}_{\mathrm{tr}}(X)^\vee$ for some $k>1$, then $\bar \delta_*\tau^\vee \neq 0$. By the generation assumption, we may write $\tau = \sum \sigma_1\cup \cdots \cup \sigma_k$ for some $\sigma_r$ in $\HH^{i}_{\mathrm{tr}}(X)$. But then 
	$({\delta}_*\tau^\vee)(\sum (\sigma_1\cup \cdots\cup \sigma_{k-1}) \times \sigma_k)
	= \tau^\vee({\delta}^*(\sum (\sigma_1\cup \cdots\cup \sigma_{k-1}) \times \sigma_k)) = \tau^\vee (\tau)  = 1$, where the first equality comes from Poincar\'e duality. Hence, ${\delta}_*\tau^\vee$ has a non-zero component of bi-degree $((k-1)i, i)$, and it follows that ${\bar\delta}_*\tau^\vee \neq 0$.
\end{proof}

\subsection{On the co-generation of the birational motive of a hyper-K\"ahler variety} \label{SS:cogeneration-hK}
Let now $X$ be a hyper-K\"ahler variety of dimension~$2n$. Assuming the generalized Hodge conjecture for~$X$, the Hodge structures $\HH^{2k}_{\tr}(X)$ are generated by $\sigma^k$ for a generator $\sigma$ of $\HH^0(X,\Omega^2_X)$ and we also have $\HH^{2k+1}_{\tr}(X)=0$ for all $k$. Thus, conjecturally, cup-product induces a surjection, with kernel supported in codimension 1\,:
$$\xymatrix{\operatorname{Sym}^{\leq n}\HH^2_{\tr}(X) \ar@{->>}[r] & \HH^{*}_{\tr}(X).}$$
Combining Conjecture~\ref{conj:BMCK} with Conjecture~\ref{C:cogeneration-general}, and taking into account that the kernel is supported in codimension 1, suggests the following (slight) strengthening of Conjecture~\ref{conj2:MBCK-bis}\,:

\begin{conj}\label{C:cogeneration}
	Let $X$ be a hyper-K\"ahler variety of dimension $2n$ and assume $\ho(X)$ admits a co-multiplicative birational Chow--K\"unneth decomposition
	 $\ho(X) = \ho_0(X) \oplus \ho_2(X) \oplus \cdots \oplus \ho_{2n}(X)$. Then this unital grading  is a strict grading. More strongly, the graded co-algebra morphism 
	$$\xymatrix{\ho(X) \ar[r]^{\sim\qquad} & \operatorname{Sym}^{\leq n} \ho_2(X)}$$ co-induced by the graded split surjection $\ho(X) \twoheadrightarrow \ho_2(X)$ is an isomorphism.
\end{conj}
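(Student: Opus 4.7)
The plan is, granted the co-multiplicative birational Chow--K\"unneth decomposition $\ho(X) = \ho_0(X) \oplus \ho_2(X) \oplus \cdots \oplus \ho_{2n}(X)$, to prove directly the stronger claim that the graded co-algebra morphism
$$\Phi : \ho(X) \longrightarrow \operatorname{Sym}^{\leq n} \ho_2(X)$$
co-induced by the split surjection $\ho(X) \twoheadrightarrow \ho_2(X)$ is an isomorphism; the strictness of the grading will then follow from Proposition~\ref{P:strictgrading}. Since $\Phi$ is graded, by Lemma~\ref{lem:birat-surj} applied in each degree it suffices to verify bijectivity of $\Phi_*$ on $\CH_0$-groups over the universal domain~$\Omega$.

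Surjectivity is essentially formal from the hypothesis. For a closed point $x \in X(\Omega)$, the identity $\bar\delta^{k-1}_*[x] = ([x]-[o])^{\times k}$ from \eqref{E:co-rad-smash}, combined with $(\varpi_2^X)_*[o] = 0$, yields $(\varpi_2^X)^{\otimes k}_*\bar\delta^{k-1}_*[x] = [x]_2^{\times k}$, where $[x]_2 := (\varpi_2^X)_*[x]$ denotes the degree-one component of $[x]$. By polarization the diagonal tensors $[x]_2^{\times k}$ span $\operatorname{Sym}^k \CH_0(X_\Omega)_{(1)}$, and the latter coincides with $\CH_0(\operatorname{Sym}^k \ho_2(X)_\Omega)$ since $\Omega$ is algebraically closed.

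Injectivity is the substance of the conjecture. By Proposition~\ref{P:strictgrading} it reduces inductively on $k$ to the base case that $\bar\delta : \ho_4(X) \to \ho_2(X) \otimes \ho_2(X)$ is split injective. On transcendental cohomology this holds automatically, since Bogomolov's theorem gives an injection $\HH^4_{\mathrm{tr}}(X) \hookrightarrow \operatorname{Sym}^2 \HH^2_{\mathrm{tr}}(X)$ and Proposition~\ref{P:strictgradingBCK} identifies these with the transcendental cohomologies of $\ho_4(X)$ and $\ho_2(X)\otimes \ho_2(X)$ respectively. The task is therefore to lift this cohomological splitting to rational equivalence, i.e., to produce a correspondence $\mu \in \CH^{2n}(X^2 \times X)$ such that $\mu_* \circ \bar\delta_*$ acts as a non-zero scalar on $\CH_0(X_\Omega)_{(2)}$.

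The main obstacle, and the reason the conjecture remains open in general, is precisely this geometric lifting: there is no universal recipe for producing such a $\mu$. In the four cases treated in Theorem~\ref{thm:main2}, one exploits distinguished extra structure: Voisin's rational self-map on the Fano variety of lines, the K3-to-moduli correspondence of Markman (via Theorem~\ref{thm:moduli}) for moduli spaces of stable objects, the Deninger--Murre multiplication-by-$m$ endomorphisms for generalized Kummer varieties, and the nested Hilbert scheme correspondence for $\operatorname{Hilb}^n(S)$. For a general projective hyper-K\"ahler manifold, and in particular for the sporadic OG6 and OG10 examples, no analogous structure is visible, and one would likely need either new geometric input --- for instance a suitable rational self-correspondence on moduli of stable objects in the Kuznetsov component of cubic fourfolds --- or an abstract motivic deformation argument propagating from the locally complete families where the conjecture is known.
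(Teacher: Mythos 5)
This statement is a conjecture: the paper does not prove it in general, but only establishes its conclusion for the families \eqref{hilb}--\eqref{fano} in Theorem~\ref{thm:cogeneration}, and you correctly recognize that the general case is open and that each known case rests on special geometric input (producing a correspondence $\mu$ inverting $\bar\delta$). At that level there is no disagreement. However, two of the reductions you assert along the way are wrong, and they misplace where the difficulty actually lies. First, surjectivity is \emph{not} formal. Your polarization step fails: the identity $(\varpi_2^X)^{\otimes k}_*\bar\delta^{k-1}_*[x]=\big((\varpi_2^X)_*[x]\big)^{\times k}$ holds only for $x$ a closed point, and the set $\{(\varpi_2^X)_*[x] : x\in X(\Omega)\}$ is merely a spanning \emph{subset} of $\CH_0(X_\Omega)_{(2)}$, not a subspace; diagonal powers of a spanning subset need not span the symmetric power (already $e_1^{\otimes 2}$ and $e_2^{\otimes 2}$ do not span $\operatorname{Sym}^2\Q^2$). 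For a general $0$-cycle $\beta=\sum_i a_i[x_i]$ one has $\bar\delta^{k-1}_*\beta=\sum_i a_i([x_i]-[o])^{\times k}$, which is not $\big(\sum_i a_i([x_i]-[o])\big)^{\times k}$, so arbitrary diagonal powers $v^{\times k}$ with $v\in\CH_0(X_\Omega)_{(2)}$ are not visibly in the image of $\Phi_*$. This is precisely why the conjecture upgrades ``strict grading'' (split injectivity) to the ``more strongly'' isomorphism statement, and why the proof of Theorem~\ref{thm:cogeneration} constructs \emph{two-sided} inverses $\mu^k$ satisfying both $\mu^k_*\circ\bar\delta^{k-1}_*=k!\,\mathrm{id}$ and $\bar\delta^{k-1}_*\circ\mu^k_*=k!\,\mathrm{id}$; in case \eqref{hilb}, for instance, surjectivity uses the Beauville--Voisin relation $([x]-[o_S])^{\times 2}=0$ on $S\times S$ to expand $(a_1+\cdots+a_k)^{\times k}$ into the symmetrization of $a_1\times\cdots\times a_k$.

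Second, injectivity does not reduce to the single base case $\bar\delta:\ho_4(X)\to\ho_2(X)^{\otimes 2}$. The induction in Proposition~\ref{P:strictgrading} requires, at \emph{every} level $k\geq 2$, that $\bar\delta: M_{(k+1)}\to\bigoplus_{i=1}^{k}M_{(k+1-i)}\otimes M_{(i)}$ be split injective; the degree-$4$ case alone says nothing about $\ho_6(X),\dots,\ho_{2n}(X)$ and suffices only when $n=2$ (e.g.\ for the Fano variety of lines). Relatedly, the cohomological counterpart is not ``automatic'' from Bogomolov's theorem: Bogomolov gives injectivity of $\operatorname{Sym}^{\leq n}\HH^2(X,\Q)\to\HH^{*}(X,\Q)$, whereas what you need is surjectivity of $\operatorname{Sym}^k\HH^2_{\tr}(X)\to\HH^{2k}_{\tr}(X)$, which the paper only asserts assuming the generalized Hodge conjecture (see \S\ref{SS:cogeneration-hK}). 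So while your overall assessment of the conjecture's status is sound, the claim that only a single correspondence $\mu\in\CH^{2n}(X^2\times X)$ acting on $\CH_0(X_\Omega)_{(2k)}$ for $k=2$ is missing substantially understates what remains open.
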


	\begin{thm}\label{thm:cogeneration}
	The birational motives of the hyper-K\"ahler varieties \eqref{hilb}, \eqref{moduli}, \eqref{kum} and \eqref{fano}, equipped with the co-multiplicative birational Chow--K\"unneth decomposition provided by Theorem~\ref{thm:BMCK}, satisfy the conclusion of Conjecture~\ref{C:cogeneration}.
\end{thm}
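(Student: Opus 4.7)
My plan is to verify Conjecture~\ref{C:cogeneration} case by case, exploiting the geometric model behind each family. By Proposition~\ref{P:strictgrading}, strictness of the grading amounts to showing that the natural graded co-algebra morphism
$$ \Phi_X : \ho(X) \longrightarrow \operatorname{Sym}^{\leq n} \ho_2(X), $$
co-induced by the split surjection $\ho(X) \twoheadrightarrow \ho_2(X)$, is split injective; the stronger statement requires $\Phi_X$ to be an isomorphism. Case \eqref{moduli} reduces immediately to \eqref{hilb}: the co-algebra isomorphism $\ho(\operatorname{M}_\sigma(v)) \simeq \ho(\operatorname{Hilb}^n(S))$ provided by Theorem~\ref{thm:moduli} transports the co-multiplicative birational Chow--K\"unneth decomposition between the two sides (this is how the decomposition on $\operatorname{M}_\sigma(v)$ was constructed in the proof of Theorem~\ref{thm:BMCK}), and hence transports $\Phi_X$ as well.

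For cases \eqref{hilb} and \eqref{kum}, the birational motive of $X$ is literally a symmetric (or invariant) tensor power of a simpler motive, and $\Phi_X$ unwinds to the binomial expansion of a symmetric power. Concretely in case \eqref{hilb}, $\operatorname{Hilb}^n(S)$ is birational to $S^{(n)}$, so by \S\ref{SS:quotient},
$$ \ho(\operatorname{Hilb}^n(S)) = \operatorname{Sym}^n \ho(S) = \operatorname{Sym}^n \bigl( \mathds{1} \oplus \ho_2(S) \bigr) \;\simeq\; \bigoplus_{k=0}^{n} \operatorname{Sym}^k \ho_2(S), $$
the last identification being the binomial decomposition for symmetric powers of a direct sum in a $\Q$-linear pseudo-abelian $\otimes$-category. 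Matching the $k=1$ summand with $\ho_2(\operatorname{Hilb}^n(S))$ -- consistently with Remark~\ref{R:hilb} -- a formal compatibility check between symmetric powers and diagonal embeddings shows that this isomorphism coincides with $\Phi_{\operatorname{Hilb}^n(S)}$. Case \eqref{kum} is analogous: $K_n(A)$ is birational to $A^n/\mathfrak{S}_{n+1}$ with the action described in the proof of Theorem~\ref{thm:BMCK}, and combining K\"unnemann's isomorphism $\h(A) \cong \operatorname{Sym}^* \h^1(A)$ (Theorem~\ref{T:K}) with the $\mathfrak{S}_{n+1}$-invariant computation identifies $\ho(K_n(A))$ with $\operatorname{Sym}^{\leq n} \ho_2(K_n(A))$.

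Case \eqref{fano} is the geometrically substantial one. Here $F = F(Y)$ has $n = 2$, and the decomposition $\ho(F) = \ho_0(F) \oplus \ho_2(F) \oplus \ho_4(F)$ is cut out by the eigenspaces of Voisin's self-map $\varphi_*$. Since the reduced co-multiplication $\bar\delta$ vanishes on $\ho_0(F) \oplus \ho_2(F)$ by the general formula of \S\ref{SS:co-mult}, and since the remaining pieces $\mathds{1}$, $\ho_2(F)$, $\operatorname{Sym}^2 \ho_2(F)$ of $\operatorname{Sym}^{\leq 2}\ho_2(F)$ are already covered by $\ho_0(F)\oplus\ho_2(F)$ and the image of $\bar\delta$, the conjecture boils down to the split injectivity of
$$ \bar\delta : \ho_4(F) \longrightarrow \ho_2(F) \otimes \ho_2(F). $$
Following the sketch of the introduction, I would construct a correspondence $\mu \in \CH^4(F^2 \times F)$ from the \emph{triangle construction} on the cubic fourfold: for two incident lines $l_1, l_2 \subset Y$ spanning a plane $\Pi \subset \PP^5$, one writes $Y \cap \Pi = l_1 + l_2 + l_3$ and sends $(l_1,l_2)$ to $[l_3]-[o]$. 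Using the explicit generators of $\ho_4(F)$ by triangle classes $[l_1]+[l_2]+[l_3]-3[o]$ from Remark~\ref{R:fano}, an intersection-theoretic computation should show that $\mu \circ \bar\delta$ acts as multiplication by $2$ on $\ho_4(F)$, which gives the desired split injectivity. Co-commutativity of $\bar\delta$ then promotes the image to a direct summand of $\operatorname{Sym}^2 \ho_2(F)$, upgrading $\Phi_F$ to an isomorphism.

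The principal obstacle is the construction of $\mu$ in case \eqref{fano} together with the verification of $\mu \circ \bar\delta = 2 \operatorname{id}_{\ho_4(F)}$. This requires a careful intersection-theoretic analysis on the incidence variety of pairs of incident lines, with attention to the boundary locus where two lines coincide or fail to meet transversally, and a tracking of the $\varphi_*$-eigenspaces through the construction. By contrast, cases \eqref{hilb}, \eqref{moduli} and \eqref{kum} are essentially formal consequences of the symmetric-power nature of the underlying birational model once the explicit decompositions of Theorem~\ref{thm:BMCK} are laid out.
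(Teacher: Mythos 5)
Your overall reduction -- producing an inverse (or a splitting) to $\bar\delta^{k-1}:\ho_{2k}(X)\to\operatorname{Sym}^k\ho_2(X)$ for each $k$ -- is the right frame, and your treatment of case \eqref{moduli} by transport along Theorem~\ref{thm:moduli} matches the paper. Case \eqref{hilb} is also essentially right: the binomial decomposition of $\operatorname{Sym}^n(\mathds 1\oplus\ho_2(S))$ is where the paper lands too, except that the ``formal compatibility check'' you defer is precisely where the paper does its (short but genuine) work: it writes down the explicit inverse $\mu^k$ via the incidence cycle $Z$ and verifies $\bar\delta^{k-1}[x_1,\ldots,x_k,o,\ldots,o]=\sum_{\sigma}a_{\sigma(1)}\times\cdots\times a_{\sigma(k)}$ using $(a_i)^{\times 2}=0$, i.e.\ the Beauville--Voisin relation already encoded in the unital grading on $\ho(S)$.

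The genuine gaps are in cases \eqref{kum} and \eqref{fano}. For \eqref{kum}, ``analogous and essentially formal'' understates the problem: after K\"unnemann one must show that the specific map $\bar\delta^{k-1}$ identifies $\bigl(\operatorname{Sym}^{2k}(\h_1(A)\otimes V_{\mathrm{std}})\bigr)^{\mathfrak S_{n+1}}$ with $\operatorname{Sym}^k\bigl((\operatorname{Sym}^2(\h_1(A)\otimes V_{\mathrm{std}}))^{\mathfrak S_{n+1}}\bigr)$ \emph{with inverse induced by a correspondence}; this is a nontrivial plethysm statement plus a lifting problem, and the paper instead argues via the cohomological isomorphism on the $(2k,0)$-part together with the machinery of generically defined cycles for powers of $A$ (\cite[Thm.~1, Prop.~2.13]{Vial-Pisa}) -- an input your sketch does not supply or replace. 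For \eqref{fano}, your proposed $\mu$ (the residual line $l_3$ of a pair of incident lines $(l_1,l_2)$) is not the paper's correspondence and is dimensionally problematic: the graph of the residual-line map lives over the incidence \emph{divisor} in $F\times F$ and so defines a class in $\CH^5(F^2\times F)$, which acts as zero on $\CH_0(F^2)$; some further modification would be needed before one could even attempt the computation you flag as ``the principal obstacle.'' The paper instead takes $\mu=\delta^*\circ\operatorname{Sym}^2 L$ for the Shen--Vial cycle $L$ with $L_*l=[S_o]-[S_l]$, and the identity $\mu\circ\bar\delta=2\,\mathrm{id}$ on $\CH_0(F)_{(4)}$ follows from the known relations $[S_o]^2=5[o]$ and $[S_{l_i}]\cdot[S_{l_j}]=6[o]+[l_k]-[l_i]-[l_j]$ for a triangle (and, for the primary proof, one can avoid $\mu$ altogether by citing that $\delta^*:\operatorname{Sym}^2\h^2(F)\to\h^4(F)$ is an isomorphism and dualizing). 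As written, your argument establishes the theorem only in cases \eqref{hilb} and \eqref{moduli}.
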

\begin{proof}
	In the general situation where $M = M_{(0)} \oplus \cdots \oplus M_{(n)}$ is a unital graded co-algebra object in a $\otimes$-category $\mathcal C$ over a ring $R$, recall that the induced co-algebra morphism $M\to T^* M_{(1)}$ is graded and that the composition $M_{(k)} \hookrightarrow M \to M^{\otimes k} \to (M_{(1)})^{\otimes k}$ is nothing but the restriction of $\bar \delta^{k-1}$ to $M_{(k)}$\,; see \S \ref{S:co-alg}. 
	In addition, if $R$ contains $\Q$ and if $\mathcal  C$ is pseudo-abelian, then the resulting graded morphism of co-algebra objects $M \to T^{\leq n}M_{(1)}$ factors through $\operatorname{Sym}^{\leq n}M_{(1)}$.
	Therefore, in order to show that $M \to \operatorname{Sym}^{\leq n}M_{(1)}$ is an isomorphism of graded co-algebras, it is enough to produce inverses to the morphisms $\bar \delta^{k-1} : M_{(k)} \to \operatorname{Sym}^{k}M_{(1)}$ for all $k>1$.
	
	In the case of a hyper-K\"ahler variety $X$ with a co-multiplicative birational Chow--K\"unneth decomposition
	$\ho(X) = \ho_0(X) \oplus \ho_2(X) \oplus \cdots \oplus \ho_{2n}(X)$, it thus suffices to produce for all $k>1$ morphisms $\frac{1}{k!}\mu^k :  \operatorname{Sym}^{k}\ho_2(X) \to \ho_{2k}(X)$
	 inverse to $\bar \delta^{k-1} : \ho_{2k}(X) \to \operatorname{Sym}^{k}\ho_2(X) $, or, equivalently by Lemma~\ref{lem:zero}, such that $\mu^k_* \circ \bar{\delta}^{k-1}_*=k! \,\mathrm{id} : \CH_0(X)_{(2k)} \to \CH_0(X)_{(2k)}$ and $\bar{\delta}^{k-1}_* \circ \mu^k_*=k! \,\mathrm{id}  : \CH_0( \operatorname{Sym}^{k}\ho_2(X)) \to  \CH_0(\operatorname{Sym}^{k}\ho_2(X))$. Here the correspondence $\bar \delta^{k-1}$ is made explicit in~\eqref{E:it-red-comult}.
	 (The reason for introducing the factor $k!$ lies in \eqref{E:Pontryagin} -- this is also related to the notion of divided power Hopf algebra, although $\delta$ and $\mu$ do not endow $\operatorname{Sym}^{\leq n} \ho_2(X)$ with the structure of a bi-algebra.)
	 \medskip
	
Case \eqref{hilb}. We take on the notation of Remark~\ref{R:hilb}. 
We start by recalling that, defining  $\CH_0(\operatorname{Hilb}^n(S))_{(2k)} := \CH_0(\ho_{2k}(\operatorname{Hilb}^n(S)))$, we have
$$\CH_0(\operatorname{Hilb}^n(S))_{(2k)} = \langle\, (Z_*o)^{n-k}\cdot \prod_{i=1}^k (Z_*x_i-Z_*o) \ \big\vert \ x_1,\ldots, x_k \in S \, \rangle,$$ which in case $k=1$ takes the simple form $$\CH_0(\operatorname{Hilb}^n(S))_{(2)}  = \langle [x,o,\ldots,o]-[o,o,\ldots,o] \ \big\vert \ x\in S\rangle.$$
We also note that the idempotent $\varpi_2$ cutting $\ho_2(\operatorname{Hilb}^n(S)))$ acts explicitly on $\CH_0(\operatorname{Hilb}^n(S))$ by $$(\varpi_2)_*[x_1,\ldots, x_n] = \sum_{i=1}^n\Big( [x_i,o,\ldots,o] - [o,\ldots, o] \Big).$$
On the one hand, we define 
$$\mu^k : \ho_2(\operatorname{Hilb}^n(S))^{\otimes k} \to \ho_{2k}(\operatorname{Hilb}^n(S))$$ as the birational correspondence sending 
$$a_1\times \cdots\times a_k \ \mapsto \ (Z_*o)^{n-k} \cdot \prod_{i=1}^k Z_*a_i,$$ where $a_i:=[x_i,o,\ldots,o]-[o,o,\ldots,o] \in \CH_0(\operatorname{Hilb}^n(S))_{(2)}$. 
Clearly, $\mu^k$ is invariant under the action of the symmetric group $\mathfrak{S}_k$ and thereby provides a morphism 
$$\mu^k : \operatorname{Sym}^k\ho_2(\operatorname{Hilb}^n(S)) \to \ho_{2k}(\operatorname{Hilb}^n(S)).$$
On the other hand,
 we have $\bar \delta^{k-1} [x_1,\ldots, x_n] = ([x_1,\ldots,x_n] - [o,\ldots,o])^{\times k}$ (see \eqref{eq:redcomult} below). Now since $\bar \delta^{k-1}$ maps $\ho_{2k}(\operatorname{Hilb}^n(S))$ into the direct summand $\operatorname{Sym}^k\ho_2(\operatorname{Hilb}^n(S))$, we find that 
\begin{align*}
\bar \delta^{k-1} [x_1,\ldots, x_k,o,\ldots,o] &= \big((\varpi_2)_*([x_1,\ldots, x_k,o,\ldots,o] - [o,\ldots,o])  \big)^{\times k}\\
&= \big(a_1 +\cdots + a_k\big)^{\times k}\\
&= \sum_{\sigma \in \mathfrak{S}_k} a_{\sigma(1)} \times \cdots \times a_{\sigma(k)},
\end{align*}
where again $a_i := [x_i,o,\ldots,o] - [o,\ldots,o]$ and where the last equality is due to $(a_i)^{\times 2}=0$ which itself follows from the fact (see the proof of Theorem~\ref{thm:BMCK}\eqref{hilb}) that $([x_i]-o)\times ([x_i] - o) = 0$ in $\CH_0(S\times S)$.
It is then apparent that $\frac{1}{k!}\mu^k$ provides an inverse to $\bar \delta^{k-1}$. 
\medskip

Case \eqref{moduli} follows directly from case \eqref{hilb} and Theorem~\ref{thm:moduli}.\medskip

Case \eqref{kum}. 
Recall from the proof of Theorem~\ref{thm:MSD} that the hyper-K\"ahler variety $K_n(A)$ is birational to $A^n/\mathfrak S_{n+1}$ so that it is enough to establish the theorem for $A^n/\mathfrak S_{n+1}$. First we note that, for $k\leq n$, cup-product 
$$(\delta^{k-1})^*\ :\ \operatorname{Sym}^k \HH^2(A^n/\mathfrak S_{n+1}) \longrightarrow \HH^{2k}(A^n/\mathfrak{S}_{n+1})$$
is a morphism of Hodge structures that is an isomorphism on the degree $(2k,0)$ part of the Hodge decomposition. Second, after fixing a polarization on $A$, $\delta^{k-1}$ is generically defined for powers of~$A$ in the sense of \cite[Def.~2.1]{Vial-Pisa}. In addition, by \cite[Prop.~2.13]{Vial-Pisa}, the orthogonal projectors on the sub-Hodge structures of $\operatorname{Sym}^k \HH^2(A^n/\mathfrak S_{n+1})$ and $\HH^{2k}(A^n/\mathfrak{S}_{n+1})$ generated by forms of degree $(2k,0)$ are induced by generically defined cycles $p$ and $q$ respectively. Third, since the Hard Lefschetz isomorphisms $\HH^r(A^n) \stackrel{\sim}{\longrightarrow} \HH^{4n-r}(A^n)$ and their inverses are induced by generically defined cycles for powers of $A$, we see (as in the proof of \cite[Prop.~2.13]{Vial-Pisa}) that the inverse of the isomorphism 
$$q^*(\delta^{k-1})^*p^*\ : \ p^* \operatorname{Sym}^k \HH^2(A^n/\mathfrak S_{n+1}) \stackrel{\sim}{\longrightarrow} q^*\HH^{2k}(A^n/\mathfrak{S}_{n+1})$$
is induced by a generically defined cycle.
We may then conclude from \cite[Thm.~1]{Vial-Pisa} that ${\delta}^{k-1}$ induces an isomorphism on zero-cycles $ \CH_0(\h_{2k}(A^n/\mathfrak S_{n+1})) \stackrel{\sim}{\longrightarrow} \CH_0(\operatorname{Sym}^k\h_2 (A^n/\mathfrak S_{n+1}))$ with inverse induced by a correspondence, where $\h_i(A^n) := \h^{4n-i}(A^n)$ is the Deninger--Murre Chow--K\"unneth decomposition of $A^n$.  We then conclude from Lemma~\ref{lem:zero} that $$\bar \delta^{k-1} : \ho_{2k}(A^n/\mathfrak S_{n+1}) \stackrel{\delta^{k-1}}{\longrightarrow } \operatorname{Sym}^k\ho (A^n/\mathfrak S_{n+1}) \to \operatorname{Sym}^k\ho_2 (A^n/\mathfrak S_{n+1})$$ is an isomorphism.
\medskip

Case \eqref{fano}. Let $F:=F(Y)$ be the Fano variety of lines on a smooth cubic fourfold $Y\subseteq \PP^5$. 
We consider the self-dual Chow--K\"unneth decomposition $\h(F) = \h^0(F) \oplus \cdots \oplus \h^8(F)$ of \cite[\S A.2.1]{FLV-Franchetta} (which yields the co-multiplicative birational Chow--K\"unneth decomposition of Theorem~\ref{thm:BMCK}\eqref{fano}, see Remark~\ref{R:fano})\,; 
it is \emph{generically defined} in the sense that the idempotents $\pi^i$ defining $\h^i(F)$ are specializations of cycles on $\mathcal F \times_B \mathcal F$, where $\mathcal F \to B$ is the relative Fano variety of the universal smooth cubic fourfold.
 The multiplication morphism $\delta^* : \operatorname{Sym}^2\h^2(F) \to\h^4(F)$ is an isomorphism by \cite[Thm.~2.18(v)]{FLV-Franchetta2}. Dualizing, we obtain an isomorphism $\delta_* : \h^4(F) \to \operatorname{Sym}^2\h^6(F)$, but then $\bar \delta : \ho_4(F) \to \operatorname{Sym}^2\ho_2(F)$ is nothing but the image of $\delta_*$ under the functor $\mathcal M^{\mathrm{eff}} \to \mathcal M^\circ$ and is thus an isomorphism. 
 
 Let us however give an alternate proof that determines the inverse of $\bar \delta$.
For that purpose, we consider the cycle $L\in \CH^2(F\times F)$ of \cite[Part~3]{SV}\,; it is generically defined, its cohomology class is the Beauville--Bogomolov--Fujiki form and its action on points $l\in F(Y)$ is given by $L_*l = [S_o]-[S_l]$, where $S_l$ is the surface of lines meeting the line $l$.
We claim that the morphism 
$$\frac{1}{2}\mu\ : \ \xymatrix{\operatorname{Sym}^2\h^6(F) \ar[rr]^{\frac{1}{2}\Sym^2L} & &\operatorname{Sym}^2\h^2(F) \ar[rr]^{\quad \delta^*} &&\h^4(F)}$$ is an isomorphism of Chow motives, and that the inverse of the induced isomorphism $$\frac{1}{2}\mu : \operatorname{Sym}^2\ho_2(F) \stackrel{\sim}{\longrightarrow} \ho_4(F)$$ on birational motives is nothing but the reduced co-multiplication $\bar \delta : \ho_4(F) \to  \operatorname{Sym}^2\ho_2(F)$.
First, $\mu$ is indeed an isomorphism of Chow motives since as recalled above the morphism $\delta^* : \operatorname{Sym}^2\h^2(F) \to\h^4(F)$ is known to be an isomorphism, and since the morphism $L : \h^6(F) \to \h^2(F)$ is an isomorphism due to the fact that it is an isomorphism modulo homological equivalence by \cite[Prop.~1.3]{SV} (its inverse is given by $\frac{1}{75}L^3$) combined with the generalized Franchetta conjecture for $F\times F$~\cite[Thm.~1.10]{FLV-Franchetta}.

Second, 
 we claim that $\mu$ satisfies $\mu \circ \bar \delta = 2\,\mathrm{id}$ on $\CH_0(F)_{(4)}$, which by Lemma~\ref{lem:zero} implies that $\bar \delta$ is the inverse of the isomorphism $\frac{1}{2}\mu$. 
 Recall from \cite[\S 20]{SV} that $\CH_0(F)_{(4)}$ is killed by $L_*$ and  is spanned by cycles of the form $[l_1]+[l_2]+[l_3]-3[o]$, where $(l_1,l_2,l_3)$ form a triangle, \emph{i.e.}, there is a plane $\Pi \subset \PP^5 $ such that $\Pi\cap Y = l_1 + l_2 + l_3$. 
 With $\sum_i [S_{l_i}] = 3[S_o]$ and $\bar \delta_* [l] = [(l,l)] - [(l,o)] - [(o,l)]$ in mind, we now compute 
 \begin{align*}
 	\mu\circ \bar \delta ([l_1]+[l_2]+[l_3]-3[o]) & = \sum_i ([S_o] - [S_{l_i}])^2\\
&=   3[S_o]^2 - 2[S_o]\cdot \sum_i [S_{l_i}] + \big( \sum_i [S_{l_i}]\big)^2 - 2 \sum_{i<j} [S_{l_i}]\cdot [S_{l_j}]\\	
& = 6[S_o]^2 -  2 \sum_{i<j} [S_{l_i}]\cdot [S_{l_j}].
 \end{align*}
Now, by \cite[Prop.~20.7(i)]{SV}, we have $[S_o]^2 = 5[o]$ and for a triangle $(l_1,l_2,l_3)$ we have $[S_{l_i}]\cdot [S_{l_j}] = 6[o] + [l_k] - [l_i] - [l_j]$ for $\{i,j,k\} = \{1,2,3\}$. It follows that 
$$	\mu\circ \bar \delta ([l_1]+[l_2]+[l_3]-3[o])  = 2([l_1]+[l_2]+[l_3]-3[o]),$$
as claimed.
\end{proof}

\begin{rmk}
Theorem~\ref{thm:cogeneration} and its proof echoes \cite[Conj.~2]{SV}, where it is conjectured that for any hyper-K\"ahler variety $X$ of dimension $2n$ there exists a canonical cycle $L\in \CH^2X\times X)$ with cohomology class the Beauville--Bogomolov--Fujiki form such that $\CH_0(X)$ admits a grading $$\CH_0(X)=\CH_0(X)_{(0)} \oplus \CH_0(X)_{(2)} \oplus \cdots \oplus \CH_0(X)_{(2n)}$$ with 
$\CH_0(X)_{(2k)} = l^{n-k}\cdot (L_*\CH_0(X))^{\cdot k}$, where $l := \delta^* L \in \CH^2(X)$.
\end{rmk}

\section{The co-radical filtration on zero-cycles}\label{S:corad}
In this section, we define explicitly the co-radical filtration on the Chow group of zero-cycles on a smooth projective variety $X$ equipped with a unit $o\in\CH_0(X)$, and we prove Proposition~\ref{P:smash-hyperK} (in cases \eqref{hilb}, \eqref{moduli}, \eqref{kum}, \eqref{fano} and \eqref{llsvs}) and Proposition~\ref{P:co-rad} of the introduction. In addition, we discuss why, in the case of hyper-K\"ahler varieties, the co-radical filtration is expected to be opposite to the conjectural Bloch--Beilinson filtration.

\subsection{The co-radical filtration}
By considering the $\Q$-linear $\otimes$-category of birational motives (or of covariant effective motives), one defines as in \S \ref{SS:co-rad} the co-radical filtration on $\CH_0(X)=\Hom(\mathds 1, \ho(X))$ for all smooth projective varieties $X$ equipped with a unit $o:\mathds 1 \to \ho(X)$. In concrete terms, we have

\begin{defn}[Co-radical filtration on $\CH_0$] \label{D:co-rad}
	Let $X$ be a smooth projective variety over a field~$K$. 
	Fix  a zero-cycle $o\in \CH_0(X)$ of degree 1 such that $\delta_*o=(o,o)$, \emph{i.e.}, a unit $o:\mathds 1 \to \ho(X)$ of the co-algebra object $\ho(X)$ in the category of birational motives. We define the ascending \emph{co-radical filtration} (associated to the unit $o$)
	 $R_\bullet$ on the $\Q$-vector space $\CH_0(X)$ by 
	$$R_k\CH_0(X) :=  \ker \big(\bar \delta^{k}_* : \CH_0(X) \to \CH_0(X^{k+1}) \big).$$
	Here, $\bar \delta^k$ is the iterated reduced co-multiplication defined in \S \ref{SS:co-mult}\,: $\bar \delta^0 := \mathrm{id} - o\epsilon$, $\bar \delta = (\delta - o\times \mathrm{id} - \mathrm{id}\times o)\circ \bar \delta^0$ and $\bar \delta^k = (\bar \delta \otimes \mathrm{id} \otimes \cdots \otimes \mathrm{id})\circ \bar \delta^{k-1}$\,; in our setting $\bar \delta^k$ is explicitly given by the formula~\eqref{E:it-red-comult}.
	We also define the ascending filtration on $K$-points of $X$\,:
	$$R_k(X) := \{x\in X(K) \ \big\vert\ [x] \in R_k\CH_0(X)\}.$$
\end{defn}

Note that the co-radical filtration $R_\bullet$ depends on the choice of unit $o \in \CH_0(X)$ since, \emph{e.g.},
$R_0\CH_0(X) = \Q o$, and that a priori we only have $\langle [x] \ \big\vert \ x\in R_k(X)\rangle \subseteq R_k\CH_0(X)$.
Proposition~\ref{P:RT} below gives an explicit description of $R_k(X)$. It also shows
 that, in case $K$ is algebraically closed, due to the fact that any degree-0 zero-cycle is smash-nilpotent~\cite{Voevodsky, Voisin-smash}, the filtration $R_\bullet(X)$ is exhaustive, \emph{i.e.}, we have $X(K) =  \bigcup_{k\geq 0} R_k(X)$ (and so $\CH_0(X) = \bigcup_{k\geq 0} R_k\CH_0(X)$). 
That $R_\bullet\CH_0(X)$ is exhaustive can also be seen from the vanishing of $\bar \delta^n$ for $n$ large\,; see  \S\ref{SS:mod}.

\begin{prop}\label{P:RT}
	Let $X$ be a smooth projective variety over a field~$K$ and let  $o\in \CH_0(X)$ be a unit. Let  $R_\bullet$ be the co-radical filtration associated to $o$. Then for all $k$ we have 
	$$R_k (X) = \{x\in X(K) \ \big\vert \ ([x]-o)^{\times k+1}=0\ \mbox{in}\ \CH_0(X^{k+1})  \}.$$
\end{prop}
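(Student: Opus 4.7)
The plan is to reduce Proposition~\ref{P:RT} to the identity
$$\bar\delta^k_*[x] = ([x]-o)^{\times (k+1)} \quad \text{in } \CH_0(X^{k+1})$$
for every $K$-point $x \in X$. This is the content of~\eqref{E:co-rad-smash} (applied with $n=k+1$), which was stated but not justified. Once this identity is established, the statement is immediate from Definition~\ref{D:co-rad}: by definition $x \in R_k(X)$ means $[x] \in \ker(\bar\delta^k_*)$, which by the identity is equivalent to $([x]-o)^{\times(k+1)} = 0$.

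To prove the identity, I would unwind the convolution action using the explicit description in~\eqref{E:it-red-comult}. Denote the projections $p_0 : X \times X^{k+1} \to X$ (onto the first factor) and $q : X \times X^{k+1} \to X^{k+1}$ (onto the last $k+1$ factors), so that $\bar\delta^k_*[x] = q_*(\bar\delta^k \cdot p_0^*[x])$. Since $x$ is a $K$-point, the inclusion $\iota_x : X^{k+1} \hookrightarrow X \times X^{k+1}$, $(x_1,\ldots,x_{k+1}) \mapsto (x, x_1, \ldots, x_{k+1})$, is a regular closed embedding with $p_0^*[x] = (\iota_x)_*[X^{k+1}]$, and the projection formula gives
$$\bar\delta^k \cdot p_0^*[x] = (\iota_x)_*\bigl(\iota_x^* \bar\delta^k\bigr).$$

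The main computation is to evaluate $\iota_x^* \bar\delta^k$. For each $i \in \{1,\ldots,k+1\}$, the composition $p_{0,i} \circ \iota_x : X^{k+1} \to X \times X$ factors as $p_i$ (the $i$-th projection on $X^{k+1}$) followed by $(x,\mathrm{id}) : X \to X \times X$. By functoriality of pullback, and since $(x,\mathrm{id})^*\Delta_X = [x]$ and $(x,\mathrm{id})^*(X \times o) = o$ in $\CH_0(X)$, we obtain
$$\iota_x^* p_{0,i}^*(\Delta_X - X \times o) = p_i^*([x] - o).$$
Multiplying these identities over $i = 1, \ldots, k+1$ yields $\iota_x^* \bar\delta^k = ([x]-o)^{\times(k+1)}$. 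Finally $q \circ \iota_x = \mathrm{id}_{X^{k+1}}$, so
$$\bar\delta^k_*[x] = q_*(\iota_x)_*([x]-o)^{\times(k+1)} = ([x]-o)^{\times(k+1)},$$
as required.

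The only point requiring care is the functoriality step, where one must justify that $\iota_x^* p_{0,i}^*$ agrees with $((x,\mathrm{id}) \circ p_i)^*$ on the cycle classes $\Delta_X$ and $X \times o$. This is not a genuine obstacle: both $p_i$ and $(x,\mathrm{id})$ are composable l.c.i.\ morphisms between smooth varieties, so refined Gysin pullbacks compose by \cite[\S 6.5]{fulton}; alternatively one can first choose cycle representatives of $\Delta_X - X\times o$ meeting $\{x\} \times X^{k+1}$ properly and check the transverse intersection directly. Everything else is a straightforward bookkeeping exercise with the projection formula.
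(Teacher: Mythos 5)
Your proof is correct, but it takes a genuinely different route from the paper's. You compute the action of the explicit cycle $\bar\delta^k$ of~\eqref{E:it-red-comult} on $[x]$ by restricting the correspondence to the fiber $\{x\}\times X^{k+1}$ via the projection formula and evaluating $\iota_x^*p_{0,i}^*(\Delta_X - X\times o) = p_i^*([x]-o)$ factor by factor; this is a clean intersection-theoretic verification of~\eqref{E:co-rad-smash}, and notably it never uses that $o$ is a unit, only that it has degree $1$ (and that $x$ is a $K$-point, so that $p_0^*[x]$ is the class of $\{x\}\times X^{k+1}$). The paper instead works entirely with the recursive, co-algebraic definition of $\bar\delta^k$ from Definition~\ref{D:co-rad}: it computes $\bar\delta([x]-o) = ([x]-o)\times([x]-o)$ formally from the identities $\delta_*[x]=[x]\times[x]$ and $\delta_*o = o\times o$ (i.e., from the unit axioms for both $[x]$ and $o$), and then concludes by induction on $k$, with no intersection theory at all. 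The two arguments prove the same identity for the two a priori different descriptions of $\bar\delta^k$ that Definition~\ref{D:co-rad} declares to agree; your version buys a direct justification of the explicit formula~\eqref{E:it-red-comult} acting as claimed (and, as a bonus, shows the equality of the two descriptions when evaluated on points), while the paper's version is shorter and stays inside the formal co-algebra framework of Section~\ref{S:co-alg}, which is the framework the rest of the paper runs on. Your flagged concern about composing Gysin pullbacks is indeed the only delicate point, and your two proposed resolutions (composition of l.c.i.\ pullbacks, or choosing representatives meeting $\{x\}\times X^{k+1}$ properly) are both standard and adequate.
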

\begin{proof} The case $k=0$ is clear. It is then enough to show that for the reduced co-multiplication $\bar \delta$ associated to the unit $o: \mathds 1 \to \ho(X), 1\mapsto o$ (as in Definition~\ref{D:co-rad}), we have for all units  $x\in \CH_0(X)$ and all integers $k> 0$, 
	\begin{equation}\label{eq:redcomult}
	\bar{\delta}^{  k}(x-o) = (x-o)^{\times k+1}.
	\end{equation}
	 First we have  $\bar{\delta} (x-o) = (x,x) - (o,o) - (x-o,o) - (o,x-o) = (x-o,x-o).$ By induction, we get 
	\begin{align*}
	\bar{\delta}^{  k+1} (x-o) & = (\bar{\delta}\otimes \mathrm{id}^{\otimes k}) \bar{\delta}^{  k}(x-o)= (\bar{\delta}\otimes \mathrm{id}^{\otimes k})\big((x-o)^{\times k+1}\big)
	 = (\bar{\delta}(x-o), (x-o)^{\times k})\\
	 &\quad = ((x-o)^{\times 2},(x-o)^{\times k})= (x-o)^{\times k+2},
	\end{align*}
	thereby establishing \eqref{eq:redcomult}.
\end{proof}

\subsection{Proof of Proposition~\ref{P:co-rad}} \label{SS:proofTco-rad}
This is an application of Proposition~\ref{P:crucial}\,: one considers the $\Q$-linear $\otimes$-category of birational motives (see \S \ref{SS:biratmot}), the functor $\operatorname{C}=\Hom(\mathds 1,-)$ and the object $M=\ho(X)$.
\qed

\subsection{Proof of Proposition~\ref{P:smash-hyperK} in cases \eqref{hilb}, \eqref{moduli}, \eqref{kum}, \eqref{fano} and \eqref{llsvs}}\label{SS:proof1}
In those cases, the hyper-K\"ahler variety $X$ admits a co-multiplicative birational Chow--K\"unneth decomposition, \emph{i.e.},  a unital grading $\ho(X) =\ho_0(X) \oplus \cdots \oplus \ho_n(X)$. We can then conclude directly from Proposition~\ref{P:co-rad} that $R_n\CH_0(X) = \CH_0(X)$ and hence that $X$ satisfies the conclusion of Proposition~\ref{P:smash-hyperK}.

In case \eqref{llsvs}, we need the following basic fact.
	 Suppose $X$ is a smooth projective variety and assume there is a dominant rational map $\pi:Y_1\times \cdots \times Y_m \dashrightarrow X$, where $Y_1,\ldots,Y_m$ are smooth projective varieties the birational motives of which admit unital gradings $\ho(Y_i) = \ho(Y_i)_{(0)} \oplus \cdots \oplus \ho(Y_i)_{(n_i)}$. Then there exists a unit $o\in \CH_0(X)$ such that for all $x\in X(K)$ we have $([x]-0)^{\times n+1}=0\ \mbox{in}\ \CH_0(X^{n+1})$, where $n = n_1+\cdots + n_m$.
Indeed, this follows at once from the fact that the natural grading on $\ho(Y_1\times \cdots \times Y_m) = \ho(Y_1)\otimes \cdots \otimes \ho(Y_m)$ is a unital grading, Proposition~\ref{P:co-rad} and Proposition~\ref{P:RT}, and pushing forward along $\pi$.

The above in particular establishes Proposition~\ref{P:smash-hyperK} in the case~\eqref{llsvs} of LLSvS eightfolds.
Indeed, if $Z$ is the LLSvS eightfold associated to a cubic fourfold $Y$, Voisin~\cite[Prop.~4.8]{Voisin-coisotropic} has constructed a dominant rational map $\psi : F\times F \dashrightarrow Z$, where $F$ is the Fano variety of lines on~$Y$. We conclude with Theorem~\ref{thm:BMCK} where a unital grading for $\ho(F)$ was constructed.\qed

\subsection{The co-radical filtration and the conjectural Bloch--Beilinson filtration for hyper-K\"ahler varieties}	\label{R:oppositeBB}
	Since for a hyper-K\"ahler variety $X$ the Hodge numbers $h^{i,0}(X)$ vanish for $i$ odd, the Bloch--Beilinson filtration, if it exists, satisfies $F^{2i-1}\CH_0(X) = F^{2i}\CH_0(X)$ for all $i$. 
	An increasing filtration $G_\bullet$ on $\CH_0(X)$ is then said to be opposite to $F^{2\bullet}$ if the composition 
	$$G_i \CH_0(X) \hookrightarrow \CH_0(X) \twoheadrightarrow \CH_0(X)/F^{2i+2} \CH_0(X)$$ is bijective.
	Now, since the conjectural Bloch--Beilinson filtration $F^\bullet$, if it exists, is induced
	by the choice of any Chow--K\"unneth decomposition (see \eqref{E:BB} and the ensuing discussion),
	we note from Proposition~\ref{P:strictgradingBCK} that 
	the existence of a unital grading  $\ho(X) = \ho(X)_{(0)}\oplus \cdots \oplus \ho(X)_{(n)}$ provides an ascending filtration 
	$$G_k \CH_0(X) := \CH_0\big(\ho(X)_{(0)}\oplus \cdots \oplus \ho(X)_{(k)}\big) =\CH_0\big(\ho(X)_{(0)}\big) \oplus \cdots \oplus \CH_0\big(\ho(X)_{(k)}\big)$$
	opposite to the conjectural Bloch--Beilinson filtration.
	If the unital grading is in addition a strict grading (and such strict gradings exist for  hyper-K\"ahler varieties of type \eqref{hilb}, \eqref{moduli}, \eqref{kum} or \eqref{fano} by Theorem~\ref{thm:cogeneration})), we get from Proposition~\ref{P:co-rad}  that the co-radical filtration $R_\bullet$ on $\CH_0(X)$ agrees with $G_\bullet$ and hence is opposite to the conjectural Bloch--Beilinson filtration $F^{2\bullet}$.
	It is thus natural to conjecture that for all hyper-K\"ahler varieties, there exists a unit such that the associated co-radical filtration is opposite to the conjectural Bloch--Beilinson filtration.

\section{The filtrations of Voisin and Shen--Yin--Zhao and the co-radical filtration}\label{SS:VoisinFil}

Let $X$ be a smooth projective variety over an algebraically closed field $k$. Given a closed point $x\in X$,  the orbit of $x$ under rational equivalence
$$O_x := \{ x' \in X(k) \ \big\vert \ [x]=[x'] \ \mbox{in}\ \CH_0(X)\}$$
is a countable union of closed algebraic subsets of $X$ and we denote $\dim O_x$ the maximal dimension of these subvarieties. 
Although the following definition makes sense for any smooth projective variety $X$, it is of particular relevance for hyper-K\"ahler varieties.

\begin{defn}[Voisin filtration \cite{Voisin-coisotropic}] 
	Let $X$ be a hyper-K\"ahler variety of dimension $2n$. 		
	The \emph{Voisin filtration}~$S_\bullet$ is the increasing filtration on $\CH_0(X)$\,:
	$$S_0\CH_0(X) \subseteq S_1\CH_0(X)\subseteq \cdots \subseteq S_n\CH_0(X) = \CH_0(X)$$ 
	defined by 
	$$S_k\CH_0(X) := \langle\ [x] \ \big\vert \ x\in S_k(X) \rangle,$$
	where 
	$$S_k(X) :=		\{x\in X \ \big\vert \ \dim O_x \geq n-k\}.$$
\end{defn}
We note that $S_{-1}(X) = \varnothing$ (and consequently that $S_{-1}\CH_0(X) = 0$)\,: indeed if $f: Z \to X$ is a codimension-0 morphism with $Z$ smooth projective such that the image of $f_*:\CH_0(Z) \to \CH_0(X)$ is one-dimensional, then by Mumford's theorem (or Bloch--Srinivas~\cite{BS}) we find that $f^*\sigma = 0$, where $\sigma$ is a nowhere degenerate symplectic form on $X$, and hence that $\dim Z \leq n$\,; see \cite[Cor.~1.2]{Voisin-coisotropic}.

In addition, by \cite[Lem.~3.10(ii)]{Voisin-coisotropic}, if $X$ contains a constant-cycle Lagrangian subvariety~$Z$ which is connected and whose class is a linear combination of $l^n$, $l^{n-2}c_{\tr},\ldots$ for some ample divisor~$l$, where $c_{\tr}$ is the transcendental part of the Beauville--Bogomolov form (see \cite[\S 1.1]{Voisin-coisotropic} for more details), then $S_0\CH_0(X)$ is spanned by the class of any point $o$ on $Z$. In general, it is expected that $S_0\CH_0(X)$ is spanned by the class of a point $o$ and when this is the case we call $o$ a \emph{Beauville--Voisin point} of $X$.

\begin{conj}\label{conj:Voisin-split-cogen}
	Let $X$ be a hyper-K\"ahler variety. Then there exists a point $o\in X$ such that for all $k\geq 0$
	\begin{equation}\label{E:containment}
 S_k\CH_0(X) = R_k\CH_0(X) = \langle\ [x] \ \big\vert \ ([x]-[o])^{\times k+1} = 0 \ \mbox{in}\ \CH_0(X^{k+1}) \ \rangle .
	\end{equation}
\end{conj}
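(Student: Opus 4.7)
The plan is to follow the strategy already articulated in the introduction and carried out in cases (i)--(iv): namely, to establish the existence of a \emph{strict unital grading} on the birational motive $\ho(X)$ and then match the induced ascending filtration $G_\bullet \CH_0(X)$ with the Voisin filtration $S_\bullet \CH_0(X)$. Once both are in hand, the identification with the co-radical filtration is immediate from Proposition~\ref{P:co-rad}, and the description of $R_k\CH_0(X)$ as generated by the classes $[x]$ with $([x]-[o])^{\times k+1}=0$ follows from Proposition~\ref{P:RT} together with the fact (a consequence of strictness) that $R_k\CH_0(X)$ is spanned by classes of points.

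First I would construct a co-multiplicative birational Chow--K\"unneth decomposition $\ho(X) = \ho_0(X) \oplus \ho_2(X) \oplus \cdots \oplus \ho_{2n}(X)$ with $\ho_0(X)$ generated by a Beauville--Voisin unit $o$ (Conjecture~\ref{conj:BMCK}), and then show the grading is strict by exhibiting, for each $k \geq 2$, a correspondence $\tfrac{1}{k!}\mu^k$ splitting $\bar\delta^{k-1} : \ho_{2k}(X) \to \operatorname{Sym}^k \ho_2(X)$ (Conjecture~\ref{C:cogeneration}). Proposition~\ref{P:co-rad} then gives $G_k \CH_0(X) = R_k\CH_0(X)$ for all $k$.

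The remaining task is to identify $G_\bullet$ with $S_\bullet$. For $G_k \subseteq S_k$, I would argue that $\CH_0(\ho_{2k}(X))$ is supported, at the level of representatives, on closed subvarieties of dimension $\geq n-k$ with constant-cycle behavior relative to the Beauville--Voisin point: since $\ho_{2k}(X)$ transcendentally detects only classes of type $(2k,0)$, a Bloch--Srinivas argument (as in the discussion following Proposition~\ref{P:smash-hyperK}) applied to $\sigma^k$ forces any cycle supported in dimension $< n-k$ to be annihilated by $\varpi_{2k}^X$, and one lifts this constraint to give a representative supported on a constant-cycle subvariety of the appropriate dimension. For the converse inclusion $S_k \subseteq G_k$, given $x \in S_k(X)$ with $\dim O_x \geq n-k$, the class $[x]-[o]$ is representable by a $0$-cycle supported on a subvariety $Z \subseteq X$ of dimension $\geq n-k$ all of whose points are mutually rationally equivalent; applying $\varpi_{2l}^X$ for $l > k$ and using once more Bloch--Srinivas with the symplectic form $\sigma^l$, the component $[x]_{(2l)}$ must vanish, so $[x] \in G_k\CH_0(X)$.

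The hard part will be the very first step: producing a strict grading on $\ho(X)$ for a \emph{general} projective hyper-K\"ahler variety. Conjecture~\ref{conj:BMCK} alone is open in full generality, currently accessible only for the four deformation families in Theorem~\ref{thm:BMCK} (and LLSvS eightfolds for the weaker unital version); upgrading to strictness (Conjecture~\ref{C:cogeneration}) additionally demands explicit splitting correspondences $\mu^k$, which in case \eqref{fano} already requires the delicate identities $[S_o]^2 = 5[o]$ and the triangle formula on the Fano variety of lines. Even granting Conjecture~\ref{conj:BMCK}, strictness is not automatic: the core geometric obstruction is to construct, for an arbitrary hyper-K\"ahler variety, a \emph{generically defined} correspondence that inverts the cup-product map $\operatorname{Sym}^k \HH^2_{\tr}(X) \twoheadrightarrow \HH^{2k}_{\tr}(X)$ and lifts this inverse to rational equivalence. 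Without a new geometric input --- comparable to Voisin's rational self-map in case \eqref{fano} or Markman's Hodge-isometric identification in case \eqref{moduli} --- this final step appears to lie genuinely beyond current techniques.
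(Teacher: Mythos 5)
You have correctly recognized that the statement is a conjecture and that the paper itself only establishes it for the families \eqref{hilb}--\eqref{fano}; your outline of the route --- construct a co-multiplicative birational Chow--K\"unneth decomposition, upgrade it to a strict grading, apply Proposition~\ref{P:co-rad} to get $G_\bullet=R_\bullet$, and then identify $G_\bullet$ with $S_\bullet$ --- is exactly the paper's strategy (Theorems~\ref{thm:BMCK}, \ref{thm:cogeneration}, \ref{thm:splitting}, \ref{thm:splitting-moduli}). Your diagnosis of the first obstruction (producing a strict grading for a general hyper-K\"ahler variety) is also accurate.

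There is, however, a genuine gap in your treatment of the step $G_\bullet=S_\bullet$, which you present as a soft Bloch--Srinivas argument but which is in fact the second place where hard, case-specific geometry enters. For $S_k\subseteq G_k$: given $x$ on a constant-cycle subvariety $Z$ of dimension $\geq n-k$, the Bloch--Srinivas decomposition applied to the correspondence $\varpi_{2l}^X\circ{}^t\Gamma_{\iota_Z}$ (which is constant on points of $Z$) only expresses $\varpi_{2l}^X{}_*[x]$ as a fixed cycle $\beta$ plus a contribution from a divisor in $Z$; it does not force $\beta=0$, and pulling back $\sigma^l$ to $Z$ gives no information beyond the isotropy of $Z$ (which only bounds $\dim Z\leq n$). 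Unconditionally this inclusion is exactly what \cite[Thm.~2.5]{Voisin-coisotropic} (case~\eqref{hilb}), Lin's results (case~\eqref{kum}), and \cite[Prop.~4.5]{Voisin-coisotropic} together with \cite[Thm.~21.9]{SV} (case~\eqref{fano}) supply, and the paper simply cites them in the proof of Theorem~\ref{thm:splitting}. For $G_k\subseteq S_k$ the situation is worse: your phrase ``one lifts this constraint to give a representative supported on a constant-cycle subvariety of the appropriate dimension'' is precisely the assertion that $X$ carries enough constant-cycle (coisotropic) subvarieties of dimension $\geq n-k$ through points with prescribed Chow-theoretic behavior, which is itself an open conjecture of Voisin and is not a consequence of the strict grading. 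So even granting Conjectures~\ref{conj:BMCK} and~\ref{C:cogeneration}, your sketch does not close the argument; the comparison with $S_\bullet$ requires independent geometric input in each case, and in general remains as open as the existence of the strict grading itself.
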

We insist here that the right-hand side equality of~\eqref{E:containment} is a consequence of the left-hand side equality. 
Indeed, since by definition $S_k\CH_0(X)$ is spanned by classes of points, it follows that $R_k\CH_0(X)$ is also spanned by classes of points, hence the right-hand side equality in~\eqref{E:containment} by Proposition~\ref{P:RT}. We may thus ask whether the property that $R_k\CH_0(X)$ is spanned by classes of points is specific to hyper-K\"ahler varieties\,? (See Remark~\ref{rmk:co-radical-abelian} for the case of abelian varieties.)
A related question, in the hyper-K\"ahler setting, is whether a point $x\in X$ whose class belongs to $S_k\CH_0(X)$ satisfies $x\in S_k(X)$, \emph{i.e.}, $\dim O_x \geq n-k$\,? 
(Compare with the corresponding question concerned with the Shen--Yin--Zhao filtration raised in \cite[Ques.~3.2]{SYZ}\,; see \S \ref{SSS:moduli}.)
\medskip

The following theorem gives evidence for the above Conjecture~\ref{conj:Voisin-split-cogen}.

\begin{thm}\label{thm:splitting}
	Let $X$ be a smooth projective variety birational to one of the hyper-K\"ahler varieties \eqref{hilb}, \eqref{kum} or \eqref{fano}.
Then Conjecture~\ref{conj:Voisin-split-cogen} holds for $X$\,; in particular, denoting $o$ the class of a point spanning $S_0\CH_0(X)$, 
 for all $x\in X$ we have
$$[x]\in S_k\CH_0(X) \ \iff \ ([x]-o)^{\times k+1} = 0 \ \mbox{in}\ \CH_0(X^{k+1}).$$
\end{thm}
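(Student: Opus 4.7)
The plan is to combine Theorem~\ref{thm:cogeneration}, which produces a strict grading
$\ho(X) = \ho_0(X) \oplus \ho_2(X) \oplus \cdots \oplus \ho_{2n}(X)$
on the birational motive of $X$ in each of the cases~\eqref{hilb}, \eqref{kum} and~\eqref{fano}, with Proposition~\ref{P:co-rad} applied to the unit $o\in\CH_0(X)$ defining the summand $\ho_0(X)$. This immediately yields
$$R_k\CH_0(X) = G_k\CH_0(X) := \bigoplus_{i\leq k}\CH_0(\ho_{2i}(X)),$$
so that by Proposition~\ref{P:RT} the theorem reduces to the identification $S_k\CH_0(X) = G_k\CH_0(X)$. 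First I would observe that both filtrations are birational invariants: $G_\bullet$ is by the stable birational invariance of having a co-multiplicative birational Chow--K\"unneth decomposition (\S\ref{SS:BCK}), and $S_\bullet$ is by its definition via orbits of rational equivalence and the birational invariance of $\CH_0$ of smooth projective varieties. This reduces the proof to the three standard models $\operatorname{Hilb}^n(S)$, $K_n(A)$ and $F(Y)$, where the unit $o$ coincides with the Beauville--Voisin class.

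For the inclusion $G_k\CH_0(X) \subseteq S_k\CH_0(X)$, I would argue directly using the explicit descriptions of generators of $G_k\CH_0(X)$ given in Remark~\ref{R:hilb} for~\eqref{hilb} and Remark~\ref{R:fano} for~\eqref{fano}, together with the analogous description for $K_n(A)$ obtained from the proof of Theorem~\ref{thm:BMCK}\eqref{kum}. In each case the generators are represented by zero-cycles supported on positive-dimensional subvarieties all of whose points are rationally equivalent in $X$: for $\operatorname{Hilb}^n(S)$ the generator $[x_1,\ldots,x_k,o_S,\ldots,o_S]$ of $G_k$ deforms in an $(n-k)$-dimensional constant-cycle family by sliding the $n-k$ copies of $o_S$ along a rational curve in $S$ through $o_S$; for $F(Y)$ the generators $[l_1]+\cdots+[l_k]-k[o]$ built from triangles and lines of second type lie on constant-cycle subvarieties of the required dimension by~\cite{SV}; and for $K_n(A)$ the corresponding cycles move in families built from translates by torsion points on $A$.

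The main obstacle is the reverse inclusion $S_k\CH_0(X)\subseteq G_k\CH_0(X)$, for which I would invoke the existing fine comparisons between the Voisin filtration and the Chow--K\"unneth-type decompositions of these specific varieties. For~\eqref{hilb} this is Voisin's theorem~\cite{Voisin-coisotropic} describing $S_k\CH_0(\operatorname{Hilb}^n(S))$ as the span of classes $[x_1,\ldots,x_k,o_S,\ldots,o_S]$, which is exactly the description of $G_k$ from Remark~\ref{R:hilb}. For~\eqref{kum} it follows from the identification of $S_\bullet$ with the Beauville--Lin eigenspace decomposition established in~\cite{lin}, which coincides with $G_\bullet$ by the explicit construction in the proof of Theorem~\ref{thm:BMCK}\eqref{kum} via the Deninger--Murre projectors of $A^n$. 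For~\eqref{fano} the input is \cite[Thm.~21.9]{SV}: $S_\bullet$ is the filtration associated to the eigenspace decomposition of Voisin's rational self-map $\varphi_*$ on $\CH_0(F(Y))$, which is precisely the decomposition defining $G_\bullet$ by the proof of Theorem~\ref{thm:BMCK}\eqref{fano}. The technical heart lies in case~\eqref{fano}, where upgrading the inclusion $S_k\subseteq G_k$ to an equality requires combining the Shen--Vial eigenspace description with the injectivity statement for $\bar\delta^{k-1}$ on $\ho_{2k}(F)$ from Theorem~\ref{thm:cogeneration}, as sketched in the introduction for the case $k=1$.
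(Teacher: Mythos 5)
Your proposal follows essentially the same route as the paper: reduce via Proposition~\ref{P:co-rad} and Theorem~\ref{thm:cogeneration} to the identification $S_k\CH_0(X)=G_k\CH_0(X)$ for the filtration induced by the co-multiplicative birational Chow--K\"unneth decomposition, and then invoke the known comparisons (Voisin's description of $S_\bullet$ on $\mathrm{Hilb}^n(S)$, Lin's work for $K_n(A)$, and \cite[Prop.~4.5]{Voisin-coisotropic} together with \cite[Thm.~21.9]{SV} for $F(Y)$). One small clarification: the injectivity of $\bar\delta^{k-1}$ on $\ho_{2k}(F)$ is what yields $G_k=R_k$ via Proposition~\ref{P:co-rad}, not the equality $S_k=G_k$, which in case~\eqref{fano} is supplied entirely by the cited eigenspace comparison.
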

\begin{proof} 
By Proposition~\ref{P:co-rad}, together with Theorem~\ref{thm:cogeneration}, in order to prove the theorem, it suffices to show in each case that $$S_k \operatorname{CH}_0(X) = \Big(\bigoplus_{i\leq k} \varpi^X_{2i} \Big)_* \CH_0(X) \quad \subseteq \CH_0(X),$$
	where $\{\varpi_i^X\}$ is the co-multiplicative birational Chow--K\"unneth decomposition from Theorem~\ref{thm:BMCK}.
	
	In case \eqref{hilb}, this is \cite[Thm.~2.5]{Voisin-coisotropic} together with the explicit  description of the co-multiplicative birational Chow--K\"unneth decomposition in the proof of Theorem~\ref{thm:BMCK}\eqref{hilb}\,; in that case we simply have, after identifying $\CH_0(\operatorname{Hilb}^n(S))$ with $\CH_0(S^{(n)})$, that $S_k\CH_0(\operatorname{Hilb}^n(S))$ is spanned by the classes of points $x_1+\cdots+x_k+(n-k)o$.

	The case \eqref{kum} is due to Lin~\cite{lin}. (Note that in this case our $( \varpi^X_{2i})_* \CH_0(X)$ coincides with Lin's $\CH_0(X)_{2i}$.)
	
	The case \eqref{fano} is \cite[Prop.~4.5]{Voisin-coisotropic} (together with the fact \cite[Thm.~21.9]{SV} that in this case our decomposition $\CH_0(F)$ into eigenspaces for the action of $\varphi_*$ (Theorem~\ref{thm:BMCK}) coincides with the Shen--Vial decomposition).
\end{proof}

In case~\eqref{moduli}, one can formulate a version of Theorem~\ref{thm:splitting}  by replacing Voisin's filtration $S_\bullet$ with the filtration~\eqref{E:SYZ-def} introduced by Shen--Yin--Zhao~\cite{SYZ}\,:

\begin{thm}\label{thm:splitting-moduli}
	Let $X$ be a smooth projective variety birational to a moduli space $\operatorname{M}_\sigma(v)$ of stable objects on a K3 surface.
	Then, denoting $o$ the degree-1 zero-cycle spanning $S^{\mathrm{SYZ}}_0\CH_0(\operatorname{M}_\sigma(v))$, we have 
	$$S^{\mathrm{SYZ}}_k\CH_0(\operatorname{M}_\sigma(v)) = R_k\CH_0(\operatorname{M}_\sigma(v)).$$
	In particular,
	for all $x\in X$ we have
	$$[x]\in S^{\mathrm{SYZ}}_k\CH_0(\operatorname{M}_\sigma(v)) \ \iff \ ([x]-o)^{\times k+1} = 0 \ \mbox{in}\ \CH_0(\operatorname{M}_\sigma(v)^{k+1}).$$
\end{thm}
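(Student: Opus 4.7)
The plan is to combine Proposition~\ref{P:co-rad} with Theorem~\ref{thm:cogeneration} in case \eqref{moduli} and with Theorem~\ref{thm:moduli}, in direct analogy with the proof of Theorem~\ref{thm:splitting}. Since both $S^{\mathrm{SYZ}}_\bullet$ and $R_\bullet$ depend only on the birational class of $\operatorname{M}_\sigma(v)$ (the former is defined intrinsically on the hyper-K\"ahler model and the latter depends only on the birational motive), it suffices to prove the filtration equality $S^{\mathrm{SYZ}}_k\CH_0(\operatorname{M}_\sigma(v)) = R_k\CH_0(\operatorname{M}_\sigma(v))$. Once this is established, the pointwise equivalence at the end of the theorem follows from Proposition~\ref{P:RT}, since $S^{\mathrm{SYZ}}_\bullet$ is by construction generated by classes of closed points.

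First I would apply Theorem~\ref{thm:cogeneration} in case \eqref{moduli}: it supplies a strict grading on the co-algebra $\ho(\operatorname{M}_\sigma(v))$ arising from the co-multiplicative birational Chow--K\"unneth decomposition $\{\varpi^{\operatorname{M}_\sigma(v)}_{2i}\}$ produced by Theorem~\ref{thm:BMCK}. Proposition~\ref{P:co-rad} then identifies the induced filtration $G_k\CH_0(\operatorname{M}_\sigma(v)) := \bigoplus_{i\le k} (\varpi^{\operatorname{M}_\sigma(v)}_{2i})_*\CH_0(\operatorname{M}_\sigma(v))$ with the co-radical filtration $R_k\CH_0(\operatorname{M}_\sigma(v))$ associated to the unit $o\in\CH_0(\operatorname{M}_\sigma(v))$ cut out by $\varpi^{\operatorname{M}_\sigma(v)}_0$ (this is the Beauville--Voisin class of~\cite{MZ}). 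It thus remains to identify $G_\bullet$ with $S^{\mathrm{SYZ}}_\bullet$.

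For this identification I would exploit that, in the proof of Theorem~\ref{thm:BMCK} in case \eqref{moduli}, the grading on $\ho(\operatorname{M}_\sigma(v))$ was defined by transport of structure along the co-algebra isomorphism $\gamma':\ho(\mathrm{Hilb}^n(S))\stackrel{\sim}{\longrightarrow}\ho(\operatorname{M}_\sigma(v))$ built by Theorem~\ref{thm:moduli} from the incidence correspondence $R\subset\operatorname{M}_\sigma(v)\times\mathrm{Hilb}^n(S)$ via Proposition~\ref{prop:coalg}. By construction, $\gamma'$ is a graded co-algebra isomorphism and hence restricts to an isomorphism $G_k\CH_0(\mathrm{Hilb}^n(S))\stackrel{\sim}{\longrightarrow} G_k\CH_0(\operatorname{M}_\sigma(v))$. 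By Remark~\ref{R:hilb}, the source is spanned by zero-cycles of the form $[x_1,\ldots,x_k,o_S,\ldots,o_S]$ with $x_i\in S$. For the inclusion $G_k\subseteq S^{\mathrm{SYZ}}_k$, observe that the point $\xi = [x_1,\ldots,x_k,o_S,\ldots,o_S]$ satisfies $[\operatorname{Supp}(\xi)] = \sum_i[x_i]+(n-k)[o_S]$, so any $\mathcal{E}\in\operatorname{M}_\sigma(v)$ incident to $\xi$ has $c_2(\mathcal{E}) = \sum_i[x_i]+(n-k+c)[o_S]\in S^{\mathrm{OG}}_k(S)$, placing $\gamma'[\xi]$ in $S^{\mathrm{SYZ}}_k\CH_0(\operatorname{M}_\sigma(v))$. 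For the reverse inclusion, given $[\mathcal{E}]\in S^{\mathrm{SYZ}}_k$ one writes $c_2(\mathcal{E}) = \sum_{i=1}^k[x_i]+m[o_S]$ and picks $\xi_0 := [x_1,\ldots,x_k,o_S,\ldots,o_S]$; by the Marian--Zhao constancy result~\cite{MZ}, $\gamma[\mathcal{E}]$ is a non-zero scalar multiple of $[\xi_0]$ (since all $\xi$ with $c_2(\mathcal{E}) = [\operatorname{Supp}(\xi)]+c[o_S]$ have the same class in $\CH_0(\mathrm{Hilb}^n(S))$), and applying $\gamma'\circ\gamma = \mathrm{id}$ expresses $[\mathcal{E}]$ as a scalar multiple of $\gamma'[\xi_0]\in G_k\CH_0(\operatorname{M}_\sigma(v))$.

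The principal thing to pin down is that the co-algebra isomorphism $\gamma'$ of Theorem~\ref{thm:moduli} really is graded once both sides are endowed with the strict gradings of Theorems~\ref{thm:BMCK} and \ref{thm:cogeneration}. This is essentially tautological, since the grading on $\ho(\operatorname{M}_\sigma(v))$ was transported from $\mathrm{Hilb}^n(S)$ through $\gamma'$ in the first place, but the argument demands careful bookkeeping of the normalization constants $\deg\phi$ and $\deg\psi$ appearing in Proposition~\ref{prop:coalg} and of the constant $c$ intrinsic to the incidence~$R$. With this graded compatibility in hand, the two-sided comparison of $G_\bullet$ and $S^{\mathrm{SYZ}}_\bullet$ reduces, as above, to the explicit description of $G_k\CH_0(\mathrm{Hilb}^n(S))$ given in Remark~\ref{R:hilb} together with Marian--Zhao.
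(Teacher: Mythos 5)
Your proposal is correct and follows essentially the same route as the paper: transport the strict grading of Theorem~\ref{thm:cogeneration} from $\operatorname{Hilb}^n(S)$ to $\operatorname{M}_\sigma(v)$ along the incidence correspondence of Theorem~\ref{thm:moduli}, invoke Proposition~\ref{P:co-rad} to get $G_\bullet=R_\bullet$, and then match $G_\bullet$ with $S^{\mathrm{SYZ}}_\bullet$ using the fact that the correspondence $R_0$ interchanges the two filtrations. The only (harmless) difference is that you unpack the paper's one-line appeal to ``the covariant and contravariant action of $R_0$ preserves $S^{\mathrm{SYZ}}_\bullet$'' into an explicit two-sided check via Remark~\ref{R:hilb} and the Marian--Zhao constancy, where the paper instead cites the coincidence $S_\bullet=S^{\mathrm{SYZ}}_\bullet$ on $\CH_0(\operatorname{Hilb}^n(S))$ from \cite{SYZ}.
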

\begin{proof}
	We note that the isomorphism $(R_0)^* : \ho(\operatorname{Hilb}^n(S)) \stackrel{\sim}{\longrightarrow} \ho(\operatorname{M}_\sigma(v)) $ of (the proof of) Theorem~\ref{thm:moduli} admits a multiple of $(R_0)_*$ as its inverse and that the co-multiplicative birational Chow--K\"unneth decomposition for $\ho(\operatorname{M}_\sigma(v))$ is transported from that of $ \ho(\operatorname{Hilb}^n(S))$ via $R_0^*$ and $(R_0)_*$.
	On the other hand, the filtrations $S_\bullet$ and $S^{\mathrm{SYZ}}_\bullet$ coincide on $\CH_0(\operatorname{Hilb}^n(S))$\,; see~\cite{SYZ}.
We may then conclude from the fact that the covariant and contravariant action of $R_0$ preserves the filtration $S^{\mathrm{SYZ}}_\bullet$.
\end{proof}

\begin{rmk}[Theorem~\ref{T:Voisin-filt} in case \eqref{moduli}]\label{R:moduli}
 Combined with the recent result of Li--Zhang~\cite[Thm.~1.1]{Li-Zhang} proving $S^{\mathrm{SYZ}}_\bullet\CH_0(\operatorname{M}_\sigma(v)) = S_\bullet \CH_0(\operatorname{M}_\sigma(v))$, Theorem~\ref{thm:splitting-moduli} establishes Theorem~\ref{T:Voisin-filt} in case \eqref{moduli}. 
\end{rmk}

\appendix

\section{The co-radical filtration on positive-dimensional cycles}\label{S:app}

The aim of this appendix, the results of which are not used in the main body of the paper, is to consider the co-radical filtration for (not necessarily zero-dimensional) cycles on a smooth projective variety equipped with a unit $o\in \CH_0(X)$. In \S\ref{SS:MCK} and \S\ref{SS:mod}, we parallel \S \ref{S:MBCK} and \S\ref{S:strict}, and explore the relations between on the one hand the co-radical filtration and on the other hand the existence of a multiplicative Chow--K\"unneth decomposition and so-called modified diagonals. In Proposition~\ref{P:BFMS-R}, we then observe that our co-radical filtration agrees with a filtration independently considered by Barros--Flapan--Marian--Silversmith~\cite[\S4]{BFMS}.
 Finally, in \S \ref{SS:abelian}, we show that the co-radical filtration on the Chow ring of an abelian variety is a ring filtration that is opposite to the candidate Bloch--Beilinson filtration of Beauville, thereby establishing Proposition~\ref{P:smash-abelian}.\medskip

We fix a smooth projective variety $X$ of pure dimension $d$ over a field $K$, equipped with a unit $o\in \CH_0(X)$.

\begin{defn} \label{D:corad-positive}
	For all $i\geq 0$, we  define on $\CH_i(X)$ 
	the increasing \emph{co-radical filtration}\,:
	$$R_k\CH_i(X) := \ker \big(\bar{\delta}^{k} : \CH_i(X) \to \CH_i(X^{k+1})  \big) =: R_{k}\Hom(\mathds 1(i),\h(X)) \quad \mbox{for  }   k\geq 0,$$
	where the right-hand side term is nothing but the filtration of \S \ref{SS:co-rad} for the covariant category of Chow motives (and the functor $\operatorname{C}:= \Hom(\mathds{1}(i),-)$). The correspondence $\bar \delta^k$ is made explicit in~\eqref{R:comult} below.
	The co-radical filtration on the Chow groups of $X$ graded by codimension is then defined as\,:
	\begin{equation*}\label{E:corad-codim}
	R_k\CH^i(X) := R_k\CH_{d-i}(X).
	\end{equation*}
\end{defn}

\subsection{Relation to multiplicative Chow--K\"unneth decompositions} \label{SS:MCK}

Recall that the pullback along the diagonal embedding $\delta : X\hookrightarrow X\times_K X$, together with the pullback along the structure morphism $\epsilon : X \to \Spec K$, defines an algebra structure on the contravariant Chow motive $\h(X)$ of $X$. The following definition mimics the classical definition of an (augmented) graded algebra.

\begin{defn}[{\cite[Def.~8.1]{SV}}] \label{D:MCK}
	A \emph{multiplicative Chow--K\"unneth decomposition} on the contravariant Chow motive $\h(X)$ is a finite direct sum decomposition 
	\begin{equation}\label{E:grading}
\h(X) := \h^0(X) \oplus \cdots \oplus \h^{2d}(X)
	\end{equation} such that 
	\begin{enumerate}[(a)]
		\item it is a Chow--K\"unneth decomposition, \emph{i.e.}, $\HH^*(\h^i(X)) = H^i(X)$ for all $i\geq 0$\,;
		\item it defines an algebra grading, \emph{i.e.},	$\delta^* : \h^i(X) \otimes \h^j(X) \to \h(X)$ factors through $\h^{i+j}(X)$ for all $i,j \geq 0$.
	\end{enumerate}
\end{defn}

If $\h(X)$ admits a multiplicative Chow--K\"unneth decomposition, then by \cite[footnote~24]{FV-JAG} we have canonical identifications $\h^{i}(X)^\vee = \h^{2d-i}(X)(d-i)$.  
Hence, if we assume in addition that $\h^0(X) \simeq \mathds 1$, then, by dualizing, we obtain a unital grading on the covariant Chow motive of~$X$, seen as a co-algebra object. By taking the image in the category of birational motives, this induces further a unital grading on the birational motive of $X$. We then denote $o$ the associated graded unit and the co-radical filtration $R_\bullet\CH^i(X)$ is implicitly  associated to $o$.

Given a Chow--K\"unneth decomposition as in~\eqref{E:grading}, 
the decreasing filtration  
$$ F^k\CH^i(X):= \bigoplus_{j\geq k}\CH^i(X)_{(j)}, \quad \mbox{where}\ \CH^i(X)_{(k)} := \CH^i(\h^{2i-k}(X)) $$
is conjecturally independent of the choice of a Chow--K\"unneth decomposition and is the Bloch--Beilinson filtration~\cite[\S 5]{Jannsen}.
We then define on $\CH^i(X)$ the filtration
$$G_k\CH^i(X) :=  \bigoplus_{j\leq k}\CH^i(X)_{(j)},$$
which  is opposite to the conjectural Bloch--Beilinson filtration. Note that the filtration $G_\bullet$ is a ring filtration if the Chow--K\"unneth decomposition is multiplicative.
We then have, by dualizing, the analogue of Proposition~\ref{P:co-rad} in the contravariant setting\,:

\begin{prop}\label{P:corad-pos}
Assume the contravariant Chow motive $\h(X)$ admits a multiplicative Chow--K\"unneth decomposition as in~\eqref{E:grading} with $\h^0(X) \simeq \mathds 1$. 
Define $R'_k\CH^i(X) := R_{2(d-i)+k}\CH^i(X)$.
Then $$G_k\CH^i(X) \subseteq R'_k\CH^i(X).$$
Moreover, if $\h(X)$ is generated by $\h^1(X)$, \emph{i.e.}, if the natural graded map $\operatorname{Sym}^*\h^1(X) \to \h(X)$ is split surjective, then equality holds and in particular $R'_\bullet\CH^*(X)$ is a ring filtration.\qed
\end{prop}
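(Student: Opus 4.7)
The plan is to invoke the abstract Proposition~\ref{P:crucial} in the rigid $\otimes$-category of (covariant) Chow motives, after dualizing the hypotheses on the algebra object $\h(X)$ into statements about the dual co-algebra object.

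First I would turn the multiplicative Chow--K\"unneth decomposition $\h(X) = \bigoplus_i \h^i(X)$, with $\h^0(X)\simeq \mathds{1}$, into a unital graded co-algebra structure on the covariant motive by setting $\h_i(X) := \h^{2d-i}(X)^\vee$. The isomorphism $\h^0(X)\simeq \mathds{1}$ dualizes to $\h_{2d}(X)\simeq \mathds{1}(d)$, providing a unit $o: \mathds{1}(d) \to \h_*(X)$, i.e.\ a degree-$1$ zero-cycle $o\in \CH_0(X)$ with $\delta_*o=o\times o$. The condition that $\delta^* : \h^i(X)\otimes \h^j(X)\to \h^{i+j}(X)$ is graded dualizes to the co-multiplication $\delta_* : \h_k(X) \to \bigoplus_{i+j=k} \h_i(X)\otimes \h_j(X)$ being graded, giving a unital co-algebra grading in the sense of \S\ref{SS:grading}.

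Next I apply Proposition~\ref{P:crucial} with $\mathcal{C}$ the category of covariant Chow motives over $\Q$, the $\Q$-linear functor $\operatorname{C}:=\Hom(\mathds{1}(i),-)$, and $M:=\h_*(X)$ equipped with the unital grading just constructed. Under the identification $\Hom(\mathds{1}(i),\h_*(X))=\CH^i(X)$, the piece $\CH^i(\h^{2i-j}(X))=\CH^i(X)_{(j)}$ corresponds to morphisms $\mathds{1}(i)\to \h_{2d-2i+j}(X)$, so the ``algebra-degree'' $j\ge 0$ on $\CH^i(X)$ is shifted by $2(d-i)$ when expressed as the co-algebra grading. This is precisely the indexing built into $R'_k\CH^i(X) := R_{2(d-i)+k}\CH^i(X)$, and Proposition~\ref{P:crucial} gives $G_k\CH^i(X)\subseteq R'_k\CH^i(X)$.

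For the equality under the additional assumption that $\h(X)$ is generated by $\h^1(X)$, I would dualize the split surjection $\operatorname{Sym}^*\h^1(X)\twoheadrightarrow \h(X)$ to a split injection $\h_*(X)\hookrightarrow \operatorname{Sym}^*\h_{2d-1}(X)$, thereby exhibiting $\h_*(X)$ as co-generated by its ``primitive'' part, i.e.\ as a strictly graded co-algebra in the sense of \S\ref{SS:strictgrading}. The equality portion of Proposition~\ref{P:crucial} then upgrades the inclusion to $G_k\CH^i(X)=R'_k\CH^i(X)$ for every $i$. Finally, multiplicativity of the Chow--K\"unneth decomposition, together with the fact that intersection product $\CH^i(X)\otimes \CH^j(X)\to \CH^{i+j}(X)$ corresponds to the map $\operatorname{C}(M)\otimes \operatorname{C}(M)\to \operatorname{C}(M\otimes M)\to \operatorname{C}(M)$ induced by the algebra structure, shows that $G_\bullet\CH^*(X)$ is a ring filtration, hence so is $R'_\bullet\CH^*(X)$. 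The main point to pin down carefully is the indexing convention making $2(d-i)+k$ come out right on both sides; beyond this bookkeeping, everything reduces to Proposition~\ref{P:crucial} and the duality between generation and co-generation.
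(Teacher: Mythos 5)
Your proposal is correct and is exactly the argument the paper intends (the paper simply writes \qed, having just explained that one dualizes the multiplicative Chow--K\"unneth decomposition to a unital grading on the covariant co-algebra object and applies the abstract Proposition~\ref{P:crucial} with $\operatorname{C}=\Hom(\mathds 1(d-i),-)$, with strictness in the equality case coming from dualizing the split surjection $\operatorname{Sym}^*\h^1(X)\twoheadrightarrow \h(X)$ via Proposition~\ref{P:strictgrading}). The only blemishes are harmless bookkeeping slips you already flag --- the functor should be $\Hom(\mathds 1(d-i),-)$ rather than $\Hom(\mathds 1(i),-)$, the unit is a map $\mathds 1\to\h(X)$ covariantly, and the duality $\h^i(X)^\vee=\h^{2d-i}(X)(d-i)$ carries Tate twists you suppress --- none of which affects the shift $2(d-i)+k$, which does come out as stated.
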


In case $\h^1(X) =0$, one has\,:

\begin{prop}\label{P:corad-pos2}
	Assume the contravariant Chow motive $\h(X)$ admits a multiplicative Chow--K\"unneth decomposition as in~\eqref{E:grading} with $\h^0(X) \simeq \mathds 1$ and $\h^1(X) = 0$. Define $R''_k\CH^i(X) := R_{(d-i)+k}\CH^i(X)$.
	Then $$G_{2k+1}\CH^i(X) \subseteq R''_k\CH^i(X).$$
	Moreover, if $\h(X)$ is generated by $\h^2(X)$, \emph{i.e.}, if the natural graded map $\operatorname{Sym}^*\h^2(X) \to \h(X)$ is split surjective, then equality holds. If in addition $\h^{2i+1}(X)=0$ for all $i\geq 0$, then $R''_\bullet\CH^*(X)$ is a ring filtration.\qed
\end{prop}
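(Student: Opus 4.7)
The plan is to parallel the proof of Proposition~\ref{P:corad-pos}, ultimately reducing to Proposition~\ref{P:crucial}: dualize the multiplicative Chow--K\"unneth decomposition to a unital graded co-algebra structure on the covariant Chow motive, and then exploit the additional hypothesis $\h^1(X)=0$ to halve the effective length of the grading, which is exactly the source of the improvement from the shift $2(d-i)+k$ in Proposition~\ref{P:corad-pos} to the sharper shift $(d-i)+k$ here.

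Concretely, first I would dualize the decomposition $\h(X) = \h^0(X) \oplus \cdots \oplus \h^{2d}(X)$ (via Poincar\'e duality $\h^j(X)^\vee \simeq \h^{2d-j}(X)(d-j)$) to endow the covariant Chow motive with a unital graded co-algebra structure $N = N_{(0)} \oplus \cdots \oplus N_{(2d)}$, with $N_{(0)}\simeq \mathds 1$ (from $\h^0(X) \simeq \mathds 1$) and $N_{(1)} = 0$ (from $\h^1(X) = 0$). A standard tracking of Tate twists identifies $\CH^i(X)_{(l)} = \CH^i(\h^{2i-l}(X))$ with a summand of $\operatorname{Hom}(\mathds 1(d-i), N_{(2(d-i)+l)})$, so that $G_s\CH^i(X)$ matches the truncation of $N$ by pieces of grading between $2(d-i)$ and $2(d-i)+s$.

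The key abstract observation refining Proposition~\ref{P:crucial} is: in any unital graded co-algebra $(M, M_{(\bullet)})$ with $M_{(1)}=0$, the iterated reduced co-multiplication $\bar\delta^{\,m}|_{M_{(j)}}$ factors through $\bigoplus_{j_0+\cdots+j_m=j,\ j_l\geq 2} M_{(j_0)}\otimes\cdots\otimes M_{(j_m)}$ and therefore vanishes whenever $j \leq 2m+1$. Applying this to $N$ with $j = 2(d-i)+l$ and $m = (d-i)+k$, the bound $j \leq 2m+1$ becomes $l \leq 2k+1$, yielding directly the inclusion $G_{2k+1}\CH^i(X) \subseteq R''_k\CH^i(X)$.

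For the equality under the additional assumption that $\operatorname{Sym}^*\h^2(X) \to \h(X)$ is split surjective, dualizing shows that $N$ is co-generated by $N_{(2)}$; by the even-graded variant of Proposition~\ref{P:strictgrading}, this unital grading is strict, so the equality part of Proposition~\ref{P:crucial} upgrades the above inclusion to an equality. For the ring filtration claim under the further hypothesis $\h^{2i+1}(X)=0$ for all $i\geq 0$, all odd pieces of the grading vanish, and the multiplicativity of the Chow--K\"unneth decomposition gives $\CH^i(X)_{(k)} \cdot \CH^j(X)_{(l)} \subseteq \CH^{i+j}(X)_{(k+l)}$; hence $G_{2k+1}\CH^i(X) \cdot G_{2k'+1}\CH^j(X) = G_{2k}\CH^i(X) \cdot G_{2k'}\CH^j(X) \subseteq G_{2(k+k')}\CH^{i+j}(X) = G_{2(k+k')+1}\CH^{i+j}(X)$, completing the proof. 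The main technical point is the careful bookkeeping of Tate twists in the dualization step; once that is settled, everything reduces mechanically to the co-algebra formalism of Section~\ref{S:co-alg}.
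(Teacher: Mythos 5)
Your proposal is correct and follows exactly the route the paper intends: the paper states Proposition~\ref{P:corad-pos2} with no written proof, as the dual/contravariant analogue of Proposition~\ref{P:co-rad} obtained by dualizing the multiplicative Chow--K\"unneth decomposition and invoking the co-algebra formalism of Proposition~\ref{P:crucial}, and your index bookkeeping (the piece $\CH^i(X)_{(l)}$ sitting in covariant grading degree $2(d-i)+l$, with $\bar\delta^m$ killing $M_{(j)}$ for $j\le 2m+1$ once $M_{(1)}=0$) correctly accounts for the improved shift from $2(d-i)+k$ to $(d-i)+k$. The equality and ring-filtration arguments are likewise the intended ones.
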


\begin{rmk}[The case of hyper-K\"ahler varieties]	 \label{R:MCK-corad}
	In case $X$ admits a multiplicative Chow--K\"unneth decomposition $\h(X) = \h^0(X) \oplus \cdots \oplus \h^{2d}(X)$ with $\h^0(X)\simeq \mathds 1$ and with $\h^1(X)=0$, which conjecturally is the case for hyper-K\"ahler varieties~\cite[Conj.~4]{SV}, then 
	 Proposition~\ref{P:corad-pos2}
	shows that
	$R''_\bullet\CH^i(X)$ contains a filtration opposite to $F^{2\bullet+1}\CH^i(X)$, where $F^\bullet$ is the conjectural Bloch--Beilinson filtration,
	and equality holds if $\h(X)$ is generated as an algebra object by $\h^2(X)$\,; \emph{e.g.}, if $X=\operatorname{Hilb}^2(S)$ for a K3 surface $S$ or if $X=F(Y)$ for a smooth cubic fourfold $Y$ (see~\cite{FLV-Franchetta2}). Note however that the latter does not hold in general for hyper-K\"ahler varieties since, for instance,  $\operatorname{Sym}^*\HH^2(X) \to \HH^*(X)$ is not surjective if $X$ is deformation-equivalent to $\operatorname{Hilb}^n(S)$ for a K3 surface $S$ and an integer $n\geq 3$, or is  deformation-equivalent to $K_n(A)$ for an integer $n\geq 2$ since $\HH^3(K_n(A)) \neq 0$.
	Arguments similar to those in the proof of Proposition~\ref{P:strictgradingBCK} show that if $\HH^*(X)$ is not generated by $\HH^2(X)$, then $\HH_*(X) = \HH^*(X)^\vee$ is not co-generated by $\HH_2(X)$, \emph{i.e.}, there exists a positive integer $k$ such that $\bar \delta^{k-1}: \HH_{2k}(X) \to \operatorname{Sym}^k \HH_2(X)$ is not injective. By the general Bloch--Beilinson philosophy, there exists conjecturally an integer~$i$ such that $\bar \delta^{k-1}$ is non-zero on $\operatorname{Gr}^{2k-2i}_F \CH^i(X)$ and so $R''_\bullet \CH^i(X)$ is conjecturally \emph{not} opposite to $F^{2\bullet+1}\CH^i(X)$ -- it only contains strictly a filtration opposite to $F^{2\bullet+1}\CH^i(X)$.
\end{rmk}

\subsection{Relation to modified diagonals}\label{SS:mod}

Let $o: \mathds 1 \to \h(X)$ be a unit for the covariant Chow motive of $X$ seen as  a co-algebra object. In other words, $o$ is a zero-cycle in $\CH_0(X)$ such that $\delta_*o = o\times o \ \mbox{in}\ \CH_0(X\times X)$. The iterated reduced co-multiplication $\bar \delta^n: \h(X) \to \h(X)^{\otimes n+1}$, as defined in \S \ref{SS:co-mult} in a general setting, satisfies 
\begin{equation}\label{R:comult}
\bar \delta^n = \prod_{i=1}^{n+1} p_{0,i}^*(\Delta_X - X\times o) \quad \ \mbox{in}\ \CH_{\dim X}(X\times X^{n+1}).
\end{equation}
Here $\Delta_X\in \CH_{\dim X}(X\times X)$ is the class of the diagonal and $p_{0,i} : X\times X^{n+1} \to X\times X$ is the projector on the product of the first and $(i+1)$-st factors. One recognizes here the cycle denoted $\Gamma^{1,n+1}(X,o)$ in \cite[(5)]{VoisinGT}, the pushforward of which under the projection $X\times X^{n+1} \to X^{n+1}$ is the so-called $(n+1)$-th modified diagonal cycle $\Gamma^{n+1}(X,o)$\,; see \cite[Lem.~2.1]{VoisinGT}.
Restricting to the generic point $\eta_X$ of $X$, we obtain explicitly for the iterated reduced co-multiplication (associated to the unit~$o$) on the birational motive of $X$\,:
$$\bar \delta^n = \prod_{i=1}^{n+1} p_{0,i}^*(\Delta_X|_{\eta_X\times X} - \eta_X \times o) \quad \ \mbox{in}\ \Hom\big(\ho(X), \ho(X)^{\otimes n+1}\big).$$
From~\cite[Cor.~1.6 \& Prop.~2.2]{VoisinGT}, we have that $\bar \delta^n: \h(X) \to \h(X)^{\otimes n+1}$ vanishes for $n$ large enough.
Recall from \S \ref{SS:co-mult} that in the general setting where $M=M_{(0)}\oplus \cdots \oplus M_{(n)}$ is a unital grading on a co-algebra object $M$, then the iterated reduced co-multiplication $\bar \delta^{n}$ vanishes. 
Therefore, from the discussion in \S \ref{SS:MCK}, we see that if $\h(X)$ admits a multiplicative Chow--K\"unneth decomposition with $\h^0(X)\simeq \mathds 1$, then $\bar \delta^{2\dim X} = 0$ (and hence by pushforward $\Gamma^{2\dim X+1}(X,o)=0$) and $\bar \delta^{\dim X+1} = 0$ (and hence by pushforward $\Gamma^{\dim X+1}(X,o)=0$) in case $\h^1(X)=0$. 
This recovers and gives a more conceptual proof of \cite[Prop.~8.12]{SV}.
In particular, \cite[Conj.~4]{SV} on the existence of a multiplicative Chow--K\"unneth decomposition for hyper-K\"ahler varieties $X$ implies the vanishing of the modified diagonal $\Gamma^{2n+1}(X,o)$, which is a conjecture of O'Grady~\cite{OG-diag}.

On the other hand, similar arguments show that if we have the weaker assumption that the birational motive $\ho(X)$ admits a co-multiplicative birational Chow--K\"unneth decomposition $\{\varpi_0^X, \ldots, \varpi_d^X \}$, then $\bar \delta^{d} = 0$ in $\Hom\big(\ho(X), \ho(X)^{\otimes d+1}\big)$ and, if $\ho_1(X)=0$, $\bar \delta^{\lfloor d/2\rfloor} = 0$ in $\Hom\big(\ho(X), \ho(X)^{\otimes \lfloor d/2\rfloor+1}\big)$, or, equivalently by Lemma~\ref{lem:zero},
 $([x]-o)^{\times d+1} = 0$ in $\CH_0(X^{d+1})$ for all $x\in X$ and, if $\ho_1(X)=0$,  $([x]-o)^{\times \lfloor d/2\rfloor+1} = 0$ in $\CH_0(X^{\lfloor d/2\rfloor+1})$ for all $x\in X$.
The arguments of Voisin~\cite[\S 3]{VoisinGT} show that if for some $k$ we have $([x]-o)^{\times {k+1}} = 0 $ in $\CH_0(X^{k+1})$ for all points $x\in X$, then the modified diagonal $\Gamma^{m}(X,o)$ vanishes for all $m\geq (\dim X+1)(k+1)$. In particular, for the $2n$-dimensional hyper-K\"ahler varieties considered in Proposition~\ref{P:smash-hyperK}, we have  $\Gamma^{m}(X,o) = 0$ for all $m\geq (2n+1)(n+1)$.
\medskip

	Focusing on moduli spaces of stable sheaves on K3 surfaces,
	 Barros, Flapan, Marian and Silversmith have independently introduced in \cite[\S 4]{BFMS} the following ascending filtration on $\CH_i(\operatorname{M}_\sigma(v))$\,:
	\begin{equation}\label{E:BFMS}
	S^{\mathrm{BFMS}}_k\CH_i(\operatorname{M}_\sigma(v))  := \{ \alpha \in \CH_i(\operatorname{M}_\sigma(v)) \ \big\vert \ p_0^*\alpha \cdot \bar \delta^{i+k} =0 \mbox{ in } \CH_i(\operatorname{M}_\sigma(v)\times \operatorname{M}_\sigma(v)^{i+k+1}) \},
	\end{equation}
	where $p_0 : \operatorname{M}_\sigma(v) \times \operatorname{M}_\sigma(v)^{i+k+1} \to \operatorname{M}_\sigma(v)$ is the projection on the first factor. (Note that this filtration is denoted $S$ in \cite{BFMS} and that $\bar \delta^j = \overline{\Delta}_{0,1}\cdots \overline{\Delta}_{0,j+1}$ by~\eqref{R:comult}). We note that the filtration \eqref{E:BFMS} can in fact be defined for any smooth projective variety $X$ equipped with a unit $o\in \CH_0(X)$,
	 and that we then have the obvious inclusion $S_k^{\mathrm{BFMS}}\CH_i(X) \subseteq R_{i+k}\CH_i(X)$ for all $i\geq 0$ and all $k\geq -i$. The above inclusion is in fact an equality\,:

	\begin{prop}\label{P:BFMS-R}
We have
$S^{\mathrm{BFMS}}_k\CH_i(X)  = R_{i+k}\CH_i(X)$.
	\end{prop}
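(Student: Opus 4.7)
The inclusion $S^{\mathrm{BFMS}}_k\CH_i(X) \subseteq R_{i+k}\CH_i(X)$ is noted in the excerpt (apply $(p_{X^{i+k+1}})_*$ to the defining vanishing), so it suffices to prove the reverse inclusion. My plan is to establish the single identity, in $\CH_i(X\times X^{n+1})$ canonically identified with $\CH_i(X^{n+2})$ by ordering factors,
$$p_0^*\alpha\cdot \bar\delta^n \ = \ \bar\delta^{n+1}_*\alpha \ + \ o\times \bar\delta^n_*\alpha, \qquad \alpha \in \CH_i(X),\ n\geq 0.$$
Granting this, if $\alpha \in R_{i+k}\CH_i(X)$ then $\bar\delta^{i+k}_*\alpha = 0$ by definition, and also $\bar\delta^{i+k+1}_*\alpha = 0$ because the recursion $\bar\delta^{n+1} = (\bar\delta\otimes \mathrm{id}^{\otimes n})\circ \bar\delta^n$ implies $R_n\subseteq R_{n+1}$. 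Setting $n=i+k$ in the displayed identity then yields $p_0^*\alpha\cdot \bar\delta^{i+k} = 0$, i.e., $\alpha \in S^{\mathrm{BFMS}}_k\CH_i(X)$.

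To derive the identity, I plan to use the following factorization of morphisms $\h(X) \to \h(X^{n+2})$ in the covariant category of Chow motives:
$$\bar\delta^{n+1} \ = \ \bigl(\bar p\otimes \mathrm{id}^{\otimes n+1}\bigr) \circ \bigl(\mathrm{id}_{\h(X)}\otimes \bar\delta^n\bigr) \circ \delta,$$
where $\bar p = \mathrm{id}_{\h(X)} - o\,\epsilon$ is the idempotent with kernel spanned by $o$. This follows from writing $\bar\delta^{n+1} = \bar p^{\otimes n+2}\circ \delta^{n+1}$, invoking the co-associativity $\delta^{n+1} = (\mathrm{id}\otimes \delta^n)\circ \delta$, and using the interchange law $(\bar p\otimes \bar p^{\otimes n+1})\circ (\mathrm{id}\otimes \delta^n) = \bar p\otimes \bar\delta^n = (\bar p\otimes\mathrm{id}^{\otimes n+1})\circ(\mathrm{id}\otimes\bar\delta^n)$. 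Two elementary cycle-level computations then convert the factorization into the identity above: first, $(\mathrm{id}\otimes \bar\delta^n)_*(\delta_*\alpha) = p_0^*\alpha\cdot \bar\delta^n$, obtained by unwinding the correspondence action after identifying $\delta_*\alpha$ with the diagonal cycle $\iota_*\alpha \in \CH_i(X\times X)$; second, $(\bar p\otimes \mathrm{id}^{\otimes n+1})_*\eta = \eta - o\times (p_{X^{n+1}})_*\eta$ for any $\eta \in \CH_*(X\times X^{n+1})$, which is immediate from the decomposition $\bar p\otimes \mathrm{id}^{\otimes n+1} = \mathrm{id}^{\otimes n+2} - (o\epsilon)\otimes \mathrm{id}^{\otimes n+1}$.

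The main obstacle is purely bookkeeping: one must verify that the ordering of factors in $X^{n+2}$ produced by the factorization agrees with the one implicit in $p_0^*\alpha\cdot \bar\delta^n$ (in both, the distinguished $X$ factor should play the role of the source position of $\bar\delta^n$). Once this is pinned down, the remainder is a formal manipulation of the co-algebra identities for $(\h(X), \delta, \epsilon, o)$, requiring no further input. As a sanity check, in the case $n=0$ and $\alpha \in \CH_0(X)$, the identity specialises to $\bar\delta^1_*\alpha = \iota_*\alpha - \alpha\times o - o\times \alpha + \deg(\alpha)\cdot (o\times o)$, with the last term arising precisely from $o\times\bar\delta^0_*\alpha = o\times(\alpha-\deg(\alpha)o)$, which matches the direct expansion of $\bar\delta^1$ from~\eqref{R:comult}.
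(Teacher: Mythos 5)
Your proposal is correct, and the key identity
$$p_0^*\alpha\cdot \bar\delta^{\,n} \;=\; \bar\delta^{\,n+1}_*\alpha \;+\; o\times \bar\delta^{\,n}_*\alpha$$
does hold with the factor ordering you indicate (the distinguished factor of $X^{n+2}$ carries both the class $p_0^*\alpha$ and the $o$ in the last term; since $\bar\delta^{n+1}_*\alpha$ is symmetric in all $n+2$ factors, no further care is needed there). Both of your cycle-level lemmas check out: $(\mathrm{id}_X\otimes\Gamma)_*(\delta_*\alpha)=p_X^*\alpha\cdot\Gamma$ is a standard push--pull computation valid for any correspondence $\Gamma$, and the second lemma is immediate. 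Your route is, however, genuinely different from the paper's. The paper's argument (attributed to A.~Marian) introduces the auxiliary cycle $\gamma:=\alpha_0\cdot\Delta_{0,1}\cdot\overline\Delta_{0,2}\cdots\overline\Delta_{0,k+1}$ on $X\times X^{k+1}$, rewrites it in two ways using the relations $\Delta_{0,1}\cdot\overline\Delta_{0,j}=\Delta_{0,1}\cdot\overline\Delta_{1,j}$ and $o_1\cdot\overline\Delta_{1,k+1}=0$, and compares the two resulting expressions for $\pi_*\gamma$; this yields the implication $\bar\delta^{k-1}_*\alpha=0\Rightarrow \alpha_0\cdot\bar\delta^{k-1}=0$ directly, with no need for the monotonicity $R_n\subseteq R_{n+1}$. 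Your argument instead extracts everything from the co-algebra factorization $\bar\delta^{n+1}=(\bar p\otimes\mathrm{id}^{\otimes n+1})\circ(\mathrm{id}\otimes\bar\delta^{n})\circ\delta$ (co-associativity plus the interchange law for correspondences), which buys a sharper statement: an exact formula for the defect between $p_0^*\alpha\cdot\bar\delta^{n}$ and $\bar\delta^{n+1}_*\alpha$, rather than only a vanishing implication, at the trivial cost of invoking $R_n\subseteq R_{n+1}$. It is also more in the spirit of the co-algebra formalism the paper sets up in Section~\ref{S:co-alg}, so I consider it a legitimate and arguably cleaner alternative proof.
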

	\begin{proof}
	I thank Alina Marian~\cite{Marian} for mentioning the equality in the statement of the proposition and for  providing the following combinatorial argument.
For ease of notation, we write $\bar \Delta := \Delta_X - X\times o$ and $\overline \Delta_{i,j}$ (resp.\ $\Delta_{i,j}$) for the pull-back of $\overline \Delta$ (resp.\ $\Delta_X$) along the projection $X^n \to X\times X$ on the product of the $i$-th and $j$-th factors. We also write $\alpha_i$ for the pull-back of a cycle $\alpha$ on $X$ along the projection $X^n \to X$ on the $i$-th factor.
For $\alpha \in \CH_*(X)$, we show $$\bar \delta^{k-1}_*\alpha=0 \mbox{ in } \CH_*(X^k) \iff \alpha_0\cdot \bar \delta^{k-1} = 0 \mbox{ in } \CH_*(X\times X^k),$$
where the left factor of $X\times X^k$ is the $0$-th factor.
The direction $\Leftarrow$ is trivial. To see the other direction, recall from~\eqref{R:comult} that $\bar \delta^{k-1} = \overline \Delta_{0,1}\cdot \overline \Delta_{0,2}  \cdots \overline \Delta_{0,k}$ and consider the cycle 
$$\gamma := \alpha_0 \cdot \Delta_{0,1}\cdot \overline \Delta_{0,2}  \cdots \overline \Delta_{0,k+1} \in \CH_*(X\times X^{k+1}).$$
We write $\gamma$ in two different ways\,:
\begin{enumerate}
	\item $\gamma = \alpha_1 \cdot \Delta_{0,1}\cdot \overline \Delta_{1,2}  \cdots \overline \Delta_{1,k+1}$ (switch the indices $0$ and $1$ everywhere)\,;
	\item $\gamma = \alpha_0 \cdot \Delta_{0,1}\cdot \overline \Delta_{0,2}  \cdots \overline \Delta_{0,k}\cdot  \overline \Delta_{1,k+1}$ (switch $0$ and $1$ on the last factor only).
\end{enumerate}
We now calculate $\pi_*\gamma$ with $\pi : X\times X^{k+1} \to X^{k+1}$ the projection on the last $k+1$ factors. From~(1) we have
$$\pi_*\gamma = \alpha_1 \cdot  \overline \Delta_{1,2}  \cdots \overline \Delta_{1,k+1},$$
while from (2) we first have $\gamma = \alpha_0 \cdot \overline\Delta_{0,1}\cdot \overline \Delta_{0,2}  \cdots \overline \Delta_{0,k}\cdot  \overline \Delta_{1,k+1}$ since $o_1\cdot \overline \Delta_{1,k+1} = 0$, and then
$$\pi_*\gamma = \pi_*(\alpha_0 \cdot \overline\Delta_{0,1}\cdot \overline \Delta_{0,2}  \cdots \overline \Delta_{0,k}) \cdot  \overline \Delta_{1,k+1},$$
where $\pi_*(\alpha_0 \cdot \overline\Delta_{0,1}\cdot \overline \Delta_{0,2}  \cdots \overline \Delta_{0,k}) $ is understood to be pulled back from $X^k$ under the projection $X^{k+1} \to X^k$ on the first $k$ factors.
Clearly then, the vanishing of $\bar \delta^{k-1}_*\alpha = \pi_*(\alpha_0 \cdot \overline\Delta_{0,1}\cdot \overline \Delta_{0,2}  \cdots \overline \Delta_{0,k})$ implies the vanishing of  $\alpha_1 \cdot  \overline \Delta_{1,2}  \cdots \overline \Delta_{1,k+1}$ which is nothing but the cycle $\alpha_0\cdot \bar \delta^{k-1} $ with a shift of indices.
	\end{proof}

\subsection{The co-radical filtration on the Chow ring of abelian varieties}
\label{SS:abelian}
The aim of this paragraph is to spell out the analogues of the results of \S \ref{S:MBCK}, \S \ref{S:strict} and \S \ref{S:corad} in the case of abelian varieties.
Let $A$ be an abelian variety over a field $K$ and denote $g$ its dimension. Let $[n] : A \to A$ denote the multiplication by $n$ morphism.
Recall from~\cite{Beauville} that the Chow ring of an abelian variety~$A$ admits a bigrading
\begin{equation}\label{E:Beauville}
\CH^*(A) = \bigoplus_{i-g\leq j \leq i} \CH^i(A)_{(j)}, 
\end{equation}
where $\CH^i(A)_{(j)} := \{ \alpha \in \CH^i(A) \ \big\vert \ [n]^*\alpha = n^{2i-j}\alpha \mbox{ for all } n\in \Z \}$
and that 
\begin{equation}\label{E:BB-abelian}
F^k \CH^*(A) := \bigoplus_{j\geq k}  \CH^*(A)_{(j)}
\end{equation}
is conjecturally the Bloch--Beilinson filtration (in particular,  it is expected that $\CH^*(A)_{(j)}=0$ for all $j<0$, or equivalently, $F^0\CH^*(A) = \CH^*(A)$).

   Fix an integer $n$ distinct from $-1,0$, or $1$. 
 The Deninger--Murre \cite{DM}  decomposition of the (contravariant) Chow motive $$\h(A) = \h^0(A) \oplus \cdots \oplus \h^{2g}(A)$$ is a Chow--K\"unneth decomposition that can be obtained by considering the projectors on the various eigenspaces for the  multiplication by $n$ morphism, \emph{i.e.}, 
 \begin{equation}\label{E:DM}
 \pi^k_A := \prod_{0\leq i \leq 2g, i\neq k}\frac{[n]^*-n^i}{n^k-n^i}.
 \end{equation}
   With that description, it is clear that the Deninger--Murre decomposition provides a multiplicative Chow--K\"unneth decomposition of $\h(A)$, and in particular a grading of $\h(A)$ considered as an algebra object. It is also clear that 
   $\CH^i(A)_{(j)} = \CH^i(\h^{2i-j}(A))$, \emph{i.e.}, that the Deninger--Murre decomposition lifts to $\h(A)$ the Beauville decomposition on $\CH^*(A)$.

   The sum morphism $\Sigma : A \times A \to A$ induces a map $\Sigma_*: \h(A) \otimes \h(A) \to \h(A)(-g)$  called the \emph{Pontryagin product} and that we more commonly denote $\ast$. K\"unnemann~\cite{K-abelian} showed that the Deninger--Murre projectors can be alternately described as 
   \begin{equation}\label{E:K-proj}
\pi^k_A = \frac{1}{(2g-k)!}\Bigg(\sum_{n=1}^{2g} \frac{(-1)^{n-1}}{n}\big(\mathrm{id}_A - A\times 0\big)^{\ast n}\Bigg)^{\ast (2g-k)}.
   \end{equation}
(In the above formula, the Pontryagin product is to be understood on $A\times A$ viewed as an abelian scheme over $A$ via the first projection.)
   With that description, K\"unnemann shows (see Theorem~\ref{T:K}) that the multiplication map
   \begin{equation}\label{E:K}
   \operatorname{Sym}^* \h^1(A) \stackrel{\sim}{\longrightarrow} \h(A)
   \end{equation}
   is an isomorphism of graded algebra objects, with inverse given by the sum of the isomorphisms
  \begin{equation*}
  \frac{1}{k!} (\Sigma^k)^* : \h^k(A) \stackrel{\sim}{\longrightarrow} \operatorname{Sym}^k \h^1(A),
  \end{equation*}
	where $\Sigma^k : A^k \to A$ is the sum homomorphism.
	\medskip
	
	Dualizing \eqref{E:K} and passing to covariant Chow motives, and setting $\h_k(A) := \pi_A^{2g-k}\h(A)$, one obtains that co-multiplication induces an isomorphism of unital graded co-algebra objects
	\begin{equation}\label{E:K-birat}
\h(A) \stackrel{\sim}{\longrightarrow}  \operatorname{Sym}^* \h_1(A)
	\end{equation}
	 with $\h(A)$ endowed with the unit $0: \mathds{1} \to \h(A)$. Its inverse is given by the sum of the isomorphisms
	\begin{equation}\label{E:Pontryagin}
	\frac{1}{k!} \Sigma^k_* : \operatorname{Sym}^k \h_1(A)  \stackrel{\sim}{\longrightarrow}  \h_k(A).
	\end{equation}
	This isomorphism \eqref{E:K-birat} in particular endows the (covariant) Chow motive $\h(A)$ with a strict grading in the sense of \S \ref{SS:strictgrading}.
	Note also as in \S \ref{SS:co-mult} that, for $k>0$ the degree-$k$ part of the isomorphism \eqref{E:K-birat} 
	is nothing but the iterated reduced co-multiplication $\bar \delta^{k-1}$.
We then have\,:

\begin{thm}\label{thm:abelian}
	Let $A$ be an abelian variety of dimension $g$ over a field $K$. The co-radical filtration $R'_k\CH^i(A):= R_{2(g-i)+k}\CH^i(A)$, where $R_\bullet$ is as in Definition~\ref{D:corad-positive}, relates to the Beauville decomposition as follows\,:
	 \begin{equation}\label{E:R-beauville}
R'_k\CH^i(A) = \bigoplus_{j=i-g}^k \CH^i(A)_{(j)}.
	 \end{equation}
	 In particular, 
	 \begin{enumerate}[(a)]
      \item we have 
      $$0=R'_{i-g-1}\CH^i(A) \subseteq R'_{i-g}\CH_i(A) \subseteq \cdots \subseteq R'_{i}\CH^i(A) = \CH_i(A),$$ and conjecturally $R'_{-1}\CH^i(A) = 0$\,;
      \item  the co-radical filtration $R'_\bullet$ on $\CH^i(A)$ is opposite to the filtration $F^\bullet$  of~\eqref{E:BB-abelian} (which, conjecturally is the Bloch--Beilinson filtration) and we have
      $$R'_k\CH^i(A) \cap F^k\CH^i(A) = \CH^i(A)_{(k)}\,;$$
      \item  the co-radical filtration $R'_\bullet$  is an algebra filtration on $\CH^i(A)$, \emph{i.e.},
      $$R'_{k}\CH^i(A) \cdot R'_{k'}\CH^j(A) \subseteq R'_{k+k'}\CH^{i+j}(A)\,;$$
      \item every point $x\in A$ satisfies $$([x]-[0])^{\times g+1} = 0 \ \mbox{in}\ \CH_0(A^{g+1}).$$
	 \end{enumerate}
\end{thm}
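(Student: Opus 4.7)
The plan is to deduce Theorem~\ref{thm:abelian} from two ingredients already in place in the preceding discussion. First, the isomorphism~\eqref{E:K-birat} obtained by dualizing Theorem~\ref{T:K} exhibits the covariant Chow motive $\h(A)$ as a strictly graded unital co-algebra object in the sense of \S\ref{SS:strictgrading}, with grading $\h(A) = \h_0(A) \oplus \cdots \oplus \h_{2g}(A)$ and unit $[0] : \mathds{1} \to \h(A)$. Second, the abstract Proposition~\ref{P:crucial}, applied to such an object with $\operatorname{C} = \Hom(\mathds{1}(l),-)$, identifies the filtration $G_\bullet$ induced by the grading with the co-radical filtration $R_\bullet$ on $\CH_l(A)$. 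Since all four parts (a)--(d) can be read off from the explicit formula~\eqref{E:R-beauville}, the task reduces to establishing that formula.

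To align the abstract filtration with Beauville's decomposition, I would use that the Deninger--Murre projectors $\pi^j_A$ are the $[n]^*$-eigenspace projectors, so that the identity $\CH^i(A)_{(j)} = \CH^i(\h^{2i-j}(A))$ recalled in the text rewrites covariantly as $\CH_l(\h_m(A)) = \CH^{g-l}(A)_{(m-2l)}$. Summing over $m \leq K$ and invoking Proposition~\ref{P:crucial}, I would then obtain
$$R_K \CH_l(A) = G_K \CH_l(A) = \bigoplus_{m \leq K} \CH^{g-l}(A)_{(m-2l)}.$$
Substituting $l = g-i$, $K = 2(g-i)+k$, and reindexing by $j = m - 2l$ (using that $\CH^i(A)_{(j)}$ vanishes outside the Beauville range $i-g \leq j \leq i$) would yield exactly~\eqref{E:R-beauville}.

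The four consequences then follow immediately. Item (a) is read off from~\eqref{E:R-beauville}, the conjectural vanishing $R'_{-1}\CH^i(A) = 0$ amounting to Beauville's expectation that $\CH^i(A)_{(j)} = 0$ for $j<0$. Item (b) is the tautological identity $\bigl(\bigoplus_{j\leq k}\CH^i(A)_{(j)}\bigr) \cap \bigl(\bigoplus_{j\geq k}\CH^i(A)_{(j)}\bigr) = \CH^i(A)_{(k)}$. Item (c) reduces to the multiplicativity $\CH^i(A)_{(j)} \cdot \CH^{i'}(A)_{(j')} \subseteq \CH^{i+i'}(A)_{(j+j')}$ of the Beauville decomposition, which is immediate from $[n]^*$ being a ring map on $\CH^*(A)$. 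For item (d), I would apply~\eqref{E:R-beauville} with $i=g$ to obtain $R'_g \CH^g(A) = \CH^g(A)$, hence in particular $[x]-[0] \in R_g \CH_0(A)$; Proposition~\ref{P:RT} together with formula~\eqref{eq:redcomult} then gives $([x]-[0])^{\times g+1} = \bar\delta^g_*([x]-[0]) = 0$.

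I do not expect any real obstacle: the strict grading is already produced in the paragraph preceding the theorem, and Proposition~\ref{P:crucial} supplies the rest. The only care needed will be in tracking the dualization from contravariant to covariant motives and in the index shift $k \leftrightarrow 2(g-i)+k$ relating the abstract co-radical filtration to its codimension-graded variant $R'_\bullet$.
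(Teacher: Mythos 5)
Your proposal is correct and follows the same route as the paper's (very terse) proof: the paper likewise deduces \eqref{E:R-beauville} by combining Beauville's decomposition with the strict grading of $\h(A)$ coming from \eqref{E:K-birat} and Proposition~\ref{P:crucial} (equivalently, via Theorem~\ref{T:K} and Proposition~\ref{P:corad-pos}), and then derives (a)--(c) from the standard properties of the Beauville decomposition and (d) from Proposition~\ref{P:RT}. Your index bookkeeping $\CH_l(\h_m(A)) = \CH^{g-l}(A)_{(m-2l)}$, which the paper leaves implicit, is the correct covariant translation (it is the $n^m$-eigenspace of $[n]_*$), so no gap remains.
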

\begin{proof} 
The identity~\eqref{E:R-beauville} is the combination of Beauville's decomposition~\eqref{E:Beauville}, Theorem~\ref{T:K} and Proposition~\ref{P:corad-pos}. Alternately, it is the combination of Beauville's decomposition~\eqref{E:Beauville}, the strict grading of $\h(A)$ provided by \eqref{E:K-birat} and Proposition~\ref{P:crucial}. 
	Items (a), (b) and (c) then follow from the properties of the Beauville decomposition recalled above, and item (d) follows from (a) and Proposition~\ref{P:RT}.
\end{proof}

\begin{rmk}
Alina Marian~\cite{Marian} has informed us that the identity~\eqref{E:R-beauville} can also be obtained by using the coincidence (up to shift) of the co-radical filtration $R_\bullet$ with the filtration $S_\bullet^{\mathrm{BFMS}}$ defined in~\eqref{E:BFMS}, and by considering the eigenspace decomposition of $\CH^*(A\times A^{k+1})$ for the action of the map that acts as multiplication by $N$ on the first factor $A$ and as the identity on the second factor $A^{k+1}$. (The latter is based on a discussion between Alina Marian and Qizheng~Yin).
\end{rmk}

\begin{rmk}\label{rmk:co-radical-abelian}
	In contrast to the case of hyper-K\"ahler varieties where it is expected (Conjecture~\ref{conj:Voisin-split-cogen}) that $R_k\CH_0(X)$ is spanned by classes of points, it is in general not true that  $R_k\CH_0(A)$ is spanned by classes of points for a complex abelian variety $A$. Voisin~\cite[Thm.~1.8]{Voi-gon} has indeed showed that if $A$ is a very general abelian variety of dimension $g\geq 2k-1$, then the set $A_k:=\{x\in A \ \big\vert\ ([x]-[0])^{*k} = 0 \mbox{ in } \CH_0(A)\}$ is countable, and hence so is the subset $R_{k-1}A := \{x\in A \ \big\vert\ ([x]-[0])^{\times k} = 0 \mbox{ in } \CH_0(A^k)\}$. However, by Theorem~\ref{thm:abelian}, the set $R_{k-1}A$ cannot span $R_{k-1}\CH_0(A)$ for $k>1$ since the group $\CH^g(A)_{(1)} \simeq A(\C)$ is not spanned by a countable subset.
\end{rmk}

\section{$\delta$-filtrations}\label{S:delta}

Let $X$ be a hyper-K\"ahler variety. 
We saw that the Voisin filtration on $\CH_0(X)$ is expected to be induced by a strict grading on the birational motive of $X$, and as such admits a splitting that is ``compatible'' with the diagonal embedding map.
Our aim here is to provide, somewhat artificially as we feel that the language of birational motives is the right language to use in that setting, a strong notion of splitting of an ascending filtration on $\CH_0$ that avoids the use of birational motives.

\subsection{$\delta$-filtrations and $\delta$-gradings}
The following definitions are justified by Proposition~\ref{P:justify} below.

\begin{defn}
	Let $X$ be a smooth projective variety over a  field~$K$ and let $F_\bullet$ be an ascending filtration on $\CH_0(X)$ with $F_{-1}\CH_0(X) = \{0\}$. For all $n>1$, we define the filtration $F^\delta_\bullet$ on $\CH_0(X^n)$\,:
	$$F^\delta_k\CH_0(X^n) := \operatorname{im}\Big( \bigoplus_{i_1+\cdots+i_n=k} F_{i_1} \CH_0(X)\otimes\cdots \otimes F_{i_n}\CH_0(X) \to \CH_0(X^n)\Big).$$
\end{defn}

Recall from \S \ref{SS:0cycle} that if $K$ is algebraically closed, then the exterior product map $\CH_0(X)\otimes \CH_0(Y) \to \CH_0(X\times_KY)$ is surjective for all smooth projective varieties $X$ and $Y$ over $K$. Therefore, if the filtration $F_\bullet \CH_0(X)$ is exhaustive, then so is the induced filtration $F^\delta_\bullet\CH_0(X^n)$ for $n>0$.
We note however that if $\CH_0(X) = \bigoplus_{k\geq 0} \CH_0(X)_{(k)}$ is a finite grading, \emph{i.e.}, a finite direct sum decomposition, then 
\begin{equation}\label{eq:grading}
\CH_0(X^n)^\delta_{(k)} := \operatorname{im}\Big( \bigoplus_{i_1+\cdots+i_n=k}  \CH_0(X)_{(i_1)}\otimes\cdots \otimes \CH_0(X)_{(i_n)} \to \CH_0(X^n)\Big)
\end{equation}
need not define a grading on $\CH_0(X^n)$ for $n>1$, \emph{i.e.}, the pieces $\CH_0(X^n)^\delta_{(k)}$ for $k\geq 0$ need not be in a direct sum.

\begin{defn}
	\label{D:delta-fil}
	Let $X$ be a smooth projective variety and denote $\delta^n : X \hookrightarrow X^{n+1}$ the diagonal embedding.
	
	\noindent $\bullet$ A \emph{$\delta$-filtration} on $\CH_0(X)$ is an exhaustive ascending filtration $F_\bullet$ with $F_{-1}\CH_0(X) = \{0\}$ such that 
	\begin{enumerate}[(a)]
		\item $\epsilon_* : F_0\CH_0(X) \to \Q \quad \mbox{is an isomorphism}$\,;
		\item $ \delta^n_* \big( F_k \CH_0(X)\big)  \subseteq F^\delta_k \CH_0(X^{n+1})$ for all $k$ and $n$.
	\end{enumerate}
	
	\noindent $\bullet$	A \emph{$\delta$-grading} on $\CH_0(X)$ is a finite direct sum decomposition $\CH_0(X) = \bigoplus_{k\geq 0}\CH_0(X)_{(k)}$ such that 
	\begin{enumerate}[(a)]
		\item $\epsilon_* : \CH_0(X)_{(0)} \to \Q $ is an isomorphism and $\ker (\epsilon_*: \CH_0(X) \to \Q) =  \bigoplus_{k > 0}\CH_0(X)_{(k)} $\,;
		\item  $\delta^n_*\big( \CH_0(X)_{(k)}\big) \subseteq  \CH_0(X^{n+1})^\delta_{(k)}$ for all $k$ and $n$\,;
		\item  $\CH_0(X^n) = \bigoplus_{k \geq 0} \CH_0(X^n)^\delta_{(k)}$ for all $n>0$.
	\end{enumerate}
	In items (b) and (c) above,  $\CH_0(X^n)^\delta_{(k)}$ is defined as in \eqref{eq:grading}.
	
	\noindent $\bullet$ A $\delta$-grading 
	is said to be \emph{strict} 
	if $\bar \delta^k_* : \CH_0(X)_{(k+1)} \to \CH_0(X^{k+1})$ is injective for all $k\geq 0$, where 
	$\bar \delta^k = \bar p^{\otimes k+1}\circ \delta^k$ with $\delta^k : X\hookrightarrow X^{k+1}$ the diagonal embedding and $\bar p = \Delta_X - X\times o$ with $o$ the degree-1 generator of $\CH_0(X)_{(0)}$.
	
	\noindent $\bullet$	The $\delta$-filtration associated to a $\delta$-grading is the filtration defined by $$F^\delta_k\CH_0(X^n) := \bigoplus_{r\leq k} \CH_0(X^n)^\delta_{(r)}.$$
	
	\noindent $\bullet$	A $\delta$-filtration is said to be \emph{split}  if it is the filtration associated to a $\delta$-grading. 
\end{defn}

\begin{prop}\label{P:justify}
	Let $X$ be a smooth projective variety. If $\ho(X)$ admits a unital grading (resp.\ a strict grading) $\ho(X) = \ho(X)_{(0)} \oplus \cdots \oplus \ho(X)_{(n)}$, then the associated  grading on $\CH_0(X)$ defined by $\CH_0(X)_{(k)}:= \CH_0(\ho(X)_{(k)})$ is a $\delta$-grading (resp.\ a strict $\delta$-grading).
\end{prop}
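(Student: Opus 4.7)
The plan is to verify directly the three axioms of a $\delta$-grading by applying the functor $\CH_0(-)=\Hom(\mathds{1},-)$ to the defining properties of a unital grading on the co-algebra object $\ho(X)$, working throughout over a universal domain so that the exterior product map on zero-cycles is surjective (as discussed in~\S\ref{SS:0cycle}). The key identification to set up at the outset is that, writing $V_{i_1,\ldots,i_m}:=\Hom(\mathds 1,\ho(X)_{(i_1)}\otimes\cdots\otimes\ho(X)_{(i_m)})$, the exterior product induces a surjection
\[
\CH_0(X)_{(i_1)}\otimes\cdots\otimes\CH_0(X)_{(i_m)}\twoheadrightarrow V_{i_1,\ldots,i_m},
\]
so that $V_{i_1,\ldots,i_m}$ is exactly the image appearing in the definition of $\CH_0(X^m)^\delta_{(k)}$ when $i_1+\cdots+i_m=k$. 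This follows because $\CH_0(X^m)$ is spanned by exterior products of closed points, each of which decomposes componentwise under the grading $\CH_0(X)=\bigoplus_k\CH_0(X)_{(k)}$.

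With this identification in hand, axiom~(a) is immediate: a unital grading means $\epsilon_0:\ho(X)_{(0)}\stackrel{\sim}{\to}\mathds 1$ and $\epsilon_i=0$ for $i>0$, so applying $\Hom(\mathds 1,-)$ yields $\epsilon_*:\CH_0(X)_{(0)}\stackrel{\sim}{\to}\Q$ with kernel equal to $\bigoplus_{k>0}\CH_0(X)_{(k)}$. Axiom~(c) is obtained by decomposing $\ho(X)^{\otimes m}$ into summands $\ho(X)_{(i_1)}\otimes\cdots\otimes\ho(X)_{(i_m)}$ indexed by multi-degrees and applying $\Hom(\mathds 1,-)$, which gives
\[
\CH_0(X^m)\;=\;\bigoplus_{i_1,\ldots,i_m}V_{i_1,\ldots,i_m}\;=\;\bigoplus_{k\geq 0}\CH_0(X^m)^\delta_{(k)}.
\]
For axiom~(b), iterating the co-algebra grading property gives that $\delta^n:\ho(X)\to\ho(X)^{\otimes n+1}$ restricted to $\ho(X)_{(k)}$ factors through $\bigoplus_{i_1+\cdots+i_{n+1}=k}\ho(X)_{(i_1)}\otimes\cdots\otimes\ho(X)_{(i_{n+1})}$; applying $\Hom(\mathds 1,-)$ and using the identification above yields the required inclusion $\delta^n_*(\CH_0(X)_{(k)})\subseteq\CH_0(X^{n+1})^\delta_{(k)}$.

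For the strict case, I would invoke Proposition~\ref{P:strictgrading} to translate strictness of the grading on $\ho(X)$ into the split injectivity of $\bar\delta^{k}:\ho(X)_{(k+1)}\to\ho(X)_{(1)}^{\otimes k+1}$ for every $k\geq 0$. Applying the additive functor $\Hom(\mathds 1,-)$ preserves split monomorphisms, so $\bar\delta^{k}_*:\CH_0(X)_{(k+1)}\to V_{1,\ldots,1}\subseteq\CH_0(X^{k+1})$ is injective, which is exactly the strictness condition in Definition~\ref{D:delta-fil}. The only real subtlety in the whole argument is the description of $V_{i_1,\ldots,i_m}$ as the image of the exterior product map, which requires the base field to be algebraically closed (otherwise $\CH_0(X)\otimes\CH_0(X)\to\CH_0(X\times X)$ can fail to be surjective, as discussed in~\S\ref{SS:0cycle}); this is the main thing to pin down carefully, and the remainder is a formal transfer of the co-algebraic identities from $\ho(X)$ to $\CH_0(X)$.
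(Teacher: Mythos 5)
Your argument is correct and is exactly the verification the paper has in mind: its own proof of Proposition~\ref{P:justify} is literally ``This is clear,'' and what you have written is the careful unwinding of that claim (apply $\Hom(\mathds 1,-)$ to the graded co-unit and co-multiplication, identify $\CH_0(X^m)^\delta_{(k)}$ with $\bigoplus_{i_1+\cdots+i_m=k}\Hom(\mathds 1,\ho(X)_{(i_1)}\otimes\cdots\otimes\ho(X)_{(i_m)})$ via surjectivity of exterior products over an algebraically closed field, and use that additive functors preserve split monomorphisms for strictness). You also correctly flag the one genuine hypothesis hiding in the background, namely that the identification of $V_{i_1,\ldots,i_m}$ with the image of the exterior product map needs $K$ algebraically closed, which is the setting the paper implicitly works in here.
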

\begin{proof}
	This is clear.
\end{proof}

\subsection{$\delta$-filtrations and the co-radical filtration}
Let $X$ be a smooth projective variety. Fix a unit $o\in \CH_0(X)$ and denote $R_\bullet$ the associated co-radical filtration on $\CH_0(X)$. Proposition~\ref{P:delta-coradical} below is a variant of Proposition~\ref{P:crucial}\,; it shows that $R_\bullet$ contains the maximal  $\delta$-filtration $F_\bullet\CH_0(X)$ such that $F_0\CH_0(X)=\Q o$, and that $R_\bullet$ is itself a $\delta$-filtration if $\CH_0(X)$ admits a strict $\delta$-grading with $\CH_0(X)_{(0)}=\Q o$.
(As already mentioned in \S \ref{SS:co-rad}, note the analogy with the fact~\cite[Lem.~11.2.1]{sweedler} that the filtration associated to a strict grading on a pointed irreducible co-algebra is necessarily the co-radical filtration).

\begin{prop}\label{P:delta-coradical}
	Let $X$ be a smooth projective variety and fix a unit $o\in \CH_0(X)$. If $F_\bullet\CH_0(X)$ is a $\delta$-filtration with $F_0\CH_0(X) =\Q o$,
	then 
	$$F_k\CH_0(X) \subseteq R_{k}\CH_0(X).$$ 
	If in addition $F_\bullet$ is the $\delta$-filtration associated to a strict $\delta$-grading $\CH_0(X) = \bigoplus_{k \geq 0}\CH_0(X)_{(k)}$,
	then  for all $k\geq 0$
	$$F_k\CH_0(X) = R_{k}\CH_0(X).$$
\end{prop}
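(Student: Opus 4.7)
The plan is to adapt the proof of Proposition~\ref{P:crucial} to the concrete setting of Chow groups, exploiting the explicit formula $\bar\delta^k = \bar p^{\otimes(k+1)}\circ \delta^k$ together with the basic identity $\bar p_* o = o - \deg(o)\cdot o = 0$. The argument is formal and reduces to playing the $\delta$-filtration axioms off against the pigeonhole principle.

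For the first inclusion $F_k\CH_0(X)\subseteq R_k\CH_0(X)$, I would take $\alpha \in F_k\CH_0(X)$ and apply axiom~(b) of the $\delta$-filtration to write $\delta^k_*\alpha$ as a finite sum of exterior products $\beta_1\times\cdots\times\beta_{k+1}$ with $\beta_j \in F_{i_j}\CH_0(X)$ and $i_1+\cdots+i_{k+1} = k$. Since each such term has $k+1$ factors with indices totalling $k$, the pigeonhole principle forces some $i_j = 0$, whence $\beta_j \in F_0\CH_0(X) = \Q o$ and $\bar p_*\beta_j = 0$. Applying $\bar p^{\otimes(k+1)}$ to each term therefore yields $0$, so $\bar\delta^k_*\alpha = 0$.

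For the reverse inclusion in the strict $\delta$-grading case, I would decompose $\alpha \in R_k\CH_0(X)$ as $\alpha = \sum_l \alpha_l$ with $\alpha_l \in \CH_0(X)_{(l)}$. The key structural point is that $\bar p^{\otimes(k+1)}$ preserves the direct sum decomposition $\CH_0(X^{k+1}) = \bigoplus_l \CH_0(X^{k+1})^\delta_{(l)}$ from condition~(c), because $\bar p_*$ acts on each factor as zero on $\CH_0(X)_{(0)}$ and as the identity on $\CH_0(X)_{(i)}$ for $i>0$ (the latter since $\deg = \epsilon_*$ vanishes on $\bigoplus_{i>0}\CH_0(X)_{(i)}$ by~(a)). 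Together with condition~(b) ($\delta^k_*\CH_0(X)_{(l)} \subseteq \CH_0(X^{k+1})^\delta_{(l)}$), this gives $\bar\delta^k_*\alpha_l \in \CH_0(X^{k+1})^\delta_{(l)}$, so the vanishing of $\bar\delta^k_*\alpha$ forces $\bar\delta^k_*\alpha_l = 0$ for each $l$ individually. To conclude $\alpha_l = 0$ for $l > k$, I would use the iterative factorization
\[
\bar\delta^{l-1} = (\bar\delta \otimes \mathrm{id}^{\otimes(l-2)}) \circ \cdots \circ (\bar\delta \otimes \mathrm{id}^{\otimes k}) \circ \bar\delta^k
\]
to deduce $\bar\delta^{l-1}_*\alpha_l = 0$, and then invoke strictness (injectivity of $\bar\delta^{l-1}_*$ on $\CH_0(X)_{(l)}$) to obtain $\alpha_l = 0$. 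Hence $\alpha \in \bigoplus_{l\leq k}\CH_0(X)_{(l)} = F_k\CH_0(X)$.

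The only step that is not immediate is the verification that $\bar p^{\otimes(k+1)}$ respects the grading on $\CH_0(X^{k+1})$ indexed by the total sum of the $i_j$; this rests on the explicit description of $\bar p_*$ as a graded projector. Once this is in place, both inclusions follow by a routine translation of the co-algebra argument of Proposition~\ref{P:crucial} into the explicit $\delta$-framework.
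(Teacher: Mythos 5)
Your argument is correct and is precisely the adaptation of Proposition~\ref{P:crucial} that the paper's own proof sketches and "leaves to the interested reader": the pigeonhole argument for $F_k\subseteq R_k$, and, for the converse, the observation that $\bar p^{\otimes(k+1)}_*$ preserves the decomposition $\CH_0(X^{k+1})=\bigoplus_l\CH_0(X^{k+1})^\delta_{(l)}$ of condition~(c) so that $\bar\delta^k_*\alpha=0$ forces $\bar\delta^k_*\alpha_l=0$ componentwise, after which strictness applies via the factorization of $\bar\delta^{l-1}$ through $\bar\delta^k$. No gaps; this is the intended proof written out in full.
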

\begin{proof}
Using the definition of a $\delta$-grading, namely the property that the subspaces $\CH_0(X^{k+1})^\delta_{(l)}$ (as defined in \eqref{eq:grading}) of $\CH_0(X^{k+1})$ are in a direct sum for varying $l$, the proof is an easy adaptation of the proof of Proposition~\ref{P:crucial} and is left to the interested reader. 
\end{proof}

Combining Proposition~\ref{P:justify} and Proposition~\ref{P:delta-coradical}, we see that Conjecture~\ref{conj2:MBCK-bis} implies\,:

\begin{conj}
Let $X$ be a hyper-K\"ahler variety. Then there exists a unit $o\in \CH_0(X)$ such that the associated co-radical filtration on $\CH_0(X)$ is a split $\delta$-filtration.
\end{conj}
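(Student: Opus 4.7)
The plan is to deduce the conjecture from the conjectural existence of a strict unital grading on the birational co-algebra $\ho(X)$, that is, from Conjecture~\ref{conj2:MBCK-bis}. Assuming the latter, we obtain a direct summand $\ho(X)_{(1)}$ together with an isomorphism of co-algebra objects $\ho(X) \stackrel{\sim}{\to} \operatorname{Sym}^{\leq n}\ho(X)_{(1)}$, whence a strict unital grading $\ho(X) = \ho(X)_{(0)}\oplus \cdots \oplus \ho(X)_{(n)}$ in the sense of \S\ref{SS:grading} and \S\ref{SS:strictgrading}. The inverse of the co-unit $\epsilon_0: \ho(X)_{(0)}\to \mathds 1$ then defines a unit $o : \mathds 1 \to \ho(X)$, which by \S\ref{SS:co-alg-biratmot} corresponds to a degree-$1$ zero-cycle $o\in\CH_0(X)$ satisfying $\delta_* o = o\times o$.

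From this point the argument is formal. Proposition~\ref{P:justify} ensures that the induced decomposition $\CH_0(X)_{(k)} := \CH_0(\ho(X)_{(k)})$ is a strict $\delta$-grading in the sense of Definition~\ref{D:delta-fil}, with $\CH_0(X)_{(0)} = \Q o$; indeed the axioms~(a)--(c) of Definition~\ref{D:delta-fil} follow directly from the fact that the grading on $\ho(X)$ is a co-algebra grading and that $\epsilon_0$ is an isomorphism, while strictness at the level of $\CH_0$ follows from strictness at the motivic level via Lemma~\ref{lem:zero}. Proposition~\ref{P:delta-coradical} then identifies the associated $\delta$-filtration with the co-radical filtration $R_\bullet \CH_0(X)$ attached to $o$, proving that $R_\bullet\CH_0(X)$ is a split $\delta$-filtration. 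Unconditionally, this two-step reduction applies to the hyper-K\"ahler varieties of types~\eqref{hilb}, \eqref{moduli}, \eqref{kum} and~\eqref{fano}, since Theorem~\ref{thm:cogeneration} produces a strict grading on $\ho(X)$ in those cases; the unit $o$ is then the Beauville--Voisin class.

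The main obstacle is, of course, establishing Conjecture~\ref{conj2:MBCK-bis} itself for an arbitrary hyper-K\"ahler variety $X$. This amounts to (i) carving out a motivic summand $\ho(X)_{(1)}$ whose transcendental cohomology matches $\HH^2_{\tr}(X)$, which is already delicate and essentially requires a co-multiplicative birational Chow--K\"unneth decomposition as in Conjecture~\ref{conj:BMCK}, and (ii) constructing, for each $k\geq 2$, a one-sided inverse to the iterated reduced co-multiplication $\bar\delta^{k-1} : \ho(X)_{(2k)} \to \operatorname{Sym}^{k}\ho(X)_{(1)}$. In the cases treated in Theorem~\ref{thm:cogeneration}, such inverses were produced by hand using geometric input specific to each family (the Mukai incidence correspondence for moduli of sheaves, the class $L\in \CH^2(F\times F)$ and Voisin's self-map for the Fano variety, the Deninger--Murre projectors for generalized Kummers). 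A deformation-invariant construction, perhaps using the Looijenga--Lunts--Verbitsky Lie algebra action on Chow groups or Markman's monodromy operators, would be the natural avenue to attack the general case, but is at present out of reach.
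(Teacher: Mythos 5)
Your proposal is correct and follows exactly the paper's own justification: the statement is a conjecture that the paper deduces from Conjecture~\ref{conj2:MBCK-bis} by combining Proposition~\ref{P:justify} with Proposition~\ref{P:delta-coradical}, and which holds unconditionally in cases \eqref{hilb}, \eqref{moduli}, \eqref{kum} and \eqref{fano} by Theorem~\ref{thm:main2}. Your identification of the remaining obstacle (establishing the strict grading in general) also matches the paper's framing.
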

In particular, by Theorem~\ref{thm:main2}, the hyper-K\"ahler varieties \eqref{hilb}, \eqref{moduli}, \eqref{kum} and \eqref{fano} satisfy the above conjecture.

\subsection{Splitting of the Voisin filtration}
Voisin~\cite{Voisin-coisotropic} conjectured that the filtration $S_\bullet$ on the Chow groups of zero-cycles on hyper-K\"ahler varieties is \emph{split} in the sense that it is opposite to the \emph{conjectural} Bloch--Beilinson filtration $F^{2\bullet}$. 
The following conjecture in particular provides an alternate notion for the \emph{splitting} of $S_\bullet$ that does not depend on the existence of the conjectural Bloch--Beilinson filtration\,: 

\begin{conj}\label{conj:Voisin-split-cogen2}
	The Voisin filtration $S_\bullet$ on the Chow group of zero-cycles on a hyper-K\"ahler variety $X$ is a
	 split $\delta$-filtration. 
\end{conj}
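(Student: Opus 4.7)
The plan is to reduce Conjecture~\ref{conj:Voisin-split-cogen2} to the conjunction of Conjectures~\ref{conj2:MBCK-bis} and~\ref{conj2:Voisin-split-cogen}; this reduction is unconditional in cases \eqref{hilb}, \eqref{moduli}, \eqref{kum} and \eqref{fano} thanks to Theorems~\ref{thm:main2} and~\ref{T:Voisin-filt}, which supply the two required ingredients there.

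Granting Conjecture~\ref{conj2:MBCK-bis}, fix a strict grading $\ho(X) = \ho(X)_{(0)}\oplus \cdots \oplus \ho(X)_{(n)}$ of the birational motive, together with the unit $o : \mathds 1 \stackrel{\sim}{\to} \ho(X)_{(0)}\hookrightarrow \ho(X)$ it determines. By Proposition~\ref{P:justify}, the decomposition
$$\CH_0(X) = \bigoplus_{k=0}^n \CH_0(X)_{(k)}, \qquad \CH_0(X)_{(k)} := \CH_0(\ho(X)_{(k)})$$
is a strict $\delta$-grading with $\CH_0(X)_{(0)} = \Q o$. The direct-sum property $\CH_0(X^m) = \bigoplus_l \CH_0(X^m)^\delta_{(l)}$ in condition (c) of Definition~\ref{D:delta-fil} follows because the tensor structure on birational motives transports the grading on $\ho(X)$ to a grading on $\ho(X^m) = \ho(X)^{\otimes m}$, and, working over the universal domain, the exterior product map $\CH_0(X)^{\otimes m}\to \CH_0(X^m)$ is surjective (see~\S\ref{SS:0cycle}), so that the homogeneous pieces $\CH_0(X^m)^\delta_{(l)}$ of~\eqref{eq:grading} are exactly $\CH_0(\ho(X^m)_{(l)})$, which are in direct sum.

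Next, Proposition~\ref{P:delta-coradical} identifies the $\delta$-filtration $G_\bullet$ associated to this strict $\delta$-grading with the co-radical filtration $R_\bullet \CH_0(X)$ attached to $o$. Granting Conjecture~\ref{conj2:Voisin-split-cogen}, one has $R_\bullet = S_\bullet$, so that $S_\bullet$ is the split $\delta$-filtration defined by the above $\delta$-grading, as required. In cases \eqref{hilb}--\eqref{fano}, Theorem~\ref{thm:main2} provides the needed strict grading and Theorem~\ref{T:Voisin-filt} the identification $S_\bullet = R_\bullet$, establishing the conjecture in those four families unconditionally.

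The main obstacle to a general proof is thus the construction of a strict grading on $\ho(X)$ for an arbitrary hyper-K\"ahler variety, i.e.\ Conjecture~\ref{conj2:MBCK-bis}: the arguments here lean crucially on geometric input specific to each family (Voisin's rational self-map in case \eqref{fano}, the incidence correspondence and the results of Marian--Zhao in case \eqref{moduli}, the Deninger--Murre decomposition in case \eqref{kum}) to produce and split the grading, and no substitute is currently available for a generic deformation class. An alternative route would be to define a splitting of $S_\bullet$ directly in terms of the constant-cycle subvarieties underlying the definition of $S_\bullet$ and to check $\delta$-compatibility by cycle-theoretic calculations on self-products, but turning this into a proof seems to require genuinely new input on the interplay between constant-cycle subvarieties and the diagonal embedding.
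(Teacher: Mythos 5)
Your proposal is correct and follows essentially the same route as the paper: the paper likewise reduces Conjecture~\ref{conj:Voisin-split-cogen2} to the existence of a strict grading on $\ho(X)$ via Proposition~\ref{P:justify}, and in cases \eqref{hilb}, \eqref{kum}, \eqref{fano} (and \eqref{moduli}) establishes it in Theorem~\ref{thm:splitting2} by invoking Theorem~\ref{thm:main2} together with the identification of $S_\bullet$ with the filtration induced by that grading from the proof of Theorem~\ref{thm:splitting}. Your small detour through $R_\bullet$ and Proposition~\ref{P:delta-coradical}, rather than comparing $S_\bullet$ directly with the graded filtration $G_\bullet$, is only a reshuffling of the same ingredients.
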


In fact, due to Conjecture~\ref{conj2:MBCK-bis} and Proposition~\ref{P:justify}, we would expect $S_\bullet$ to be the $\delta$-filtration associated to a strict $\delta$-grading. If this is the case for a hyper-K\"ahler variety $X$, then we would obtain by Proposition~\ref{P:delta-coradical} the coincidence of $S_\bullet$ and of the co-radical filtration $R_\bullet$ associated to the class of the unit spanning $S_0\CH_0(X)$, in other words we would obtain the validity of Conjecture~\ref{conj:Voisin-split-cogen} for $X$.
We note that even showing in general that the Voisin filtration~$S_\bullet$ is a $\delta$-filtration is a non-trivial matter.

\begin{thm}\label{thm:splitting2}
	Let $X$ be one of the hyper-K\"ahler varieties \eqref{hilb}, \eqref{kum} or \eqref{fano}.
	Then Conjecture~\ref{conj:Voisin-split-cogen2} holds for $X$, \emph{i.e.}, the Voisin filtration $S_\bullet$ on $\CH_0(X)$ defines a split $\delta$-filtration.
\end{thm}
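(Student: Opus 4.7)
The plan is to derive this statement purely by assembling results already proved in the paper. In each of the cases \eqref{hilb}, \eqref{kum}, \eqref{fano}, Theorem~\ref{thm:BMCK} provides a co-multiplicative birational Chow--K\"unneth decomposition $\{\varpi^X_{2i}\}_{0\le i\le n}$, which by Definition~\ref{D:MBCK} is precisely a unital grading
\[
\ho(X)=\ho(X)_{(0)}\oplus \ho(X)_{(1)}\oplus \cdots\oplus \ho(X)_{(n)}
\]
of the co-algebra object $\ho(X)$, with $\ho(X)_{(0)}\simeq \mathds 1$ and associated unit $o : \mathds 1\to \ho(X)$ a degree-$1$ zero-cycle on $X$ (Lemma~\ref{L:concrete-coMBCK}). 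Theorem~\ref{thm:cogeneration} upgrades this unital grading to a \emph{strict} grading in each of the three cases.

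I would then invoke Proposition~\ref{P:justify} to transfer this strict grading into a strict $\delta$-grading on $\CH_0(X)$,
\[
\CH_0(X)=\bigoplus_{k\ge 0}\CH_0(X)_{(k)},\qquad \CH_0(X)_{(k)}:=\CH_0(\ho(X)_{(k)}),
\]
with $\CH_0(X)_{(0)}=\Q o$. Let $G_\bullet\CH_0(X):=\bigoplus_{r\le k}\CH_0(X)_{(r)}$ denote the $\delta$-filtration associated to this strict $\delta$-grading; by construction it is split in the sense of Definition~\ref{D:delta-fil}.

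The two identifications that remain are formal. Proposition~\ref{P:delta-coradical}, applied to this strict $\delta$-grading with unit $o$, yields $G_k\CH_0(X)=R_k\CH_0(X)$ for all $k\ge 0$, where $R_\bullet$ is the co-radical filtration associated to $o$. On the other hand, Theorem~\ref{thm:splitting} (applied to this same $o$) gives $S_k\CH_0(X)=R_k\CH_0(X)$. Chaining these equalities produces $S_k\CH_0(X)=G_k\CH_0(X)$, and hence exhibits $S_\bullet$ as the $\delta$-filtration of an explicit strict $\delta$-grading, as required.

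Since all of the heavy lifting has been carried out in earlier sections, the only point requiring attention is bookkeeping: the unit $o$ arising from the idempotent $\varpi^X_0$ via Lemma~\ref{L:concrete-coMBCK}, the one used in Proposition~\ref{P:justify}, the one governing the co-radical filtration, and the Beauville--Voisin class appearing in Theorem~\ref{thm:splitting} must all be the same zero-cycle. This is automatic since Theorem~\ref{thm:splitting} was itself proved via the same co-multiplicative birational Chow--K\"unneth decomposition of Theorem~\ref{thm:BMCK}, so no additional argument is needed.
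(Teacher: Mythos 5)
Your proposal is correct and follows essentially the same route as the paper's proof: both reduce the statement, via Proposition~\ref{P:justify}, to the fact that $S_\bullet$ coincides with the ascending filtration induced by the strict grading of Theorems~\ref{thm:BMCK} and~\ref{thm:cogeneration}. Your extra detour through the co-radical filtration (using the statement of Theorem~\ref{thm:splitting} together with Proposition~\ref{P:delta-coradical}) is harmless but unnecessary, since the proof of Theorem~\ref{thm:splitting} already establishes $S_k\CH_0(X)=\big(\bigoplus_{i\leq k}\varpi^X_{2i}\big)_*\CH_0(X)$ directly, which is what the paper cites.
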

\begin{proof}
	More strongly, the Voisin filtration $S_\bullet$ is a $\delta$-filtration associated to a \emph{strict} $\delta$-grading. Indeed, it suffices by Proposition~\ref{P:justify} to show that $S_\bullet$ coincides with the ascending filtration induced by a strict grading on the birational motive $\ho(X)$. This was established in the proof of Theorem~\ref{thm:splitting}.
\end{proof}

\section{Motivic surface decomposition}\label{S:MSD}
Our aim is to study a motivic version of Voisin's surface
decomposition conjecture for hyper-K\"ahler varieties. 

\subsection{Voisin's surface decomposition}
We recall the following notion due to Voisin\,:

\begin{defn}[Surface decomposition~\cite{VoisinTriangle}]\label{def:Voisin}
	A  projective manifold $X$  of even dimension $2n$  is
	said to be
	\emph{surface decomposable} if there exist a projective smooth
	variety $\Gamma$, smooth projective surfaces $S_1,\ldots, S_n$ and generically finite morphisms
	$$\xymatrix{ \Gamma \ar[r]^\phi \ar[d]_\psi & X \\
		S_1\times \cdots \times S_n 
	}$$
	such that for any global $2$-form $\sigma \in \HH^0(X,\Omega^2_X)$ there exist global $2$-forms $\tau_i \in \HH^0(S_i,\Omega^2_{S_i})$ such that
	$$\phi^*\sigma = \psi^*\big(\sum_i p_i^*\tau_i \big).$$ 
	Here $p_i : S_1\times\cdots\times S_n \to S_i$ denote the natural projections.
\end{defn}

Based on the evidence provided by \cite[Thm.~3.3]{VoisinTriangle} (which includes cases \eqref{hilb}, \eqref{fano} and \eqref{llsvs}),
Voisin formulated\,:

\begin{conj}[Voisin \cite{VoisinTriangle}, Surface decomposability for
	hyper-K\"ahler varieties]\label{conj:voisin}
	Every hyper-K\"ahler variety is  surface
	decomposable.
\end{conj}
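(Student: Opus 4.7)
My plan is to reduce Voisin's surface decomposability conjecture to the cases already treated in \cite{VoisinTriangle} and in Theorem~\ref{thm:MSD} by a deformation argument inside the moduli of hyper-K\"ahler varieties, exploiting the locally complete nature of the families appearing there.

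First I would exploit the following observation: since $\HH^0(X,\Omega^2_X) = \C\sigma$ is one-dimensional, surface decomposability through fixed maps $\phi : \Gamma \to X$ and $\psi : \Gamma \to S_1\times\cdots\times S_n$ amounts to the single equality $\phi^*\sigma = \psi^*(\sum_i p_i^*\tau_i)$ in $\HH^0(\Gamma,\Omega^2_\Gamma)$ for some $\tau_i$. This is a closed condition on $\Gamma$ and the $S_i$, and becomes open-and-closed once one knows the map $\operatorname{Sym}^{\leq n}\HH^{2,0} \to \HH^{2k,0}$ stays surjective under deformation (which it does, for hyper-K\"ahler families, by Bogomolov). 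I would therefore attempt a \emph{deformation-invariance} statement: given a smooth proper family $\mathcal{X} \to B$ of hyper-K\"ahler manifolds with a surface-decomposable fiber $X_0$ via $(\Gamma_0,\phi_0,\psi_0,S_{1,0},\dots,S_{n,0})$, after suitable \'etale base change on $B$ one can deform the surfaces $S_{i,0}$ to families $\mathcal{S}_i \to B$ and the covering variety $\Gamma_0$, together with both morphisms, to the whole of $B$. The key inputs are unobstructedness of deformations of K3, abelian, or uniruled surfaces together with relative Hilbert-scheme arguments for the correspondence~$\Gamma$.

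Second, I would combine this deformation step with the density results recalled in the introduction: hyper-K\"ahler varieties of K3$^{[n]}$-, Kummer-, and OG10-type each admit dense subloci in their moduli consisting of manifolds of types \eqref{hilb}, \eqref{kum}, \eqref{fano}, or \eqref{llsvs}, for which Theorem~\ref{thm:MSD} (or~\cite[Thm.~3.3]{VoisinTriangle}) furnishes a surface decomposition. Combined with Step~1, this would settle surface decomposability on all fibers of each such deformation family.

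The main obstacles are two-fold. First, carrying out the deformation step requires controlling the geometry of the correspondence $\Gamma$ (and in particular keeping it connected, reduced, and projective) along all directions in moduli; the correspondences produced in Theorem~\ref{thm:MSD} are constructed via incidence varieties that may degenerate at boundary strata. Second, the sporadic deformation classes OG6 and OG10 are not covered by density from the four locally complete families above, and would need an independent treatment, presumably via their realizations as symplectic resolutions of singular moduli spaces on an abelian or K3 surface; pulling back a surface decomposition of the resolved ambient through the symplectic resolution requires controlling two-forms across the exceptional locus, which is delicate. These two difficulties together explain why the conjecture remains open, and tackling them (rather than the formal deformation-plus-density scheme) is where the real work lies.
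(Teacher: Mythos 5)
The statement you are asked to prove is stated in the paper as a \emph{conjecture} (attributed to Voisin), and the paper offers no proof of it: it only establishes special cases, namely surface decomposability for the families \eqref{hilb}, \eqref{kum}, \eqref{fano}, \eqref{llsvs} (quoting \cite[Thm.~3.3]{VoisinTriangle}) and the new cases of Corollary~\ref{C:SD}, obtained by first proving the stronger \emph{motivic} surface decomposability (Theorem~\ref{thm:MSD}) and then applying Proposition~\ref{prop:voisin}. So there is no ``paper proof'' to match your plan against; any complete argument you give would be a new result going beyond the paper.

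As a proof, your plan has genuine gaps that you yourself partly acknowledge, but the most serious one is structural rather than technical: the ``deformation plus density'' scheme cannot work in the form you describe. Surface decomposability of a fiber $X_b$ is an existence statement over that fiber (existence of $\Gamma$, of the surfaces $S_i$, and of the two generically finite morphisms), so the locus of decomposable fibers in the base $B$ is a priori only a countable union of constructible subsets, exactly as for Noether--Lefschetz-type loci; a dense such union need not be all of $B$, and density of the special families \eqref{hilb}--\eqref{kum}--\eqref{moduli} in moduli therefore gives nothing without a properness or closedness statement for the decomposable locus, which would require bounding the possible $(\Gamma, S_i, \phi, \psi)$ uniformly over the family --- no such boundedness is known. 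In the other direction, your ``deformation-invariance'' step is obstructed for the usual reason: the correspondence $\Gamma \subset X \times S_1 \times \cdots \times S_n$ determines algebraic (hence Hodge) classes on the product, and these classes have no reason to remain Hodge under deformation of $X$; the surfaces $S_i$ themselves are not canonically attached to $X$ and do not come with a family over $B$. Unobstructedness of deformations of K3 or abelian surfaces in the abstract is irrelevant here, since what must deform is the correspondence together with both projections, and the incidence varieties used in Theorem~\ref{thm:MSD} are constructed from moduli-theoretic data (supports of sheaves, triangles of lines) that is special to each family and degenerates away from it. Finally, as you note, OG6 and OG10 are not reached by any density argument from the four families. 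In short, the plan reproduces the known evidence but does not close, and the conjecture remains open.
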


\subsection{Motivic surface decomposition}
As will be spelled out in Proposition~\ref{prop:voisin} below, the following notion lifts the notion of Voisin's surface decomposition to rational equivalence.

\begin{defn}[Motivic surface decomposition]\label{D:MSD}
	A smooth projective variety $X$ of even dimension $2n$ over a field $K$ is
	said to be
	\emph{motivically surface decomposable} if there exist a projective
	variety $\Gamma$, smooth projective surfaces $S_1,\ldots, S_n$ and surjective morphisms
	$$\xymatrix{ \Gamma \ar[r]^\phi \ar[d]_\psi & X \\
		S_1\times \cdots \times S_n 
	}$$
	such that\,: 
	\begin{enumerate}[(i)]
		\item $\phi_*[p] = \phi_*[q]$  in $\CH_0(X_\Omega)$, for any two general
		points $p$ and $q$ in $\Gamma(\Omega)$ lying on the same fiber of $\psi$.
		\setcounter{1}{\value{enumi}}
	\end{enumerate} 
\end{defn}

\begin{rmk}\label{R:eq}
	Equivalently, 
	up to taking a linear section of $\Gamma$, 
	one can assume in Definition~\ref{D:MSD} that $\phi$
	and $\psi$ are generically finite. In that case,  $(i)$ is equivalent to (see the proof of Proposition~\ref{prop:coalg})\,:
	\begin{enumerate}[(i)]
		\setcounter{enumi}{\value{1}}
		\item  	$\phi_* \psi^*\psi_* \alpha = \deg(\psi)\, \phi_*\alpha$ in
		$\CH_0(X_\Omega)$, for any zero-cycle $\alpha \in \CH_0(\Gamma_\Omega)$.
		\setcounter{1}{\value{enumi}}
	\end{enumerate}
	In addition, as in Remark~\ref{R:biratGamma}, if resolution of singularities holds over $K$, up to desingularizing~$\Gamma$, we may assume that $\Gamma$ is smooth over $K$.
\end{rmk}

It is clear that the notion of motivic surface decomposability is stable under product and is a birational
invariant among smooth projective varieties.
Moreover, 	Proposition~\ref{prop:coalg}(c) shows that if $X$ has a motivic surface
decomposition as in Definition~\ref{D:MSD}, then the co-algebra structure on
$\ho(X)$ is determined by the co-algebra structure on $\ho(S_1\times
\cdots\times S_n)$ and hence by the co-algebra structures on the birational
motives $\ho(S_i)$, $1\leq i \leq n$.
The following proposition shows that the notion of ``motivic surface
decomposability''can be thought of as an analogue for rational equivalence of Voisin's notion of
``surface decomposability'' which is purely cohomological.

\begin{prop}\label{prop:voisin}
	Let $X$ be a smooth projective complex variety of even dimension.
	If $X$ admits a motivic surface decomposition (Definition~\ref{D:MSD}) where the surfaces $S_i$ have vanishing irregularity, then $X$ is surface decomposable (Definition~\ref{def:Voisin}).
\end{prop}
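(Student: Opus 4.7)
By Remark~\ref{R:eq}, we may assume that $\Gamma$ is smooth and that both $\phi$ and $\psi$ are generically finite, so that condition~(ii) of Definition~\ref{D:MSD} applies. Write $S := S_1\times\cdots\times S_n$ and consider the correspondence
\[
Z \ :=\ (\mathrm{id}_\Gamma \times \phi)_* \bigl(\psi^*\psi_* - \deg(\psi)\,\Delta_\Gamma\bigr) \ \in\ \CH^{2n}(\Gamma\times X)_\Q.
\]
Directly from condition~(ii) of Definition~\ref{D:MSD}, the pushforward action $Z_*:\CH_0(\Gamma_\Omega)\to \CH_0(X_\Omega)$ vanishes. The first step is therefore to invoke the Bloch--Srinivas principle (as used in the proof of Lemma~\ref{lem:zero}) to conclude that, with $\Q$-coefficients, $Z$ is supported on $D\times X$ for some divisor $D\subset \Gamma$.

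Next I would use the strictness of the Hodge filtration to conclude that the induced map $Z^*:\HH^0(X,\Omega^2_X)\to \HH^0(\Gamma,\Omega^2_\Gamma)$ is zero. Concretely, if $\iota:\widetilde D\to \Gamma$ is a desingularization and $\widetilde Z\in \CH^{2n-1}(\widetilde D\times X)$ is such that $(\iota\times \mathrm{id}_X)_*\widetilde Z = Z$, then by the projection formula $Z^*\sigma = \iota_*(\widetilde Z^{\,*}\sigma)$ with $\widetilde Z^{\,*}\sigma \in \HH^{1,-1}(\widetilde D)=0$, so $Z^*\sigma = 0$. Unpacking the definition of $Z$ via the identity (proved using the projection formula) $Z^*\sigma = (\psi^*\psi_* - \deg(\psi)\,\mathrm{id})(\phi^*\sigma)$, this yields
\[
\psi^*\psi_* \phi^*\sigma \ =\ \deg(\psi)\,\phi^*\sigma \quad \mbox{in } \HH^0(\Gamma,\Omega^2_\Gamma).
\]
Setting $\omega := \frac{1}{\deg(\psi)}\psi_*\phi^*\sigma \in \HH^0(S,\Omega^2_S)$, we obtain $\phi^*\sigma = \psi^*\omega$.

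It remains to descend $\omega$ to a sum of pullbacks from the factors, which is where the hypothesis $h^{1,0}(S_i)=0$ enters. By the Künneth formula in Hodge theory,
\[
\HH^0(S,\Omega^2_S) \ =\ \bigoplus_{\sum i_k = 2} \bigotimes_k \HH^{i_k,0}(S_k),
\]
and since all mixed terms involve some $\HH^{1,0}(S_k)=0$, this decomposition reduces to $\bigoplus_k p_k^*\HH^0(S_k,\Omega^2_{S_k})$. Writing $\omega = \sum_k p_k^*\tau_k$ accordingly then yields $\phi^*\sigma = \psi^*(\sum_k p_k^*\tau_k)$, which is precisely the surface decomposability of Definition~\ref{def:Voisin}.

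The only nontrivial step is the Bloch--Srinivas vanishing of $Z^*$ on holomorphic $2$-forms; the rest is formal manipulation of correspondences together with a Künneth-type computation permitted by the vanishing irregularity assumption. I expect no substantive obstacle beyond correctly tracking the bookkeeping of $(\mathrm{id}\times \phi)_*$ and the projection formula.
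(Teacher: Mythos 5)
Your proof is correct and follows essentially the same route as the paper's: reduce via Remark~\ref{R:eq} to $\Gamma$ smooth with $\phi,\psi$ generically finite, use Bloch--Srinivas to transfer the vanishing of $\phi_*\psi^*\psi_*-\deg(\psi)\,\phi_*$ on zero-cycles to the vanishing of its transpose on holomorphic $2$-forms, take $\tau=\frac{1}{\deg\psi}\,\psi_*\phi^*\sigma$, and invoke $q(S_i)=0$ to split $\tau$ into pullbacks from the factors. The only difference is that you spell out the Bloch--Srinivas step (support on $D\times X$ and the $\HH^{1,-1}(\widetilde D)=0$ argument) that the paper compresses into a single sentence.
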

\begin{proof} As mentioned in Remark~\ref{R:eq}, up to replacing $\Gamma$ with a desingularization of a linear section, we may assume that $X$ has a motivic surface decomposition as in Definition~\ref{D:MSD} with $\Gamma$ smooth over $K$ of dimension $2n=\dim X$.
	By the Bloch--Srinivas argument \cite{BS}, if $\phi_* \psi^*\psi_*  -
	\deg(\psi)\, \phi_*$ acts trivially on zero-cycles, then its transpose $\psi^*
	\psi_*\phi^*- \deg(\psi)\, \phi^*$ acts trivially on global $k$-forms for all
	$k\geq 0$. In particular, it acts trivially on $2$-forms. The latter is
	equivalent to saying that for any global $2$-form $\sigma \in
	\HH^0(X,\Omega^2_X)$ there exists a global $2$-form $\tau \in \HH^0(S_1\times
	\cdots \times S_n, \Omega^2_{S_1\times \cdots \times S_n})$ such that
	$\phi^*\sigma = \psi^*\tau$. Indeed, one simply takes $\tau = \frac{1}{\deg
		\psi}\,\psi_*\phi^*\sigma$. In case $q(S_i)=0$ for $1\leq i \leq n$,
	this is further equivalent to the
	existence of global $2$-forms $\tau_i \in \HH^0(S_i,\Omega^2_{S_i})$ such that
	$\phi^*\sigma = \psi^*(\sum_i p_i^*\tau_i)$, where $p_i : S_1 \times \cdots
	\times S_n \to S_i$ are the natural projections, which is the original
	formulation of Voisin~\cite{VoisinTriangle} as laid out in Definition~\ref{def:Voisin}.
\end{proof}

For the record, we have the following easy result.
\begin{prop}\label{prop:stable}
	Let $X$ and $Y$ be smooth projective varieties of same dimension $d$ over a field~$K$. Assume either one of the following\,:
	\begin{enumerate}[(i)]
		\item there is a dominant rational map $f:Y\dashrightarrow X$, or
		\item there exist a projective variety $\Gamma$  and surjective morphisms
		$$\xymatrix{ \Gamma \ar[r]^\phi \ar[d]_\psi & X \\
			Y
		}$$ 
		such that 	$\phi_*[p] = \phi_*[q]$  in $\CH_0(X_\Omega)$, for any two general
		points $p$ and $q$ in $\Gamma(\Omega)$ lying on the same fiber of $\psi$. 
	\end{enumerate}
	If $Y$ is motivically surface decomposable, then $X$ is motivically surface
	decomposable.
\end{prop}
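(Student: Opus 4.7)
The plan is to reduce case (i) to case (ii), and then to handle case (ii) by forming a fiber product of the two decompositions.

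Case (i): Given a dominant rational map $f: Y \dashrightarrow X$, let $\Gamma'$ be a resolution of the graph of $f$, with projections $\psi': \Gamma' \to Y$ (which is birational, hence surjective) and $\phi': \Gamma' \to X$ (dominant and, since $X, Y$ are projective of the same dimension, surjective). Because $\psi'$ is birational, the general fiber of $\psi'$ consists of a single point, so the condition in (ii) is trivially satisfied; case (i) becomes a special case of (ii).

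Case (ii): Let $Y$ admit a motivic surface decomposition with data $(\Gamma_Y, \phi_Y, \psi_Y, S_1, \ldots, S_n)$, where $2n = \dim Y$. By Remark~\ref{R:eq}, after replacing $\Gamma_Y$ and $\Gamma'$ by linear sections of appropriate dimension, we may assume $\phi_Y, \psi_Y, \phi', \psi'$ are all generically finite. Form the fiber product
\[
\tilde\Gamma := \Gamma_Y \times_Y \Gamma'
\]
(replacing it by an irreducible component that dominates both factors if necessary), with projections $\pi_1: \tilde\Gamma \to \Gamma_Y$ and $\pi_2: \tilde\Gamma \to \Gamma'$, and set $\phi_X := \phi' \circ \pi_2$ and $\psi_X := \psi_Y \circ \pi_1$. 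Then $\tilde\Gamma$ is projective and the maps $\phi_X, \psi_X$ are surjective, being compositions of surjective morphisms (recall that $\pi_1, \pi_2$ are base changes of $\psi', \phi_Y$ respectively). It remains to verify that for general $\tilde p, \tilde q \in \tilde\Gamma(\Omega)$ on the same fiber of $\psi_X$, we have $(\phi_X)_*[\tilde p] = (\phi_X)_*[\tilde q]$ in $\CH_0(X_\Omega)$.

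To check this, observe that $\pi_1(\tilde p)$ and $\pi_1(\tilde q)$ lie on the same fiber of $\psi_Y$, so by the surface decomposition of $Y$, we have $[y] = [y']$ in $\CH_0(Y_\Omega)$, where $y := \phi_Y(\pi_1(\tilde p))$ and $y' := \phi_Y(\pi_1(\tilde q))$. By the fiber product structure, $\pi_2(\tilde p)$ is a preimage of $y$ under $\psi'$ and $\pi_2(\tilde q)$ is a preimage of $y'$. Now apply the correspondence $\gamma' := \frac{1}{\deg(\psi')}(\phi')_*(\psi')^*: \ho(Y) \to \ho(X)$ provided by Proposition~\ref{prop:coalg}: for a general point $y_0 \in Y_\Omega$, the pullback $(\psi')^*[y_0]$ is the sum of the classes of the preimages of $y_0$ under $\psi'$, and since those preimages all have the same class in $\CH_0(X_\Omega)$ under $(\phi')_*$ by hypothesis, we obtain $\gamma'_*[y_0] = [\phi'(p'_1)]$ for any such preimage $p'_1$. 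Consequently $\gamma'_*[y] = (\phi_X)_*[\tilde p]$ and $\gamma'_*[y'] = (\phi_X)_*[\tilde q]$, and the equality $[y] = [y']$ in $\CH_0(Y_\Omega)$ yields the desired $(\phi_X)_*[\tilde p] = (\phi_X)_*[\tilde q]$. The only potentially delicate point, namely the identification $\gamma'_*[y_0] = [\phi'(p'_1)]$, requires the generic finiteness of $\psi'$, which is why Remark~\ref{R:eq} was invoked at the outset.
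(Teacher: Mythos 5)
Your proof is correct and follows essentially the same route as the paper's: after reducing to generically finite morphisms via Remark~\ref{R:eq}, one forms the fiber product over $Y$ of the given correspondence with the surface decomposition of $Y$, with your case (i) simply folded into case (ii) via the graph of $f$ rather than handled by direct composition. If anything you make explicit the one step the paper leaves implicit, namely that the hypothesis on $(\Gamma',\phi',\psi')$ lets $(\phi')_*$ descend, through the correspondence $\frac{1}{\deg\psi'}\,(\phi')_*(\psi')^*$ of Proposition~\ref{prop:coalg}, to a map factoring through rational equivalence on $Y$, which is exactly what is needed to compare points lying in the same fiber of $\Psi$ but in different fibers of $\psi_{\Gamma'}$.
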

\begin{proof}
	More generally, suppose $Y$ has the following property\,: there exist  a smooth projective variety $Z$ of dimension $d$ and  a projective variety $\Gamma'$  with surjective morphisms
	$\phi' : \Gamma' \to Y$ and $\psi' : \Gamma' \to Z$
	such that 
	$\phi'_*[p] = \phi'_*[q]$  in $\CH_0(Y_\Omega)$, for any two general
	points $p$ and $q$ in $\Gamma'(\Omega)$ lying on the same fiber of $\psi'$. 
	
	In case $(i)$, if
	$\pi : \widetilde{\Gamma}\to \Gamma'$ denotes a resolution of $f\circ \phi' : \Gamma' \dashrightarrow X$, then as in Remark~\ref{R:biratGamma} we note that $\Phi := f\circ \phi\circ \pi : \widetilde \Gamma \to X$ and $\Psi := \psi \circ \pi : \widetilde \Gamma \to Z$ are such that  $\Phi_*[p] = \Phi_*[q]$  in $\CH_0(X_\Omega)$, for any two general
	points $p$ and $q$ in $\Gamma(\Omega)$ lying on the same fiber of $\Psi$.
	
	In case $(ii)$, we form the cartesian square 
	$$\xymatrix{\Gamma\times_Y \Gamma' \ar[r]^{\quad \phi_{\Gamma}} \ar[d]_{\psi_{\Gamma'}} & \Gamma \ar[d]^\psi \ar[r]^{\phi} & X \\
		\Gamma' \ar[r]^{\phi'} \ar[d]_{\psi'} & Y  \\  Z.
	}$$ Since $\phi_{\Gamma}$ maps fibers of $\psi_{\Gamma'}$ to fibers of $\psi$, we have that $\Phi :=  \phi\circ \phi_{\Gamma}$ and $\Psi := \psi' \circ \psi_{\Gamma'}$ are such that  $\Phi_*[p] = \Phi_*[q]$  in $\CH_0(X_\Omega)$, for any two general
	points $p$ and $q$ in $(\Gamma\times_Y\Gamma')(\Omega)$ lying on the same fiber of $\Psi$.
\end{proof}

In view of Proposition~\ref{prop:voisin},
we ask whether
Voisin's Conjecture~\ref{conj:voisin} admits an analogue modulo rational equivalence\,:

\begin{conj}[Motivic surface decomposability for hyper-K\"ahler
	varieties]\label{conj:MSD}
	Let $X$ be a hyper-K\"ahler variety of dimension $2n$. Then $X$ is motivically
	surface decomposable, in the sense of Definition~\ref{D:MSD}.
\end{conj}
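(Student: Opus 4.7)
My plan is to prove Conjecture~\ref{conj:MSD} in the specific cases \eqref{hilb}, \eqref{moduli}, \eqref{kum}, \eqref{fano}, and \eqref{llsvs}; a uniform proof for all hyper-K\"ahler varieties is out of reach since there is no known general geometric construction of covers by surfaces. The strategy rests on Proposition~\ref{prop:stable} (stability of the notion under dominant rational maps and under ``common correspondence'' covers with the constant-on-fibers property), combined with the birational invariance and multiplicativity in products of motivic surface decomposability.

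For case \eqref{hilb}, I would take $\Gamma$ to be a resolution of the birational map $S^n \dashrightarrow \operatorname{Hilb}^n(S)$ factoring through the symmetric product, with $\phi$ the induced morphism to $\operatorname{Hilb}^n(S)$ and $\psi$ the projection to $S^n = S \times \cdots \times S$. Since $\psi$ is birational, its fibers are generically points and condition (i) is automatic. For case \eqref{kum} the same device works using the birational identification $K_n(A) \sim A^n/\mathfrak{S}_{n+1}$ exploited in the proof of Theorem~\ref{thm:BMCK}\eqref{kum}, taking $\Gamma$ to be a resolution of $A^n \dashrightarrow K_n(A)$ and $\psi:\Gamma \to A^n$. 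Case \eqref{moduli} then follows from case \eqref{hilb}: the proof of Theorem~\ref{thm:moduli} furnishes a component $R_0$ of the incidence variety that is generically finite and dominant over both $\operatorname{M}_\sigma(v)$ and $\operatorname{Hilb}^n(S)$, with the key ``constant-on-fibers'' property coming from~\cite{MZ}, so Proposition~\ref{prop:stable}(ii) transfers the surface decomposition across. Finally, case \eqref{llsvs} reduces to case \eqref{fano}: Voisin's dominant rational map $F\times F \dashrightarrow Z$ combined with Proposition~\ref{prop:stable}(i) and multiplicativity in products does the job.

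The hard part is case \eqref{fano}. The plan is to lift Voisin's cohomological surface decomposition of the Fano variety~\cite{VoisinTriangle} to rational equivalence. Concretely, one uses Voisin's rational self-map $\varphi: F \dashrightarrow F$ together with the surface $S_l$ of lines meeting a given line $l \in F$ to construct a correspondence $\Gamma$ surjecting on the one hand to $F$ and on the other hand to $B \times B$ for a suitable surface $B$, with the property that points in a general fiber of $\psi$ are rationally equivalent in $F$. The verification of condition (i) is the technical crux; it amounts to showing that the correspondence cuts out families of lines whose members all agree in $\CH_0(F)$, which should follow from the eigenspace description of $\CH_0(F)$ and the explicit constant-cycle properties of lines of second type and triangles recorded in Remark~\ref{R:fano}. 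The Bloch--Srinivas principle together with the generalized Franchetta conjecture for $F\times F$ (already invoked in the proof of Theorem~\ref{thm:cogeneration}\eqref{fano}) should provide the cycle-theoretic control needed to pass from cohomological to rational-equivalence decomposability.
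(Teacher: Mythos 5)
Your treatment of cases \eqref{hilb}, \eqref{moduli}, \eqref{kum} and \eqref{llsvs} matches the paper's proof of Theorem~\ref{thm:MSD} essentially verbatim: the symmetric-product birational models, the incidence variety $R_0$ from Theorem~\ref{thm:moduli} fed into Proposition~\ref{prop:stable}(ii), and Voisin's degree-6 map $F\times F\dashrightarrow Z$ are exactly the devices used there. The problem is case \eqref{fano}, which you correctly identify as the crux but for which you do not actually give an argument, and the tools you propose would not suffice. First, Bloch--Srinivas runs in the wrong direction for your purpose: it lets one deduce a statement about global forms from a statement about zero-cycles (this is precisely how Proposition~\ref{prop:voisin} passes from motivic to cohomological surface decomposability), but it cannot ``lift'' Voisin's cohomological decomposition to rational equivalence. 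Second, you never construct the correspondence $\Gamma$ dominating both $F$ and $B\times B$, nor do you explain why a general fiber of $\psi$ would be a constant-cycle subvariety of $F$; invoking the eigenspace decomposition of $\CH_0(F)$ and the generalized Franchetta conjecture does not by itself produce such a $\Gamma$.

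What the paper actually does in case \eqref{fano} is quite different and is worth recording. It identifies $F(Y)$ with a moduli space $M=\operatorname{M}_\sigma(\lambda_1+\lambda_2)$ of Bridgeland-stable objects $P_l$ in the Kuznetsov component $\mathcal A_Y$, takes $B$ to be the base of a uniruled divisor on $F(Y)$, and considers the Shen--Yin incidence $R\subset M\times B^{(2)}$ cut out by the condition $c_3(\mathcal E)=[P]_*j_*^{(2)}[\xi]+c\,[l_0]$ in $\CH_1(Y)$. The constant-on-fibers property (i) comes from the Shen--Yin result that $[\mathcal E_1]=[\mathcal E_2]$ in $\CH_0(M)$ iff $c_3(\mathcal E_1)=c_3(\mathcal E_2)$ in $\CH_1(Y)$, and the genuinely new input is the dominance of $p_M:R\to M$, proved via the distinguished triangle $\mathcal O_Y(-1)[1]\to P_l\to \mathcal I_l$, which gives $c_3(P_l)=c_3(\mathcal I_l)+3[l_0]$, combined with Shen--Yin's theorem that $c_3(\mathcal I_l)\in S_2^{SY}(Y)$. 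None of this appears in your sketch, so case \eqref{fano} (and hence also \eqref{llsvs}, which you reduce to it) remains unproved as written.
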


The main result of this section provides evidence for Conjecture~\ref{conj:MSD}\,:

\begin{thm}\label{thm:MSD}
	The hyper-K\"ahler varieties \eqref{hilb}, \eqref{moduli}, \eqref{kum}, \eqref{fano} and \eqref{llsvs} are motivically surface decomposable. Moreover, for a fixed hyper-K\"ahler variety as in  \eqref{hilb}, \eqref{moduli}, \eqref{kum}, \eqref{fano} or \eqref{llsvs}, one may choose the surfaces $S_1,\ldots,S_n$ as in Definition~\ref{D:MSD} to be the same.
\end{thm}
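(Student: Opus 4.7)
The plan is case-by-case, exploiting the stability of motivic surface decomposability under birational equivalence and under products, as well as Proposition~\ref{prop:stable} for transferring the property via correspondences. For case \eqref{hilb}, since $\mathrm{Hilb}^n(S)$ is birational to $S^{(n)} = S^n/\mathfrak{S}_n$, I would take $\Gamma$ to be the component dominating both factors of the fiber product $S^n \times_{S^{(n)}} \mathrm{Hilb}^n(S)$, with $\phi:\Gamma \to \mathrm{Hilb}^n(S)$ and $\psi:\Gamma \to S^n$ the two projections. The map $\psi$ is birational, being the base change along the finite quotient $S^n \to S^{(n)}$ of the birational Hilbert--Chow morphism; so a general fiber of $\psi$ is a singleton, trivially verifying condition~(i) of Definition~\ref{D:MSD}. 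Take $S_i = S$ for every~$i$.

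Case \eqref{kum} is handled by the same construction: as recalled in the proof of Theorem~\ref{thm:BMCK}\eqref{kum}, $K_n(A)$ is birational to $A^n/\mathfrak{S}_{n+1}$, so I would set $\Gamma$ to be the appropriate component of $A^n \times_{A^n/\mathfrak{S}_{n+1}} K_n(A)$ and take all $S_i = A$. Case \eqref{moduli} follows from \eqref{hilb} via Proposition~\ref{prop:stable}(ii): the proof of Theorem~\ref{thm:moduli} exhibits a component $R_0$ of the incidence variety $R \subset \operatorname{M}_\sigma(v) \times \mathrm{Hilb}^n(S)$ that is generically finite over both factors and such that general points in the same fiber of either projection have equal class in the Chow group of zero-cycles of the other factor; this is exactly the hypothesis of Proposition~\ref{prop:stable}(ii), with surfaces again all equal to $S$.

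Case \eqref{fano} is the technical heart. Following Voisin \cite{Voisin-coisotropic, VoisinTriangle}, I would pick a surface $\Sigma$ with $q(\Sigma)=0$ parametrizing a uniruled divisor of $F$, and build $\Gamma$ using the triangle correspondence on the cubic fourfold $Y$: to a line $l \in F$, associate the pair $(l_1, l_2)$ of the two other lines cut out on $Y$ by a plane containing $l$. The map $\phi:\Gamma \to F$ remembers $l$, while $\psi:\Gamma \to \Sigma\times \Sigma$ remembers $(l_1, l_2)$. To verify condition~(i), one needs to show that two general pre-images in $\Gamma$ of a point of $\Sigma\times \Sigma$ have the same class in $\CH_0(F)$; this is carried out by combining the eigenspace decomposition $\CH_0(F) = \CH_0(F)_{(0)} \oplus \CH_0(F)_{(2)} \oplus \CH_0(F)_{(4)}$ from Theorem~\ref{thm:BMCK}\eqref{fano}, the description of these pieces in Remark~\ref{R:fano}, and the relations produced by Voisin's rational self-map $\varphi: F \dashrightarrow F$. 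Take $S_1 = S_2 = \Sigma$.

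Case \eqref{llsvs} then reduces to \eqref{fano}: Voisin's dominant rational map $F\times F \dashrightarrow Z$ from \cite[Prop.~4.8]{Voisin-coisotropic}, combined with Proposition~\ref{prop:stable}(i) and the product stability of motivic surface decomposability (reusing the same surface in each factor), yields the claim with the common surface $\Sigma$ used four times. The main obstacle is case \eqref{fano}: while \eqref{hilb} and \eqref{kum} reduce to straightforward finite-quotient constructions and \eqref{moduli} and \eqref{llsvs} are formal consequences of the other cases via Proposition~\ref{prop:stable}, the construction of $\Gamma$ in \eqref{fano} demands geometric input from the triangle correspondence on $Y$ and a careful Chow-theoretic analysis of fibers of $\psi$ using the $\varphi$-eigenspace decomposition.
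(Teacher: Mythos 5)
Your cases \eqref{hilb}, \eqref{kum}, \eqref{moduli} and \eqref{llsvs} are correct and coincide with the paper's argument: the first two reduce to the finite quotients $S^{(n)}$ and $A^n/\mathfrak{S}_{n+1}$ via Proposition~\ref{prop:stable}, case \eqref{moduli} transfers the property from $\mathrm{Hilb}^n(S)$ through the incidence $R_0$ of Theorem~\ref{thm:moduli}, and case \eqref{llsvs} follows from \eqref{fano} via Voisin's dominant rational map $F\times F\dashrightarrow Z$.

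The gap is in case \eqref{fano}, which you correctly single out as the technical heart but do not prove. Your correspondence is not well defined: for a general line $l$ and a general plane $\Pi\supset l$ with $Y\cap\Pi=l+l_1+l_2$, the residual lines $l_1,l_2$ are general points of $F$, not points of a fixed surface $\Sigma$, so there is no map $\psi:\Gamma\to\Sigma\times\Sigma$ ``remembering $(l_1,l_2)$''; moreover the triangle variety $\{(l,\Pi)\}$ is $6$-dimensional (a $2$-dimensional family of triangles passes through a general line), whereas Definition~\ref{D:MSD} with Remark~\ref{R:eq} requires $\Gamma$ generically finite over both $F$ and $S_1\times S_2$, each of dimension~$4$. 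More seriously, the verification of condition~(i) --- that the $\phi$-images of general points in a fiber of $\psi$ are rationally equivalent --- is the entire content of the theorem here, and ``combining the eigenspace decomposition with relations produced by $\varphi$'' does not supply it. The paper instead realizes $F(Y)$ as a moduli space of stable objects $P_l$ in the Kuznetsov component $\mathcal A_Y$, takes $B$ the base of a uniruled divisor, and uses the Shen--Yin incidence $R\subset M\times B^{(2)}$ cut out by $c_3(\mathcal E)=[P]_*j^{(2)}_*[\xi]+c\,[l_0]$ in $\CH_1(Y)$: the constant-cycle property of the fibers of $p_{B^{(2)}}$ is exactly \cite[Prop.~3.4]{SY} (rational equivalence on $M$ is detected by $c_3$), the existence of a component generically finite over both factors is \cite[Prop.~3.5]{SY}, and the dominance of $p_M$ is proved by computing $c_3(P_l)=c_3(\mathcal I_l)+3[l_0]$ from the defining triangle of $P_l$ and invoking \cite[Thm.~0.4]{SY}. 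None of these inputs appears in your sketch. (A triangle-based construction can be salvaged by additionally constraining $l_1,l_2$ to lie on a uniruled divisor over $B$ and using the relation $[S_{l_1}]\cdot[S_{l_2}]=6[o]+[l_3]-[l_1]-[l_2]$ of \cite[Prop.~20.7]{SV} to see that the class of the residual line is determined by the classes of $l_1$ and $l_2$; but the dominance of the resulting correspondence over $F$ is precisely where the Shen--Yin input is still needed.)
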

\begin{proof}
	Case \eqref{hilb}. Obviously, $S^n$ has a motivic surface decomposition, and we apply Proposition~\ref{prop:stable} to the dominant rational map $f : S^n \dashrightarrow \mathrm{Hilb}^n(S)$ which is the composition of the quotient morphism $S^n \to
	S^{(n)}:=S^n/\mathfrak{S}_n$ with the inverse of the (birational) Hilbert--Chow
	morphism $ \mathrm{Hilb}^n(S) \to S^{(n)}$. \medskip
	
	Case \eqref{moduli}. This reduces to the case~\eqref{hilb} via Proposition~\ref{prop:stable}.
	Indeed, as in the proof of Theorem~\ref{thm:moduli}, 
	we have generically finite and surjective morphisms 
	$$\xymatrix{ R_0
		\ar[d]_{p_{\mathrm{Hilb}^n(S)}} \ar[r]^{p_{\operatorname{M}_\sigma(v)}} &
		\operatorname{M}_\sigma(v) \\
		\mathrm{Hilb}^n(S),
	}$$
	such that all points on the same fiber of $p_{\mathrm{Hilb}^n(S)}$ have same class in $\CH_0(\operatorname{M}_\sigma(v))$.
	\medskip

	Case \eqref{kum}. 	Recall that the $n$-th generalized Kummer variety $K_n(A)$ associated
	to an abelian surface $A$ is a fiber of the isotrivial fibration $\mathrm{Hilb}^{n+1}(A) \to A$
	that is the composite of the Hilbert--Chow morphism $\mathrm{Hilb}^{n+1}(A) \to
	A^{n+1}/\mathfrak{S}_{n+1}$ with the sum morphism $\Sigma :
	A^{n+1}/\mathfrak{S}_{n+1} \to A$. 
	The restriction of the Hilbert--Chow morphism provides
	a
	birational morphism from  $K_n(A)$ to the variety $A_0^{n+1} /
	\mathfrak{S}_{n+1}$, where $A_0^{n+1}$ is the fiber over $0$ of the sum
	morphism
	$\Sigma : A^{n+1} \to A$ and the action of the symmetric group
	$\mathfrak{S}_{n+1}$ is the one induced from the action on $A^{n+1}$ permuting
	the factors.  We thereby obtain a dominant rational map $A^n \dashrightarrow K_n(A)$ and we may conclude with Proposition~\ref{prop:stable}.
	\medskip
	
	Case \eqref{fano}.
	Let $Y$ be a smooth cubic fourfold and let $\mathcal A_Y$ be its Kuznetsov component, \emph{i.e.}, $$\mathcal A_Y := \{\mathcal E\in D^b(Y) \ \big\vert \ \operatorname{Ext}^*_{D^b(Y)} (\mathcal O_Y(i),\mathcal E) \ \mbox{for} \ i=0,1,2	\}.$$
	In other words, we have a semi-orthogonal decomposition $D^b(Y) = \langle \mathcal A_Y, \mathcal{O}_Y, \mathcal{O}_Y(1), \mathcal{O}_Y(2)\rangle$.

	Let $D\subset F(Y)$ be a uniruled divisor over a surface $B$, 
	$$\xymatrix{  D \ar@{^(->}[r]^{j\quad}  \ar@{-->}[d]_q & F(Y). \\ B
	}$$
	Such a divisor is provided for instance by \cite{CMP}\,; explicit examples are also given in \cite{Voisin-Intrinsic} and \cite[Lem.~1.8]{SY}.
	The rational map $q$ induces an isomorphism $q_* : \CH_0(D) \stackrel{\sim}{\longrightarrow} \CH_0(B)$, and for $k>0$ the embedding $j$ induces a morphism $j^{(k)}_* : \CH_0(B^{(k)}) \to \CH_0(F(Y))$, where $B^{(k)}$ denotes the $k$-th symmetric power of $B$.		  
	We consider now a moduli space $M$ of stable objects on $\mathcal{A}_Y$ and denote $2n$ its dimension.
	Following~\cite[\S 3.2]{SY}, we consider the incidence
	$$R:= \{(\mathcal{E},\xi) \in M \times B^{(n)} \ \big\vert \
	c_3(\mathcal{E}) = [P]_*j_*^{(n)}[\xi] +c\, [l_0] \ \mbox{in}\ \CH_1(Y)\}$$ together with the two natural projections $p_{M} : R \to M$ and $p_{B^{(n)}} : R \to B^{(n)}$. Here, $P := \{ (l,y) \in F(Y)\times Y : y\in l\}$ is the cylinder correspondence and $l_0$ is any line on $Y$ with class $\frac{1}{3}[c_1(\mathcal O_Y(1))]^3$.
	On the one hand, by \cite[Prop.~3.4]{SY}, two objects $\mathcal E_1$ and $\mathcal E_2$ satisfy $[\mathcal E_1] = [\mathcal E_2] \ \mbox{in}\ \CH_0(M)$ if and only if $c_3(\mathcal E_1) = c_3(\mathcal E_2) \ \mbox{in}\ \CH_1(Y)$\,; in particular, all points on the same fiber of $p_{B^{(n)}}$ have the same class in 
	$\CH_0(M)$. 	
	On the other hand, by \cite[Prop.~3.5]{SY} (which holds for the base of any uniruled divisor in $F(Y)$), assuming $p_{M} : R \to M$ is dominant, there is a component $R_0\subseteq R$ such that both projections $p_{M} : R \to M$ and $p_{B^{(n)}} : R \to B^{(n)}$ restricted to $R_0$ are dominant and generically finite. Combining both facts above establishes that $M$ is motivically surface decomposable, provided $p_M : R\to M$ is dominant. 
	\medskip
	
	In order to establish \eqref{fano}, it thus suffices to obtain a modular interpretation $M$ of $F(Y)$ and to show that the corresponding map $p_M : R \to M$ is dominant.
	
	First, we recall how the Fano variety $F(Y)$ of lines on $Y$ can be viewed as a moduli of stable objects in $\mathcal A_Y$.
	Let $l$ be a line on $Y$. Denote $\mathcal I_l$ the ideal sheaf of $l$ in~$Y$ and consider the stability condition on coherent sheaves on $Y$ induced by the projective embedding $Y\subset \PP^5$. 
	Following  \cite[\S2.3]{MS}, it was observed in \cite{KuMa} that the stable coherent sheaf $\mathcal F_l:= \ker(\mathcal O_Y^{\oplus 4} \longrightarrow \mathcal I_l(1))$ belongs to $ \mathcal A_Y$.
	Moreover, in \cite[Prop.~5.5]{KuMa}, $F(Y)$ is identified with the connected component of the moduli space of stable sheaves containing the objects $\mathcal F_l$ for any line $l \subset Y$.
	Now define 
	$$P_l := \operatorname{cone}\big(\mathrm{ev}^\vee : \mathcal F_l(-1) \longrightarrow \mathrm{RHom}(\mathcal F_l(-1) , \mathcal{O}_Y(-1))^\vee \otimes \mathcal O_Y(-1)
	\big)[-1].$$
	The object $P_l$ still belongs to $\mathcal A_Y$, and
	in $\operatorname{D}^b(Y)$ we have a distinguished triangle
	\begin{equation}\label{E:disttriangle}
	\xymatrix{\mathcal O_Y(-1)[1] \ar[r] & P_l \ar[r] & \mathcal{I}_l.}
	\end{equation}	
	Moreover, as explained in \cite[\S2.3]{MS}, 
	the Fano variety $F(Y)$ of lines on $Y$ identifies with the moduli space of the objects $P_l \in \mathcal A_Y$.
	By \cite[Thm.~1.1]{PLZ}, the objects $P_l$ are stable (with respect to a Bridgeland stability condition) with Mukai vector $\lambda_1+\lambda_2$ (with $\lambda_1$ and $\lambda_2$ as defined \emph{e.g.}~in \cite[\S 2.2]{PLZ}) and $F(Y)$ identifies with the moduli space $M:= \operatorname{M}_\sigma(\lambda_1+\lambda_2)$. Consequently, the points
	of $M$ are given by the stable objects $P_l \in \mathcal A_Y$ for varying $l \in F(Y)$.

	Second, we conclude by showing that $p_{M} : R \to M$ is dominant. 
	For that purpose, it is sufficient to show that for all lines $l\subset Y$, we have $c_3(P_l) \in S^{SY}_2(Y)$, where $S^{SY}_\bullet(Y)$ is the ascending filtration of Shen--Yin~\cite{SY} on $\CH_1(Y)$ defined by 
	$$S^{SY}_k(Y) := \{ [P]_*j_*^{(k)}[\xi] +\Z \, [l_0] : \xi \in B^{(k)}\} \subseteq \CH_1(Y).$$ 
	(Note that $S^{SY}_\bullet$ does not depend on the choice of uniruled divisor by \cite[Lem.~1.1]{SY}.)
	Due to~\eqref{E:disttriangle}, we have the following identity involving total Chern classes 
	$$c(P_l) = c(\mathcal I_l)\cdot c(\mathcal O_Y(-1))^{-1} \ \mbox{in}\ \CH^*(Y).$$ 
	Since the ideal sheaf $\mathcal I_l$ is supported on $l$, we have $c_i(\mathcal I_l) = 0$ for $i<3$. It follows that $$c_3(P_l) = c_3(\mathcal I_l) + 3[l_0] \ \mbox{in}\ \CH^3(Y).$$ However, by \cite[Thm.~0.4]{SY}, we have $c_3(\mathcal I_l) \in S^{SY}_2(Y)$,  and we conclude that  $c_3(P_l) \in S^{SY}_2(Y)$, as desired.\medskip

	Case \eqref{llsvs}.  Let $Y$ be a smooth cubic fourfold not containing a plane, let $F$ be its Fano variety of lines and let $Z$ be the associated LLSvS eightfold. By considering $\mu : F\times F \dashrightarrow Z$  the dominant
	rational map of degree~6 constructed by
	Voisin~\cite[Prop.~4.8]{Voisin-coisotropic}, we obtain thanks to case \eqref{fano} and Proposition~\ref{prop:stable} a motivic surface decomposition for $Z$.
\end{proof}

\begin{rmk}\label{R:anysurface}
	We note that, \emph{for any} surface $B$ obtained as the (desingularization of the) base of a uniruled divisor on the Fano variety $F(Y)$ of lines on a smooth cubic fourfold $Y$, we obtain a motivic surface decomposition for $F(Y)$ in terms of $B\times B$. In particular, by Proposition~\ref{prop:voisin}, we obtain a surface decomposition in cases~\eqref{fano} and~\eqref{llsvs} for any surface $B$ with vanishing irregularity obtained as the (desingularization of the) base of a uniruled divisor on $F(Y)$.
	This should be compared to \cite[Thm.~3.3(1)]{VoisinTriangle}, where the surface involved in the surface decompositions of~\eqref{fano} and~\eqref{llsvs} is the surface $B = \Sigma_2$ of lines of second type on~$Y$ (whose irregularity vanishes by \cite[Thm.~D]{GK}).
\end{rmk}

\begin{rmk}[Moduli spaces of stable objects on $\mathcal A_Y$] \label{R:modulicubic}
	Let $M$ be a moduli space of stable objects on $\mathcal A_Y$ and let $2n$ be its dimension. The proof of Theorem~\ref{thm:MSD}\eqref{fano} shows that $M$ is motivically surface decomposable if $p_M : R \to M$ is dominant, or equivalently if for any object $\mathcal E \in M$ we have $c_3(\mathcal E) \in S^{SY}_n(Y)$. The latter is precisely \cite[Conj.~0.3]{SY}.
\end{rmk}

As an application of the above, we can complete the list of
\cite[Thm.~3.3]{VoisinTriangle}\,:

\begin{cor}\label{C:SD}
	\begin{enumerate}[(a)]
		\item 		Moduli of stable objects on K3 surfaces are surface decomposable.
		\item Let $M$ be a moduli of stable objects on the Kuznetsov component $\mathcal A_Y$ of a smooth cubic fourfold~$Y$. If $M$ satisfies \cite[Conj.~0.3]{SY}, then $M$ admits a surface decomposition and the surfaces involved can be chosen to be pairwise equal and to be equal to any surface~$B$ with vanishing irregularity obtained as the base of a uniruled divisor on the Fano variety $F(Y)$.
	\end{enumerate}
\end{cor}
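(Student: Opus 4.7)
The plan is to invoke Theorem~\ref{thm:MSD}, which provides motivic surface decompositions in cases \eqref{moduli} and (under the Shen--Yin conjecture, see Remark~\ref{R:modulicubic}) for moduli spaces on the Kuznetsov component $\mathcal A_Y$, and then upgrade these motivic decompositions to genuine Voisin surface decompositions via Proposition~\ref{prop:voisin}. The latter requires only that the surfaces $S_1,\ldots,S_n$ appearing in the motivic decomposition have vanishing irregularity.

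For part (a), Theorem~\ref{thm:MSD}\eqref{moduli} already asserts that one may take the $n$ surfaces to be pairwise equal; inspecting its proof, one sees that they may all be chosen to be the K3 surface $S$ on which the stable objects live (the chain of arguments reduced \eqref{moduli} to \eqref{hilb}, which in turn reduced to $S^n$). Since $q(S)=0$ for any K3 surface, Proposition~\ref{prop:voisin} immediately gives a Voisin surface decomposition of $\mathrm M_\sigma(v)$ with all surfaces equal to $S$.

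For part (b), let $M$ be a moduli of stable objects on $\mathcal A_Y$ satisfying \cite[Conj.~0.3]{SY}. By Remark~\ref{R:modulicubic}, the condition $c_3(\mathcal E)\in S^{SY}_n(Y)$ for every $\mathcal E\in M$ ensures dominance of $p_M\colon R\to M$, so the construction of Theorem~\ref{thm:MSD}\eqref{fano} applies verbatim: for any uniruled divisor $D\subset F(Y)$ over a base surface $B$, the incidence $R\subseteq M\times B^{(n)}$ yields a motivic surface decomposition of $M$ with all surfaces equal to $B$. If furthermore $q(B)=0$, Proposition~\ref{prop:voisin} upgrades this to a Voisin surface decomposition with the $n$ surfaces pairwise equal to $B$.

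No serious obstacle is expected here: both statements are essentially book-keeping applications of Theorem~\ref{thm:MSD}, Remark~\ref{R:modulicubic} and Proposition~\ref{prop:voisin}. The only mildly delicate point is checking that the proof of Theorem~\ref{thm:MSD}\eqref{fano} really produces surfaces all equal to a given $B$ (rather than a product of distinct surfaces), but this is exactly how the incidence variety $R\subseteq M\times B^{(n)}$ is set up, so the verification is immediate.
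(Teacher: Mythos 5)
Your proposal is correct and follows exactly the paper's own route: part (a) is Theorem~\ref{thm:MSD}\eqref{moduli} (with all surfaces equal to the K3 surface $S$, which has $q(S)=0$) combined with Proposition~\ref{prop:voisin}, and part (b) is the observation of Remark~\ref{R:modulicubic} that \cite[Conj.~0.3]{SY} gives dominance of $p_M$ so the construction of Theorem~\ref{thm:MSD}\eqref{fano} applies, again upgraded via Proposition~\ref{prop:voisin}. No gaps.
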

\begin{proof}
	Case (a) is the combination of Theorem~\ref{thm:MSD}\eqref{moduli} (where a motivic surface decomposition is obtained in terms of a K3 surface) and Proposition~\ref{prop:voisin}. Case (b) was outlined in Remark~\ref{R:modulicubic} (see also Remark~\ref{R:anysurface}).
\end{proof}

\bibliographystyle{amsalpha}
\bibliography{bib}

\end{document}